\theoremstyle{definition}
\newtheorem{dfn}{Definition}[section]
\newtheorem{thm}[dfn]{Theorem}
\newtheorem{lem}[dfn]{Lemma}
\newtheorem{cor}[dfn]{Corollary}
\newtheorem{prop}[dfn]{Proposition}
\theoremstyle{remark}
\newtheorem{rem}[dfn]{Remark}
\newcommand{\lrangle}[1]{\langle #1 \rangle}
\renewcommand{\Im}{\mathrm{Im}}
\newcommand{\Newt}{\mathrm{Newt}}
\newcommand{\conv}{\mathrm{conv}}
\newcommand{\norm}[1]{\left\lVert #1 \right\rVert}
\newcommand{\Aff}{\mathrm{Aff}}
\newcommand{\id}{\mathrm{id}}
\newcommand{\matac}[3]{\begin{pmatrix} #1 & #2 & #3 \end{pmatrix}}
\newcommand{\matca}[3]{\begin{pmatrix} #1 \\ #2 \\ #3 \end{pmatrix}}
\newcommand{\Txpm}{\mathbb T[\bm x^{\pm}]}
\newcommand{\Txpmf}{\mathbb T[\bm x^{\pm}]_{\mathrm{fcn}}}
\newcommand{\Typmf}{\mathbb T[\bm y^{\pm}]_{\mathrm{fcn}}}
\begin{document}

\title{Non-finite generatedness of the congruences defined by tropical varieties}
\author{Takaaki Ito}
\address{Academic Support Center, Kogakuin University, 2665-1 Nakano-machi, Hachioji-shi, Tokyo, 192-0015, Japan}
\email{kt13764@ns.kogakuin.ac.jp}
\keywords{tropical geometry, tropical algebra, max-plus algebra, congruences}
\subjclass[2020]{Primary 14T10, Secondary 15A80, 16Y60.}

\begin{abstract}
  In tropical geometry, there are several important classes of ideals and congruences such as tropical ideals, bend congruences, and the congruences of the form $\mathbf E(Z)$.
  Although they are analogues of the concept of ideals of rings, it is not well known whether they are finitely generated.
  In this paper, we study whether the congruences of the form $\mathbf E(Z)$ are finitely generated.
  In particular, we show that when $Z$ is the support of a tropical variety, $\mathbf E(Z)$ is not finitely generated except for a few specific cases.
  In addition, we give an explicit minimal generating set of $\mathbf E(|L|)$ for the tropical standard line $L$.
\end{abstract}

\maketitle

\section{Introduction}

Tropical geometry is a subject of mathematics developed in recent decades, which studies geometric objects called tropical varieties.
Sometimes tropical geometry is called \textquotedblleft the algebraic geometry over $\mathbb T$\textquotedblright, where $\mathbb T$ is a semifield called the tropical semifield.
Since $\mathbb T$ is not a field, tropical geometry is not included in classical algebraic geometry.
However, for many albedo-geometric theories, their tropical versions are studied.
For instance, the tropical intersection theory \cite{allermann2010first,jensen2013computing,jensen2016stable,katz2012tropical,rau2016intersections}, the theory of divisors on tropical curves \cite{amini2013reduced,an2014canonical,haase2012linear,hladky2013rank,len2014brill}, and the theory of tropical homology \cite{amini2020hodge,gross2019sheaf,itenberg2019tropical,ruddat2021homology} have been developed.

One of the basic objects in algebraic geometry is the ideals of rings.
In tropical geometry, there are several analogous objects.
The simplest one is the concept of ideal of a semiring, which is defined as a submodule of the semiring itself.
Another one is the concept of tropical ideal, which is defined by Maclagan and Rinc\'{o}n in \cite{maclagan2018tropical}.
It is an analogue of an ideal of a ring since the quotient semiring $R/E$ is defined.
Moreover, there are several specific class of congruences.
In \cite{giansiracusa2016equations}, J. Giansiracusa and N. Giansiracusa define \textit{bend congruences} to develop the theory of scheme theoretic tropicalization.
Another important class is the congruences of the form $\mathbf{E}(Z)$ for some $Z \subset \mathbb R^n$.
Here,
$$\mathbf{E}(Z) = \{ (f,g) \in \mathbb T[x_1, \ldots, x_n] \ | \ f|_Z = g|_Z \}.$$
The semiring $\mathbb T[x_1, \ldots, x_n]$ is sometimes replaced by another one such as the Laurent polynomial semiring $\mathbb T[x_1^{\pm}, \ldots, x_n^{\pm}]$, the rational function semifield $\mathbb T(x_1, \ldots, x_n)$, the polynomial ring $\mathbb B[x_1, \ldots, x_n]$ over the Boolean semifield, or the polynomial function semiring $\mathbb T[x_1, \ldots, x_n]_{\mathrm{fcn}}$.
This is a simple analogue of the concept of vanishing ideal in algebraic geometry.
The congruences of this form are studied by Bertram and Easton in \cite{bertram2017tropical}, Jo\'{o} and Mincheva in \cite{joo2018prime}, Song in \cite{song2024congruences}, and the author in \cite{ito2022local}.
Also, \textit{prime congruence} are defined in \cite{joo2018prime}, which are of course analogues of the prime ideals of rings.

Almost all rings appeared in classical algebraic geometry are Noetherian, and their ideals are finitely generated.
This fact is used to develop the dimension theory.
Hence it is natural that we consider whether a given ideal (or congruence) is finitely generated.
There are few known results.
In \cite[Example 3.10]{maclagan2018tropical}, an example of tropical ideal which is not finitely generated is given.
Also, it holds by definition that a bend relation defined by a polynomial is finitely generated.
In this paper, we consider the congruences on $\mathbb T[x_1^{\pm}, \ldots, x_n^{\pm}]$ of the form $\mathbf E(Z)$.
In particular, we consider the case that $Z$ is the support of a tropical variety.
The following is our main result.

\begin{thm}
  \label{main1}
  Let $X$ be a tropical variety in $\mathbb R^n$.
  Then $\mathbf E(|X|)$ is finitely generated if and only if $|X|$ is a rational affine linear subspace of $\mathbb R^n$.
\end{thm}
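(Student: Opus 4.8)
The plan is to treat the two implications separately, dispatching the ``if'' direction by a normalization argument and devoting the bulk of the work to the ``only if'' direction, which I expect to be the main obstacle.

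For the \textbf{if} direction, suppose $|X| = V$ is a rational affine linear subspace of dimension $d$. I would first record that $\mathbf E(Z)$ is equivariant for the automorphisms of $\Txpm$ induced by $\mathrm{GL}_n(\mathbb Z) \ltimes \mathbb R^n$ (invertible monomial substitutions together with additive shifts of the variables): such an automorphism $\varphi$, with associated affine map $\psi$ of $\mathbb R^n$, carries $\mathbf E(Z)$ to $\mathbf E(\psi(Z))$. Using this I normalize $V$ to the coordinate subspace $\{x_{d+1} = \cdots = x_n = \mathbf 1\}$, where $\mathbf 1$ denotes the tropical multiplicative unit. On $V$ each monomial $x_i$ with $i>d$ restricts to the constant $\mathbf 1$, so the finitely many pairs $(x_i,\mathbf 1)$ and $(x_i^{-1},\mathbf 1)$ lie in $\mathbf E(V)$; modulo these the restriction map factors through $\mathbb T[x_1^{\pm},\dots,x_d^{\pm}]$, whose kernel onto the associated function semiring is $\mathbf E(\mathbb R^d)$. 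It then remains to invoke the finite generation of $\mathbf E(\mathbb R^d)$, the congruence identifying two Laurent polynomials with the same piecewise-linear function: every such relation amounts to deleting lattice points lying strictly below the upper hull, and all such deletions follow, after translation by monomials, from the finitely many elementary concavity relations supported on unit simplices. Combining the two finite families yields a finite generating set of $\mathbf E(V)$.

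For the \textbf{only if} direction I argue by contraposition: assuming $|X|$ is not a rational affine linear subspace, I show $\mathbf E(|X|)$ is not finitely generated. Every element of $\mathbf E(|X|)$ is a pair of Laurent polynomials, hence of bounded support, so setting $E_m := \langle\, (f,g)\in\mathbf E(|X|) : \Newt(f),\Newt(g)\subseteq[-m,m]^n \,\rangle$ gives an increasing chain of congruences with $\bigcup_m E_m = \mathbf E(|X|)$. If $\mathbf E(|X|)$ were finitely generated, its generators would all lie in some $E_m$, forcing $E_m=\mathbf E(|X|)$; thus it suffices to produce, for every $m$, a pair in $\mathbf E(|X|)\setminus E_m$. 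Since $X$ is a rational balanced polyhedral complex that is not a single flat, it contains a point where two maximal cells with distinct linear spans meet along a ridge, a genuine ``bend''. After normalizing by the symmetry group I would reduce to a local two-dimensional model of this bend, directly generalizing the tropical standard line $L$ whose minimal generating set is computed in the paper, and there build a family of pairs $(f_N,g_N)\in\mathbf E(|X|)$ in which a monomial of exponent $\sim N\bm w$ (with $\bm w$ pointing into one cell) is active on that cell but dominated on the other, so that the equality $f_N|_{|X|}=g_N|_{|X|}$ is forced only through the ridge at scale $N$.

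The \textbf{main obstacle}, and the technical heart of the argument, is proving $(f_N,g_N)\notin E_m$ once $N$ is large relative to $m$. The intuition is that an elementary derivation step from a support-$\le m$ generator only alters a polynomial by $\oplus$-ing or $\odot$-ing with terms whose exponents move by at most $m$, so no sequence of such steps can create or cancel the essential high-exponent interaction localized at the ridge. To make this rigorous I would construct, for each $m$, an auxiliary congruence $E'\supseteq E_m$ (equivalently a semiring quotient) that detects the leading behaviour at infinity of Laurent polynomials along $\bm w$ near the ridge, a tropical analogue of a leading-term functional; I would then verify that every support-$\le m$ relation of $\mathbf E(|X|)$ is respected by $E'$ while $(f_N,g_N)$ is not, for $N$ exceeding the bound coming from $m$ and the combinatorics of the bend. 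This is exactly the step that collapses when $V$ is a flat, where no ridge exists and the chain $E_m$ stabilizes, and the explicit minimal generating set for $L$ furnishes the prototype for the invariant $E'$ in the general case.
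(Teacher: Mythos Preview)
Your ``if'' direction is essentially the paper's argument, though you add an unnecessary complication: since $\mathbf E(Z)$ is defined on the \emph{function} semiring $\Txpmf$, the congruence $\mathbf E(\mathbb R^d)$ is already the diagonal (two functions agreeing on all of $\mathbb R^d$ are equal), so no ``elementary concavity relations'' are needed. After your normalization the pairs $(\overline{x_i},\overline 0)$ for $i>d$ alone generate, which is exactly what the paper proves in Section~5.1 using a basis of the orthogonal-complement lattice rather than a $\mathrm{GL}_n(\mathbb Z)$ change of coordinates.

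Your ``only if'' direction has a genuine gap. You correctly isolate the crux---showing $(f_N,g_N)\notin E_m$---but offer only a promise to build some auxiliary congruence $E'$ detecting ``leading behaviour at infinity along $\bm w$ near the ridge,'' with no construction and no indication of why the generators of $E_m$ would respect it. Your supporting intuition is also off: in an elementary derivation step the coefficients $r_{\bm m,\bm n}$ are \emph{arbitrary} elements of the semiring, so exponents do not ``move by at most $m$'' but can be translated freely. What actually survives a step is a \emph{containment} constraint: whenever $(h_1,h_2)$ is a nontrivial consequence of generators $(f_i,g_i)$, some $\Newt(f_i)$ or $\Newt(g_i)$ must sit inside an integer translate of $\Newt(h_1)$ (the paper's Lemma~3.2). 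The paper then manufactures, for every $N$, an $n$-dimensional lattice polytope all of whose lattice points are pairwise at distance $>N$ (an explicit unimodular image of $[0,2]^n$), and realizes it as $\Newt(h_1)$ for a nontrivial pair $(h_1,h_2)\in\mathbf E(Z)$ using only that $Z$ is closed and omits a point. No local analysis of a bend is needed.

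The paper's route is therefore structurally different from yours. Balancing is used not to locate a ridge but to prove that $|X|$, after reducing the ambient dimension, is \emph{unbounded in every rational direction} (Proposition~5.1); this is what rules out monomial generators (Lemma~3.3) and makes the Newton-polytope mechanism apply to any proper closed subset with that property (Theorem~1.2), not just supports of tropical varieties. Your bend-localized construction would at best reproduce the standard-line computation of Section~6 and gives no clear path to higher-dimensional ridges or to varieties of dimension $<n-1$.
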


Thus, for almost all tropical variety $X$, $\mathbf E(X)$ is not finitely generated.
The only if part of this theorem is a consequence of the following more general result.

\begin{thm}
  \label{main2}
  Assume that $n \geq 2$.
  Let $Z$ be a proper closed subset of $\mathbb R^n$ that is unbounded in all rational directions.
  Then $\mathbf E(Z)$ is not finitely generated.
\end{thm}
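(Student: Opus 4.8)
The plan is to argue by contradiction, extracting from a hypothetical finite generating set a geometric constraint that the pairs of $\mathbf E(Z)$ cannot all satisfy. For a pair $(f,g)$ of Laurent polynomials write its \emph{difference locus} $S(f,g)=\{x\in\mathbb R^n : f(x)\neq g(x)\}$, so that $(f,g)\in\mathbf E(Z)$ exactly when $S(f,g)\subseteq\mathbb R^n\setminus Z$. The first step is a \textbf{support-bound lemma}. Recall that the congruence $\langle G\rangle$ generated by a set $G$ of pairs consists of the $(a,b)$ joined by a finite chain $a=c_0,\dots,c_k=b$ in which each consecutive pair has the form $c_j=\kappa(u)$, $c_{j+1}=\kappa(v)$ for some $(u,v)\in G\cup G^{-1}$ and some unary tropical-polynomial context $\kappa(t)=\bigoplus_\alpha r_\alpha\odot t^{\odot m_\alpha}$. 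Since $\kappa(h)(x)=\max_\alpha\bigl(r_\alpha(x)+m_\alpha h(x)\bigr)$ depends on $h$ only through the value $h(x)$, at every point where $u(x)=v(x)$ we get $\kappa(u)(x)=\kappa(v)(x)$, whence
\[ S(\kappa(u),\kappa(v))\subseteq S(u,v). \]
Chaining this with $S(a,b)\subseteq\bigcup_j S(c_j,c_{j+1})$ yields the key fact: if $\mathbf E(Z)=\langle G\rangle$ for a finite $G=\{(f_i,g_i)\}_{i=1}^N$, then every $(a,b)\in\mathbf E(Z)$ satisfies $S(a,b)\subseteq U_G:=\bigcup_i S(f_i,g_i)$, a single fixed subset of $\mathbb R^n\setminus Z$. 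The robustness of this bound under \emph{arbitrary} contexts $\kappa$ is precisely because $\kappa$ acts valuewise; this is what makes the difference locus, rather than degree or slope, the right invariant to track.

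Second, I would produce a rich supply of relations whose difference loci probe all of $\mathbb R^n\setminus Z$, thereby reducing the theorem to a covering statement about $U_G$. Fix a point $p\notin Z$; since $Z$ is closed there is a cube $\{\norm{x-p}_\infty\le\rho\}\subseteq\mathbb R^n\setminus Z$. The function $a=\norm{x-p}_\infty=\bigoplus_{i=1}^n\bigl((-p_i)\odot\mathbf x^{e_i}\oplus p_i\odot\mathbf x^{-e_i}\bigr)$ is a tropical Laurent polynomial, and for any $0<c<\rho$ the pair $(a,\ a\oplus c)$ lies in $\mathbf E(Z)$, with difference locus the open cube $\{\norm{x-p}_\infty<c\}\ni p$ (on $Z$ one has $a>\rho>c$, so the two sides agree there). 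Letting $p$ and $c$ vary shows $\bigcup_{(a,b)\in\mathbf E(Z)}S(a,b)=\mathrm{int}(\mathbb R^n\setminus Z)$, with admissible difference loci realized as arbitrarily small cubes around every interior point. Combined with the lemma, non-finite generation now follows once I establish that no admissible $U_G$ can contain all of these cubes, i.e.\ that $\mathrm{int}(\mathbb R^n\setminus Z)$ is never swallowed by a finite union of difference loci of pairs in $\mathbf E(Z)$.

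The main obstacle is exactly this covering step, and it is genuinely geometric. Each $S(f_i,g_i)$ is a polyhedral set — a finite union of cells on which two convex piecewise-linear functions differ — so $U_G$ is polyhedral and uses only finitely many (rational) facet directions. When the boundary of $Z$ is \emph{curved} (carries supporting directions outside any prescribed finite set, as forced when $\mathbb R^n\setminus Z$ is bounded by a strictly convex piece), one can hug $\partial Z$ by cubes lying in $\mathrm{int}(\mathbb R^n\setminus Z)$ at points where matching the local geometry would require a facet direction absent from $U_G$, producing an admissible cube that escapes $U_G$; this already settles the round cases. The hard residual case is when $\partial Z$ is itself polyhedral (for instance $Z$ the complement of an open box), for then a single ``cap'' relation can have difference locus equal to the whole polyhedral hole and the raw support bound becomes inconclusive. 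To handle this I would refine the invariant: attach to each difference cell the finite family of affine-linear functions realized by $a$ and $b$ on it, and prove a context-stable version of the lemma showing this enriched datum is still inherited from the generators. The delicate point, and the heart of the proof, is that although a context $\kappa$ may scale slopes without bound via the powers $t^{\odot m_\alpha}$, it cannot manufacture a new \emph{relative} configuration of linear pieces over a cell on which $u=v$; isolating an invariant that captures this and is provably reproduced only from finitely many generator-configurations is where the real work lies.

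Finally, I would record how the hypotheses enter. Unboundedness of $Z$ in all rational directions guarantees that $Z$ meets every rational half-space unboundedly, so that the convex hull of $Z$ is all of $\mathbb R^n$; this both forces distinct monomials to remain inequivalent in $\mathbf E(Z)$ and prevents $\mathbb R^n\setminus Z$ from containing a rational ray, pinning the difference loci against $\partial Z$ where the escaping cubes are built. The assumption $n\ge 2$ provides the independent directions needed for this construction: in dimension one a proper closed set unbounded in both rational directions can have $\mathbf E(Z)$ finitely generated, so the argument must, and does, use the extra room. The residual affine case dovetails with Theorem~\ref{main1}: a rational affine linear subspace is the unique fully ``flat'' geometry for which the refined invariant collapses and $\mathbf E(Z)$ becomes finitely generated, which is consistent with excluding exactly that case there.
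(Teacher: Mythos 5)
Your first two steps are correct, and the support-bound lemma is genuinely valid: since the operations of $\Txpmf$ are pointwise, a unary context $\kappa$ satisfies $S(\kappa(u),\kappa(v))\subseteq S(u,v)$, and chaining gives that a finite generating set $G$ would force $S(a,b)\subseteq U_G$ for every $(a,b)\in\mathbf E(Z)$; your cube pairs $(a,a\oplus c)$ are also genuinely in $\mathbf E(Z)$ and force $U_G=\mathbb R^n\setminus Z$. The gap is the step you yourself call the heart of the proof: that no finite family of pairs in $\mathbf E(Z)$ can have difference loci covering $\mathbb R^n\setminus Z$. That statement is \emph{false} in exactly the cases the theorem is about. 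Take $Z=|L|$, the standard tropical line in $\mathbb R^2$ (closed, proper, and unbounded in all rational directions). The three pairs $\left(\overline{x\oplus y},\,\overline{0\oplus x\oplus y}\right)$, $\left(\overline{0\oplus x},\,\overline{0\oplus x\oplus y}\right)$, $\left(\overline{0\oplus y},\,\overline{0\oplus x\oplus y}\right)$ all lie in $\mathbf E(|L|)$, and their difference loci are exactly the three open sectors $\{x<0,\ y<0\}$, $\{y>\max(0,x)\}$, $\{x>\max(0,y)\}$, whose union is all of $\mathbb R^2\setminus|L|$. So the necessary condition your invariant extracts is satisfiable by three pairs whenever $Z$ is polyhedral, i.e.\ for every tropical variety, and no contradiction can come from it. The ``refined invariant'' you defer to (affine-linear configurations over cells, stable under contexts) is precisely the missing proof, and the proposal does not say what it is or why it would be inherited from finitely many generators. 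A symptom of the same issue: your closing claim that unboundedness in all rational directions prevents $\mathbb R^n\setminus Z$ from containing a rational ray is also false, as each sector above contains rational rays.

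The paper's proof succeeds because it tracks a lattice-geometric invariant that the difference locus cannot see: Newton polytopes. From the Bertram--Easton description of generated congruences (Lemma \ref{JM gen}), any pair $(h_1,h_2)$ in a finitely generated congruence with $h_1\neq h_2$ forces some generator entry's Newton polytope into an integer translate of $\Newt(h_1)$ (Lemma \ref{newt in newt 2}). Unboundedness in all rational directions is used to discard $-\infty$ and monomial generators (Lemma \ref{ubd2}), so each surviving generator's Newton polytope has two lattice points at distance at most some fixed bound $N$; then, after translating so that $\mathbf 0\notin Z$ (Lemma \ref{translation}), closedness of $Z$ yields a pair $(h_1,h_2)\in\mathbf E(Z)$ with $h_1\neq h_2$ whose Newton polytope is the ``thin'' polytope of Corollary \ref{slender} (this is where $n\geq 2$ enters), all of whose lattice points are pairwise farther apart than $N$ (Lemma \ref{any polytope}) --- a contradiction. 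Any repair of your approach would need an invariant at least this fine; the difference locus alone is provably too coarse.
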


The strategy of the proof of Theorem \ref{main2} is as follows.
First we assume that $\mathbf E(Z)$ is finitely generated and fix a generating set.
Then we construct a pair $(f,g) \in \mathbf E(Z)$ such that $f \neq g$ and the distance of any two lattice points of the Newton polytope of $f$ is sufficiently large.
The existence of such pair causes a contradiction.
See Section 3 for detail.

Note that we cannot apply Theorem \ref{main2} for showing the only if part of Theorem \ref{main1} directly because $|X|$ is not unbounded in all rational directions in general.
In particular, if $|X|$ is included in a proper rational affine linear subspace of $\mathbb R^n$, then $|X|$ is not unbounded in all rational directions.
To overcome this issue, we show that we may replace the ambient space with a lower dimensional one.
See Section 4 for detail.

Finally, we give an example of minimal generating set of $\mathbf E(L)$ for the standard tropical line $L \subset \mathbb R^2$.
See Section \ref{section min gen} for detail.

\begin{thm}
  \label{main3}
  The set
  $$\mathcal S = \left\{ \left( \overline{0 \oplus x^u y^v}, f_{\Delta(\{ (0,0), (u,v) \})} \right) \ \middle| \ \begin{aligned}
  &\text{$u$ and $v$ are integers with $\gcd(u,v) = 1$,}\\
  &\text{ $u > 0$ or $(u,v) = (0,1)$}
  \end{aligned} \right\}$$
  is a minimal generating set of $\mathbf E(L)$.
\end{thm}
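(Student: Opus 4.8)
The plan is to prove the two assertions --- that $\mathcal S$ generates $\mathbf{E}(L)$, and that it is minimal --- separately, with generation being the substantial part. First I would record a concrete description of $\mathbf{E}(L)$. Writing $L = \rho_1 \cup \rho_2 \cup \rho_3$ as the union of its three rays, emanating from the origin in the primitive directions $(1,1)$, $(-1,0)$, $(0,-1)$, a pair $(f,g)$ lies in $\mathbf{E}(L)$ if and only if $f$ and $g$ restrict to the same convex piecewise-linear function on each ray $\rho_i = \mathbb{R}_{\geq 0} w_i$. On $\rho_i$ the monomial $\bm x^{\bm a}$ restricts to $t \mapsto \langle \bm a, w_i\rangle\, t$, so $f|_{\rho_i}$ is determined by the slopes $\langle \bm a, w_i \rangle$ and the coefficients of the monomials of $f$. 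The strategy for generation is then to produce, for every polynomial $f$, a canonical normal form $N(f)$ depending only on the function $f|_L$, together with a reduction $f \equiv N(f)$ modulo $\langle \mathcal S\rangle$ using only the generators and the semiring operations. Granting this, any $(f,g)\in\mathbf{E}(L)$ satisfies $N(f)=N(g)$, whence $f\equiv g$ and $\mathbf{E}(L)\subseteq\langle\mathcal S\rangle$; the reverse inclusion is immediate once one checks each generator $(\overline{0\oplus x^uy^v},\, f_{\Delta(\{(0,0),(u,v)\})})$ indeed lies in $\mathbf{E}(L)$.

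The core of this step is a reduction lemma, which I would prove by induction on a complexity measure of $f$ (for instance the number of lattice points of $\Newt(f)$ that are inessential for $f|_L$, or an area-type quantity). At each stage I locate a monomial of $f$ together with an adjacent one so that, after factoring out a common monomial $\bm x^{\bm c}$, the relevant two-term subexpression has the form $0\oplus x^uy^v$ with $(u,v)$ primitive; the generator indexed by $(u,v)$ --- or by $-(u,v)$, handled by the normalization $u>0$ or $(u,v)=(0,1)$ --- then rewrites this piece and strictly decreases the complexity. This is precisely why $\mathcal S$ must range over all primitive directions: the differences of exponent vectors occurring in an arbitrary $f$ point in arbitrary primitive directions, and each must be available in order to straighten the corresponding bend. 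Termination follows from the strict decrease, and the limit is the normal form $N(f)$, which I would take to be the $L$-saturated representative, namely the unique polynomial with restriction $f|_L$ whose support is the full set of essential exponents.

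For minimality I must show that, for each admissible $(u,v)$, the generator $g_{(u,v)}\in\mathcal S$ does not lie in the congruence generated by $\mathcal S\setminus\{g_{(u,v)}\}$. The plan is to exhibit for each such $(u,v)$ a congruence $\theta_{(u,v)}$ --- equivalently a quotient semiring and a homomorphism --- containing every other generator but not $g_{(u,v)}$. The natural device is a direction-detecting invariant: the two sides of $g_{(u,v)}$ differ by a bend whose primitive recession direction is exactly $(u,v)$, whereas every generator $g_{(u',v')}$ with $(u',v')\neq(u,v)$ affects only bends in direction $(u',v')$. Tracking, on a suitable ray or at the vertex, the lattice direction of the single bend that a relation introduces yields an invariant preserved by all the other generators but violated by $g_{(u,v)}$, thereby separating $g_{(u,v)}$ from $\langle\mathcal S\setminus\{g_{(u,v)}\}\rangle$.

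I expect the reduction lemma of the generation step to be the main obstacle. The subtlety is concentrated at the origin, where all three rays meet and where bends in several directions interact simultaneously; one must order the elimination of monomials --- say by a generic linear functional --- so that each move genuinely reduces complexity and no \emph{global} relation, one not decomposable into the prescribed primitive binomial moves, is ever required. Verifying that the single-bend generators in $\mathcal S$ suffice, with no higher relations needed, is the crux and the point at which the specific geometry of the standard tropical line enters.
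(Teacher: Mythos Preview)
Your generation strategy --- reduce every $f$ to a canonical representative modulo $\langle\mathcal S\rangle$, then observe that two functions agreeing on $L$ reduce to the same thing --- matches the paper's. The paper's canonical form is concrete: after stripping a monomial factor $\overline{ax^uy^v}$ recording $f(\mathbf 0)$ and the initial slopes on $\rho_1,\rho_2$, one reduces to a polynomial of \emph{standard form} $0\oplus\bigoplus a_{-i}x^{-i}\oplus\bigoplus b_{-j}y^{-j}\oplus\bigoplus c_kx^k$, shown to be unique given $f|_L$. The key reduction (your inductive step) is that $f_A\sim_{\langle\mathcal S\rangle}f_{\Delta(A)}$ for every finite $A\subset\mathbb Z^2$; the non-primitive case you flag is handled by writing $\overline{\bm x^{\bm u_1}\oplus\bm x^{\bm u_2}}=\overline{\bm x^{\bm u_2}}\odot(\overline{0\oplus x^uy^v})^{\odot g}$ when $\bm u_1-\bm u_2=g\cdot(u,v)$ with $\gcd(u,v)=1$. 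No induction on an area-type complexity is actually needed; the argument is direct once the standard form is in hand.

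Your minimality argument, however, diverges from the paper's and, as written, has a gap. You propose a ``direction-detecting invariant'' separating $g_{(u,v)}$ from $\langle\mathcal S\setminus\{g_{(u,v)}\}\rangle$, but you never say what it is, and the informal description (tracking the direction of a single bend) does not obviously survive composition: applying a generator $g_{(u',v')}$ replaces a two-term expression by a triangle-supported one, and subsequent semiring operations mix directions freely, so it is unclear what congruence $\theta_{(u,v)}$ you would actually write down. The paper instead reuses the Newton-polytope containment machinery from Section~3 (Remark~\ref{newt in newt infty}, the infinite-generating-set version of Lemma~\ref{newt in newt 2}): if $g_{(u,v)}\in\langle\mathcal S'\rangle$ with $\mathcal S'=\mathcal S\setminus\{g_{(u,v)}\}$, then some $(f,g)\in\mathcal S'$ must have $\Newt(f)$ or $\Newt(g)$ contained in an integer translate of $\Newt(\overline{0\oplus x^uy^v})=\conv\{(0,0),(u,v)\}$. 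Since $\gcd(u,v)=1$, the only lattice polytopes inside this segment are its two endpoints and the segment itself; but every other generator has first component a primitive segment in a \emph{different} direction and second component a two-dimensional triangle $\Delta$, so neither fits. This is both simpler than constructing a bespoke quotient and ties the minimality statement to the same lemma that drives the non-finite-generation results elsewhere in the paper. If you want to salvage your approach, the Newton polytope \emph{is} the direction-detecting invariant you are reaching for.
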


This paper is organized as follows.
In Section 2, we recall some basic definitions and facts about semirings, tropical algebra, tropical varieties, and Newton polytopes.
Also we show some lemmas we need later.
In Section 3, we consider a way to prove that a given congruence on $\mathbb T[x_1^{\pm}, \ldots, x_n^{\pm}]$ is not finitely generated, and use it to show Theorem \ref{main2}.
In Section 4, we discuss about the reduction of the ambient spaces.
The result of this section is necessary for showing Theorem \ref{main1}.
In Section 5, we prove Theorem \ref{main1}.
In Section 6, we give an example of minimal generating set of $\mathbf E(L)$ for the standard tropical line $L$.
This section is almost independent of Section 3-5.
In Appendix, we give proofs of some facts about free abelian groups and polytopes, which are used in this paper.

\section{Preliminary}

\subsection{Semirings}

A \textit{semiring} is a set $R$ endowed with two binary operations $+, \cdot$ called the \textit{addition} and the \textit{multiplication} respectively and a fixed element $0 \in R$ such that $(R, +, 0)$ is a commutative monoid, $(R, \cdot)$ is a semigroup, the multiplication is distributive over the addition, and $a \cdot 0 = 0 \cdot a = 0$ for any $a \in R$.
A semiring $R$ is \textit{unitary} if there exists $1 \in R$ such that $a \cdot 1 = 1 \cdot a = a$ for any $a \in R$.
A semiring $R$ is \textit{commutative} if $a \cdot b = b \cdot a$ for any $a,b \in R$.
In this paper, we assume that any semiring is unitary and commutative.
We often denote $ab$ instead of $a \cdot b$.
A semiring $R$ is called \textit{semifield} if $0 \neq 1$ and for any $a \in R \setminus \{ 0 \}$, there exists $b \in R$ such that $ab = 1$.
A \textit{subsemiring} of $R$ is a subset $S$ such that $0,1 \in S$ and, for any $a,b \in S$, $a+b \in S$ and $ab \in S$.

A \textit{congruence} on $R$ is an equivalence relation $E \subset R \times R$ on $R$ such that, if $(a,b), (c,d) \in E$, then $(a+c, b+d), (ac, bd) \in E$.
For a congruence $E$ on $R$, the \textit{quotient semiring} $R/E$ is naturally defined.
For a subset $S \subset R \times R$ there exists the minimum congruence on $R$ including $S$, which is called the congruence \textit{generated by} $S$, and denoted by $\lrangle{S}$.
If $S$ is a finite set, say $S = \{ (a_1,b_1), \ldots, (a_r, b_r) \}$, we denote
$$\lrangle{S} =\lrangle{ (a_1,b_1), \ldots, (a_r, b_r) }.$$
If a congruence is generated by a finite set, we say that the congruence is \textit{finitely generated}.

Let $R_1, R_2$ be semirings.
A map $\varphi : R_1 \to R_2$ is called a \textit{homomorphism} if $\varphi(a+b) = \varphi(a) + \varphi(b)$ for any $a,b \in R_1$, $\varphi(ab) = \varphi(a) \varphi(b)$ for any $a,b \in R_1$, $\varphi(0) = 0$, and $\varphi(1) = 1$.
The set $\Im(\varphi):=\{ \varphi(a) \in R_2 \ | \ a \in R_1 \}$ is called the \textit{image} of $\varphi$, which is a subsemiring of $R_2$.
A homomorphism $\varphi : R_1 \to R_2$ is called an \textit{isomorphism} if there exists a homomorphism $\psi : R_2 \to R_1$ such that $\psi \circ \varphi = \id_{R_1}$ and $\varphi \circ \psi = \id_{R_2}$.
We say that $R_1$ and $R_2$ are \textit{isomorphic} if there exists an isomorphism $\varphi : R_1 \to R_2$.

\subsection{Tropical algebra}
\label{pre trop}

The \textit{tropical semifield} $\mathbb T$ is the set $\mathbb R \cup \{ -\infty \}$ endowed with the addition $a \oplus b = \max \{ a,b \}$ and the multiplication $a \odot b = a+b$, which forms a semifield.
The \textit{Laurent tropical polynomial semiring} $\Txpm = \mathbb T[x_1^{\pm}, \ldots, x_n^{\pm}]$ is naturally defined.
We assume that $\Txpm$ always has $n$ variables unless otherwise mentioned.
Each tropical Laurent polynomial in $\Txpm$ defines a $\mathbb T$-valued function on $\mathbb R^n$.
We denote $\overline{P}$ the function defined by $P \in \Txpm$.
Also we denote $\Txpmf$ the set of functions defined by some $P \in \Txpm$, which forms a semiring.

We consider the pull-backs and translations of functions.

\begin{lem}
  \label{pull-buck}
  Let $\mu: \mathbb R^n \to \mathbb R^m$ be a linear map defined by an integer matrix.
  Then the \textit{pull-back} map
  $$\mu^* : \mathbb T[y_1^{\pm}, \ldots, y_m^{\pm}]_{\mathrm{fcn}} \to \mathbb T[x_1^{\pm}, \ldots, x_n^{\pm}]_{\mathrm{fcn}}, \qquad f \mapsto f \circ \mu$$
  is well-defined.
  Namely, for any $f \in \mathbb T[y_1^{\pm}, \ldots, y_m^{\pm}]_{\mathrm{fcn}}$, the function $f \circ \mu$ is always defined by some tropical Laurent polynomial.
  Moreover, $\mu^*$ is a semiring homomorphism.
\end{lem}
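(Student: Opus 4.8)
The plan is to reduce everything to the observation that the elements of $\mathbb T[y_1^{\pm}, \ldots, y_m^{\pm}]_{\mathrm{fcn}}$ and $\mathbb T[x_1^{\pm}, \ldots, x_n^{\pm}]_{\mathrm{fcn}}$ are honest $\mathbb T$-valued functions, and that both tropical operations are pointwise: $(f \oplus g)(\bm b) = \max\{f(\bm b), g(\bm b)\}$ and $(f \odot g)(\bm b) = f(\bm b) + g(\bm b)$. Since precomposition with a fixed map respects any pointwise operation, the only genuine content of the lemma is well-definedness, namely that $f \circ \mu$ again lies in $\mathbb T[x_1^{\pm}, \ldots, x_n^{\pm}]_{\mathrm{fcn}}$, i.e.\ that it is defined by some tropical Laurent polynomial. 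Note that $\mu^*$ is unambiguous on functions, since $f \circ \mu$ is defined directly as a composition and does not depend on the choice of a polynomial representing $f$.

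For well-definedness I would write $\mu(\bm x) = A \bm x$ for the integer matrix $A \in \mathbb Z^{m \times n}$ defining $\mu$, and fix a tropical Laurent polynomial $P = \bigoplus_{u} c_u \odot \bm y^{u}$ representing $f$, where $u$ ranges over a finite subset of $\mathbb Z^m$ and $c_u \in \mathbb T$. Evaluating the pull-back at $\bm x \in \mathbb R^n$ gives $(f \circ \mu)(\bm x) = \overline P(A\bm x) = \max_u \bigl(c_u + \lrangle{u, A \bm x}\bigr) = \max_u \bigl(c_u + \lrangle{A^{\top} u, \bm x}\bigr)$. Because $A$ has integer entries and $u \in \mathbb Z^m$, each exponent $A^{\top} u$ lies in $\mathbb Z^n$, so $Q := \bigoplus_u c_u \odot \bm x^{A^{\top} u}$ is a genuine element of $\mathbb T[x_1^{\pm}, \ldots, x_n^{\pm}]$ satisfying $\overline Q = f \circ \mu$. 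Hence $f \circ \mu \in \mathbb T[x_1^{\pm}, \ldots, x_n^{\pm}]_{\mathrm{fcn}}$.

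Finally I would record the homomorphism axioms, all immediate from the pointwise description: for $\bm x \in \mathbb R^n$ one has $\mu^*(f \oplus g)(\bm x) = (f \oplus g)(\mu(\bm x)) = \max\{f(\mu(\bm x)), g(\mu(\bm x))\} = \bigl(\mu^* f \oplus \mu^* g\bigr)(\bm x)$, and likewise $\mu^*(f \odot g) = \mu^* f \odot \mu^* g$; the additive unit (the constant $-\infty$) and the multiplicative unit (the constant $0$) are each carried to themselves. The only real obstacle, and the sole place the hypotheses enter, is the integrality of the matrix: it is exactly what guarantees $A^{\top} u \in \mathbb Z^n$, so that $Q$ has integer exponents and is a genuine Laurent polynomial. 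Over a non-integer matrix the same computation would produce non-integer exponents and $f \circ \mu$ would in general fail to be a tropical polynomial function, so well-definedness would genuinely break.
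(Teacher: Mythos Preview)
Your proof is correct and follows essentially the same approach as the paper: both observe that precomposition with $\mu$ is automatically a homomorphism on the ambient function semiring, so the content lies in showing that Laurent polynomial functions pull back to Laurent polynomial functions, which comes down to the computation $\lrangle{u, A\bm x} = \lrangle{A^{\top}u, \bm x}$ with $A^{\top}u \in \mathbb Z^n$. The only cosmetic difference is that the paper reduces to the generating monomials $\overline{y_i}$ and checks that each pulls back to the single monomial $\bm x^{\bm a_i}$ (with $\bm a_i$ the $i$-th row of $A$), whereas you carry out the same computation for a general polynomial in one step; neither approach gains anything substantive over the other.
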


\begin{proof}
  The semiring $\Txpmf$ is a subsemiring of the semiring $\mathrm{Map}(\mathbb R^n, \mathbb T)$, the set of $\mathbb T$-valued functions on $\mathbb R^n$ endowed with the pointwise addition and the pointwise multiplication.
  The pull-back map $\mu^* : \Typmf \to \mathrm{Map(\mathbb R^n, \mathbb T)}$ is defined, and this is clearly a semiring homomorphism.
  Thus it suffices to show that $\Im(\mu^*) \subset \Typmf$.
  Moreover, it is enough to show that $\mu^*(\overline{x_i}) \in \Typmf$ for any $i$.

  Let $A$ be the integer matrix defining $\mu$ and $\bm a_i = \matac{a_{i1}}{\cdots}{a_{im}}$ be the $i$-th row of $A$.
  Then, for any $\bm p \in \mathbb R^m$, $\mu^*(\overline{x_i})(\bm p) = \overline{x_i}(A \bm p) = \bm a_i \bm p$.
  Thus the function $\mu^*(\overline{x_i})$ is defined by the monomial $\bm y^{\bm a_i} := y_1^{a_{i1}} \cdots y_m^{a_{im}}$.
  Hence $\mu^*(\overline{x_i}) \in \Typmf$.
\end{proof}

\begin{lem}
  \label{translation}
  Let $\bm a = \matca{a_1}{\vdots}{a_n} \in \mathbb R^n$ be a vector.
  Then the map
  $$\varphi : \Txpmf \to \Txpmf, \qquad f(x_1, \ldots, x_n) \mapsto f(x_1 \odot a_1, \ldots, x_n \odot a_n)$$
  is a semiring isomorphism.
\end{lem}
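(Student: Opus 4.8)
The plan is to exhibit an explicit inverse homomorphism and verify the homomorphism axioms directly, since the map $\varphi$ is simply a coordinate translation in the tropical sense. First I would note that $\varphi$ is induced by the substitution $x_i \mapsto x_i \odot a_i$, which in ordinary notation means shifting the argument: if $f = \overline{P}$ for a Laurent tropical polynomial $P$, then $\varphi(f)$ is the function $\bm p \mapsto f(\bm p + \bm a)$, where $\bm a = \matca{a_1}{\vdots}{a_n}$. The key observation is that this is a genuine well-defined map $\Txpmf \to \Txpmf$: translating the argument of a tropical Laurent polynomial function by a fixed vector again yields a tropical Laurent polynomial function, because substituting $x_i \odot a_i$ into a monomial $\bm x^{\bm u} = x_1^{u_1} \cdots x_n^{u_n}$ multiplies it by the constant $a_1 u_1 + \cdots + a_n u_n$, which is still a (constant times a) monomial.

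Next I would check that $\varphi$ is a semiring homomorphism. Since $\varphi$ acts by precomposition with the translation $\bm p \mapsto \bm p + \bm a$ of $\mathbb R^n$, and the operations on $\Txpmf$ are pointwise $\oplus$ and $\odot$, we have for any $f, g \in \Txpmf$ and any $\bm p \in \mathbb R^n$ that
\begin{align*}
\varphi(f \oplus g)(\bm p) &= (f \oplus g)(\bm p + \bm a) = f(\bm p + \bm a) \oplus g(\bm p + \bm a) = \bigl( \varphi(f) \oplus \varphi(g) \bigr)(\bm p),\\
\varphi(f \odot g)(\bm p) &= (f \odot g)(\bm p + \bm a) = f(\bm p + \bm a) \odot g(\bm p + \bm a) = \bigl( \varphi(f) \odot \varphi(g) \bigr)(\bm p).
\end{align*}
The same pointwise reasoning shows $\varphi$ fixes the constants $0 = \overline{-\infty}$ and $1 = \overline{0}$, since these are constant functions unaffected by translating the argument. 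Hence $\varphi$ is a homomorphism.

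Finally I would construct the inverse. Let $\psi : \Txpmf \to \Txpmf$ be the map induced by the substitution $x_i \mapsto x_i \odot (-a_i)$, i.e. precomposition with the translation $\bm p \mapsto \bm p - \bm a$. By the same argument as above, $\psi$ is a well-defined semiring homomorphism. Then for any $f$ and any $\bm p$,
$$(\psi \circ \varphi)(f)(\bm p) = \varphi(f)(\bm p - \bm a) = f((\bm p - \bm a) + \bm a) = f(\bm p),$$
so $\psi \circ \varphi = \id$, and symmetrically $\varphi \circ \psi = \id$. Therefore $\varphi$ is an isomorphism. I do not anticipate a genuine obstacle here; the only point requiring care is the well-definedness assertion, namely that the translated function still lies in $\Txpmf$. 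This could be phrased cleanly by invoking Lemma \ref{pull-buck} with a suitable construction, or simply by the direct monomial computation noted above, which shows the image of each generator $\overline{x_i}$ remains a tropical Laurent polynomial function.
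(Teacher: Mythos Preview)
Your proposal is correct and follows exactly the same approach as the paper: exhibit the inverse map $\psi$ given by the substitution $x_i \mapsto x_i \odot (-a_i)$, and observe that $\varphi$ is a homomorphism. The paper's proof is simply a terser version of yours, asserting ``clearly $\varphi$ is a semiring homomorphism'' and writing down $\psi$ without spelling out the pointwise verifications you give.
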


\begin{proof}
  Clearly $\varphi$ is a semiring homomorphism.
  The map
  $$\psi : \Txpmf \to \Txpmf, \qquad f(x_1, \ldots, x_n) \mapsto f(x_1 \odot (-a_1), \ldots, x_n \odot (-a_n))$$
  is the inverse map of $\varphi$.
  Hence $\varphi$ is an isomorphism.
\end{proof}

In the standard notation, the map in the previous lemma is $f(x_1, \ldots, x_n) \mapsto f(x_1 + a_1, \ldots, x_n + a_n)$.
Thus we call this map the \textit{translation map}.

The following congruences are the main objects of this paper.

\begin{dfn}
  For a subset $Z \subset \mathbb R^n$, we define the congruence $\mathbf E(Z)$ on $\Txpmf$ as
  $$\mathbf E(Z) = \{ (f,g) \ | \ f|_Z = g|_Z \}.$$
\end{dfn}

Note that, for any $\bm a \in \mathbb R^n$, $\mathbf E(Z)$ is finitely generated if and only if $\mathbf E(Z + \bm a)$ is finitely generated, where $Z + \bm a = \{ \bm p + \bm a \ | \ \bm p \in Z \}$.
This is because the translation map is an isomorphism.

\subsection{Tropical varieties}

An \textit{affine linear function} on $\mathbb R^n$ is a function of the form
$$f(x_1, \ldots, x_n) = a_1 x_1 + \cdots + a_n x_n  + b,$$
where $a_1, \ldots, a_n, b \in \mathbb R$ and $(a_1, \ldots, a_n) \neq (0,\ldots, 0)$.
A \textit{polyhedron} in $\mathbb R^n$ is a subset of the form
$$\left\{ \bm p \in \mathbb R^n \ \middle| \ \begin{aligned}
  f_1(\bm p) &\geq 0\\
  &\vdots \\
  f_m(\bm p) &\geq 0
\end{aligned} \right\},$$
where each $f_i$ is an affine linear function on $\mathbb R^n$.
A \textit{polytope} is a bounded polyhedron.

For a polyhedron $\sigma$ in $\mathbb R^n$, let $\Aff (\sigma)$ be the smallest affine linear subspace of $\mathbb R^n$ containing $\sigma$.
The \textit{dimension} $\dim(\sigma)$ of $\sigma$ is the dimension of $\Aff(\sigma)$.
We denote $L(\sigma)$ the linear subspace obtained by translating $\Aff(\sigma)$.

A subset $\tau$ of a polyhedron $\sigma$ is a \textit{face} of $\sigma$ if there exists an affine linear function $f$ such that, for any $\bm p \in \sigma$,
\begin{enumerate}[(1)]
  \item $f(\bm p) \geq 0$, and
  \item $f(\bm p) = 0$ if and only if $\bm p \in \tau$.
\end{enumerate}
Any face of a polyhedron is also a polyhedron.
If $\tau$ is a face of a polyhedron $\sigma$, we denote $\tau \preceq \sigma$.
We also denote $\tau \prec \sigma$ when $\tau \preceq \sigma$ and $\tau \neq \sigma$.
A \textit{polyhedral complex} in $\mathbb R^n$ is a collection $X$ of finitely many polyhedra in $\mathbb R^n$ such that
\begin{enumerate}[(1)]
  \item If $\sigma \in X$ and $\tau \prec \sigma$, then $\tau \in X$, and
  \item If $\sigma, \tau \in X$ and $\sigma \cap \tau \neq \emptyset$, then $\sigma \cap \tau$ is a common face of $\sigma$ and $\tau$. 
\end{enumerate}
In this paper, we assume that every polyhedral complex is \textit{connected}, namely, for any polyhedral complex $X$ and any $\sigma,\tau \in X$, we assume that there exists a sequence $\sigma = \sigma_0, \sigma_1, \ldots, \sigma_k = \tau$ of members of $X$ such that $\sigma_i \prec \sigma_{i+1}$ or $\sigma_i \succ \sigma_{i+1}$ for any $i = 1, \ldots, k-1$.
The \textit{support} of a polyhedral complex $X$ is the union of all members of $X$, which is denoted by $|X|$.
A \textit{dimension} of a polyhedral complex is the maximum dimension of its members.
For a $d$-dimensional polyhedral complex $X$ and $i=1, \ldots, d$, we denote $X_i$ the set of $i$-dimensional polyhedra in $X$.
A \textit{facet} of a polyhedral complex $X$ is a maximal polyhedron in $X$ with respect to inclusion.
A polyhedral complex is \textit{pure dimensional} if its facets have common dimension.
A \textit{weighted polyhedral complex} is a pair $(X, \omega_X)$ of a pure $d$-dimensional polyhedral complex $X$ and a function $\omega_X : X_d \to \mathbb Z_{>0}$.

An affine linear function $f$ is \textit{rational} if $f$ is of the form
$$f(x_1, \ldots, x_n) = a_1 x_1 + \cdots + a_n x_n  + b$$
for some $a_1, \ldots, a_n \in \mathbb Z$ and $b \in \mathbb R$.
A polyhedron is \textit{rational} if it is defined by rational affine linear functions.
A polyhedral complex is \textit{rational} if it consists of rational polyhedra.
For a rational polyhedron $\sigma \subset \mathbb R^n$, we denote $\Lambda(\sigma) = L(\sigma) \cap \mathbb Z^n$, which is a free abelian group of rank $\dim(\sigma)$.

We define a \textit{balanced polyhedral complex} in $\mathbb R^n$ as follows:
Let $(X, \omega_X)$ be a weighted pure $d$-dimensional rational polyhedral complex in $\mathbb R^n$.
For each facet $\sigma$ of $X$ and its $(d-1)$-dimensional face $\tau$, fix a vector $\bm u_{\sigma/\tau} \in \mathbb Z^n$ such that
\begin{enumerate}[ \ (1)]
  \item $\sigma \subset \tau + \mathbb R_{\geq 0} \bm u_{\sigma/\tau}$, and
  \item $\Lambda(\sigma) = \Lambda(\tau) + \mathbb Z \bm u_{\sigma/\tau},$
\end{enumerate}
where the sum in the first condition is the Minkowski sum.
Thus $(X, \omega_X)$ is \textit{balanced} if for any $(d-1)$-dimensional polyhedron $\tau$ in $X$,
$$\sum_{\sigma \succ \tau} \omega_X(\sigma) \bm u_{\sigma / \tau} \in \Lambda(\tau).$$

In this paper, we define a $d$-dimensional \textit{tropical variety} in $\mathbb R^n$ as a pure $d$-dimensional balanced rational polyhedral complex in $\mathbb R^n$.

The following lemma is well-known.

\begin{lem}
  \label{n in n}
  If $X$ is an $n$-dimensional tropical variety in $\mathbb R^n$, then $|X| = \mathbb R^n$.
\end{lem}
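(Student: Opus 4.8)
The plan is to show that $|X|$ is closed, that it locally fills up a neighbourhood of each of its points of codimension at most one, and then to conclude by a connectivity argument. First I would note that since $X$ is pure $n$-dimensional, each facet is an $n$-dimensional polyhedron in $\mathbb R^n$, hence full-dimensional, and $|X|$ is the union of these finitely many closed facets; in particular $|X|$ is closed and nonempty. For a facet $\sigma$ we have $\Aff(\sigma) = \mathbb R^n$, so its relative interior is its topological interior, an open subset of $\mathbb R^n$ contained in $|X|$; thus every relative-interior point of a facet already lies in $\mathrm{int}(|X|)$.

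The heart of the argument, and the step I expect to be the main obstacle, is to show that every point $\bm p$ in the relative interior of an $(n-1)$-dimensional face $\tau \in X_{n-1}$ also lies in $\mathrm{int}(|X|)$; this is where balancing enters. I would choose $\epsilon > 0$ so small that the ball $B = B(\bm p,\epsilon)$ meets only the polyhedra of $X$ containing $\bm p$, which (as $\bm p$ lies in the relative interior of $\tau$ and $X$ is pure) are exactly $\tau$ and the facets $\sigma \succ \tau$, so that $|X| \cap B = \bigcup_{\sigma \succ \tau}(\sigma \cap B)$. Writing $\ell$ for a primitive integer linear functional with $\ker\ell = L(\tau)$, which exists because $\tau$ is rational, the local structure of each facet $\sigma \succ \tau$ near $\bm p$ is the closed half-space bounded by $\Aff(\tau) = \{\ell = \ell(\bm p)\}$; using condition (1) in the definition of $\bm u_{\sigma/\tau}$ one checks that this half-space lies on the side of $\Aff(\tau)$ determined by the sign of $\ell(\bm u_{\sigma/\tau})$. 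Hence $|X| \cap B$ is a union of closed half-balls, and it equals all of $B$ precisely when both signs of $\ell(\bm u_{\sigma/\tau})$ occur.

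To force both signs to appear, I would apply $\ell$ to the balancing relation. Since $\Lambda(\sigma) = \Lambda(\tau) + \mathbb Z\,\bm u_{\sigma/\tau}$ has rank $n$ while $\Lambda(\tau)$ has rank $n-1$, each $\bm u_{\sigma/\tau} \notin L(\tau)$, so $\ell(\bm u_{\sigma/\tau}) \neq 0$. Balancing gives $\sum_{\sigma \succ \tau}\omega_X(\sigma)\,\bm u_{\sigma/\tau} \in \Lambda(\tau) \subseteq \ker\ell$, whence $\sum_{\sigma \succ \tau}\omega_X(\sigma)\,\ell(\bm u_{\sigma/\tau}) = 0$. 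As every weight $\omega_X(\sigma)$ is a positive integer and every $\ell(\bm u_{\sigma/\tau})$ is nonzero, a sum of nonzero reals with positive coefficients can vanish only if both positive and negative terms occur; therefore both signs appear, $|X| \cap B = B$, and $\bm p \in \mathrm{int}(|X|)$.

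Finally I would assemble these facts by a connectivity argument. Let $S$ be the union of all faces of $X$ of dimension at most $n-2$; this is a closed set contained in a finite union of affine subspaces of codimension at least $2$, so $\mathbb R^n \setminus S$ is connected by a standard codimension-two fact. By the two steps above, $|X| \setminus S \subseteq \mathrm{int}(|X|)$, so $|X| \setminus S$ is relatively open in $\mathbb R^n \setminus S$, while $\mathbb R^n \setminus |X|$ is open; these two disjoint sets cover $\mathbb R^n \setminus S$, and the first is nonempty because it contains facet interiors. Connectedness of $\mathbb R^n \setminus S$ then forces $\mathbb R^n \setminus |X| = \emptyset$, that is, $|X| = \mathbb R^n$. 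The only genuinely nontrivial input is the half-space local description combined with the sign argument from balancing; everything else is bookkeeping with the polyhedral complex structure.
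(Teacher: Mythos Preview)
The paper does not actually prove this lemma: it is stated with the remark ``The following lemma is well-known'' and no argument is given. So there is nothing to compare your approach against; your write-up simply supplies a proof that the paper chose to omit.

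Your argument is correct and is one of the standard ways to establish this fact. The local analysis at a codimension-one face is exactly right: since $\Lambda(\sigma)=\Lambda(\tau)+\mathbb Z\,\bm u_{\sigma/\tau}$ has full rank $n$, each $\bm u_{\sigma/\tau}$ lies off $L(\tau)=\ker\ell$, and applying $\ell$ to the balancing relation $\sum_{\sigma\succ\tau}\omega_X(\sigma)\bm u_{\sigma/\tau}\in\Lambda(\tau)$ forces both signs among the $\ell(\bm u_{\sigma/\tau})$, so the facets meeting $\tau$ cover both sides of $\Aff(\tau)$ near $\bm p$. The final step, removing the $(n-2)$-skeleton $S$ and using that $\mathbb R^n\setminus S$ is connected, is also standard; just note that for $n=1$ the set $S$ is empty so the connectivity claim is trivial, and the case $n=0$ is vacuous. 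One very minor point: purity of $X$ guarantees that every $(n-1)$-cell $\tau$ is a face of at least one facet, so the sum in the balancing relation is nonempty and the sign argument has content. Otherwise there are no gaps.
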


An affine linear subspace $W$ of $\mathbb R^n$ is \textit{rational} if it is defined by finitely many rational affine linear functions.
Or equivalently, $W$ is rational if
$$W = \{ \bm p + c_1 \bm q_1 + \cdots c_d \bm q_d \in \mathbb R^n \ | \ c_1, \ldots, c_d \in \mathbb R \}$$
for some $\bm p \in \mathbb R^n$ and $\bm q_1, \ldots, \bm q_n \in \mathbb Z^n$.

Let $W$ be a $d$-dimensional rational affine linear subspace of $\mathbb R^n$.
Fix a vector $\bm p \in W$.
Then $W - \bm p$ is a linear subspace of $\mathbb R^n$.
Since $W$ is rational, $(W - \bm p) \cap \mathbb Z^n$ is a free abelian group of rank $d$.
Let $\{ \bm q_1, \ldots, \bm q_d \}$ be a basis of $(W - \bm p) \cap \mathbb Z^n$.
The map
$$\iota : \mathbb R^d \to W, \qquad \matca{c_1}{\vdots}{c_d} \mapsto \bm p + c_1 \bm q_1 + \cdots c_d \bm q_d$$
is bijective.
Moreover, it induces an isomorphism $\mathbb Z^d \to \mathbb Z \bm q_1 + \cdots + \mathbb Z \bm q_d$.
Hence the following lemma holds.

\begin{lem}
  \label{trop var reduction}
  In the above settings, let $X$ be a tropical variety in $\mathbb R^n$ such that $|X| \subset W$.
  Then the family
  $$\iota^{-1}(X) := \{ \iota^{-1}(\sigma) \ | \ \sigma \in X \}$$
  forms a tropical variety in $\mathbb R^d$.
  Here, for a facet $\sigma \in X$, the weight of $\iota^{-1}(\sigma)$ is that of $\sigma$.
\end{lem}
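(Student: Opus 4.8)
The plan is to exploit that $\iota$ is an affine bijection $\mathbb R^d \to W$ whose linear part, the map $\iota_0 : \mathbb R^d \to \mathbb R^n$ sending $\bm c \mapsto c_1 \bm q_1 + \cdots + c_d \bm q_d$, carries $\mathbb Z^d$ isomorphically onto the \emph{saturated} lattice $(W - \bm p) \cap \mathbb Z^n = \mathbb Z \bm q_1 + \cdots + \mathbb Z \bm q_d$. Every piece of data defining a tropical variety---being a rational polyhedral complex, purity, the facet directions $\bm u_{\sigma/\tau}$, the lattices $\Lambda(\cdot)$, and the balancing sums---should then transfer cleanly along $\iota^{-1}$. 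Write $d'$ for the common dimension $\dim(X)$, so that $d' \leq d$ since $|X| \subset W$.

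First I would check that $\iota^{-1}(X)$ is a rational polyhedral complex. Collecting the $\bm q_i$ as the columns of an integer matrix $Q$, each defining inequality $\bm a \cdot \bm x + b \geq 0$ of a polyhedron $\sigma \in X$ (with $\bm a \in \mathbb Z^n$) pulls back under $\iota$ to $(\bm a Q) \cdot \bm c + (\bm a \cdot \bm p + b) \geq 0$, whose linear part $\bm a Q$ is again an integer vector; hence $\iota^{-1}(\sigma)$ is a rational polyhedron, after discarding any constraint whose linear part vanishes on $\mathbb R^d$ (such a constraint is constant on $W$, so on $W$ it is either redundant or forces $\sigma = \emptyset$). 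Since $\iota$ is an affine bijection it preserves the face relation $\preceq$, dimensions, and intersections, so the two axioms of a polyhedral complex, connectedness, and purity of the dimension $d'$ all follow immediately from the corresponding properties of $X$.

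The core of the argument is the balancing condition. Fix a $(d'-1)$-dimensional $\tau \in X$ and a facet $\sigma \succ \tau$, and set $\tau' = \iota^{-1}(\tau)$, $\sigma' = \iota^{-1}(\sigma)$. Because $\bm u_{\sigma/\tau} \in \Lambda(\sigma) \subset L(\sigma) \subset W - \bm p$ while also $\bm u_{\sigma/\tau} \in \mathbb Z^n$, it lies in $(W - \bm p) \cap \mathbb Z^n = \iota_0(\mathbb Z^d)$, so I may set $\bm u_{\sigma'/\tau'} := \iota_0^{-1}(\bm u_{\sigma/\tau}) \in \mathbb Z^d$. I would then verify the two defining conditions for a facet direction in $\mathbb R^d$: applying the affine map $\iota^{-1}$ to $\sigma \subset \tau + \mathbb R_{\geq 0}\, \bm u_{\sigma/\tau}$ gives $\sigma' \subset \tau' + \mathbb R_{\geq 0}\, \bm u_{\sigma'/\tau'}$, while the second, $\Lambda(\sigma') = \Lambda(\tau') + \mathbb Z\, \bm u_{\sigma'/\tau'}$, reduces to transferring the identity $\Lambda(\sigma) = \Lambda(\tau) + \mathbb Z\, \bm u_{\sigma/\tau}$ through $\iota_0^{-1}$.

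The step I expect to require the most care is the identity $\iota_0^{-1}(\mathbb Z^n) = \mathbb Z^d$, which is exactly the saturatedness of the lattice spanned by the $\bm q_i$: if $\iota_0(\bm c) \in \mathbb Z^n$ then $\iota_0(\bm c) \in (W-\bm p) \cap \mathbb Z^n = \iota_0(\mathbb Z^d)$, and injectivity of $\iota_0$ forces $\bm c \in \mathbb Z^d$. This yields $\Lambda(\sigma') = L(\sigma') \cap \mathbb Z^d = \iota_0^{-1}(L(\sigma)) \cap \iota_0^{-1}(\mathbb Z^n) = \iota_0^{-1}(\Lambda(\sigma))$, and likewise for $\tau$, which settles condition (2) above. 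Finally, since $\iota_0^{-1}$ is a group isomorphism onto $\mathbb Z^d$ and $\sigma \mapsto \sigma'$ restricts to a weight-preserving bijection between facets containing $\tau$ and facets containing $\tau'$, balancedness of $X$ gives
$$\sum_{\sigma' \succ \tau'} \omega(\sigma')\, \bm u_{\sigma'/\tau'} = \iota_0^{-1}\!\left( \sum_{\sigma \succ \tau} \omega_X(\sigma)\, \bm u_{\sigma/\tau} \right) \in \iota_0^{-1}(\Lambda(\tau)) = \Lambda(\tau').$$
Hence $\iota^{-1}(X)$ is balanced, and therefore a tropical variety in $\mathbb R^d$, completing the proof.
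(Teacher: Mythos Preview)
Your proof is correct and follows the approach the paper intends: the paper gives no separate proof of this lemma, instead simply noting that $\iota$ is bijective and induces an isomorphism $\mathbb Z^d \to (W-\bm p)\cap\mathbb Z^n$, then declaring ``Hence the following lemma holds.'' Your argument supplies precisely the details the paper omits, using exactly this lattice isomorphism to transfer rationality, the lattices $\Lambda(\cdot)$, and the balancing condition.
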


\subsection{Newton polytopes}

A subset $P \subset \mathbb R^n$ is \textit{convex} if for any $\bm p, \bm q \in P$ and a real number $t$ with $0 \leq t \leq 1$, $t \bm p + (1-t) \bm q \in P$.
For a subset $S \subset \mathbb R^n$, the \textit{convex hull} of $S$ is the smallest convex set containing $S$, which is denoted by $\conv(S)$.
If $S$ is a finite set, $\conv(S)$ is a polytope.

For a tropical Laurent polynomial $P = \bigoplus_{\bm u \in \mathbb Z^n} a_{\bm u} \bm x^{\bm u} \in \Txpm$, the \textit{newton polytope} of $P$ is the polytope
$$\Newt(P) = \conv\{ \bm u \in \mathbb Z^n \ | \ a_{\bm u} \neq -\infty \},$$
which is a \textit{lattice polytope}, i.e., $\Newt(P)$ is a polytope whose vertices are in $\mathbb Z^n$.
It is known that if polynomials $P,Q \in \Txpm$ define the same function, then $\Newt(P) = \Newt(Q)$.
Hence, for a function $f \in \Txpmf$, $\Newt(f)$ is well-defined.
It is easy to check that, for any $f, g \in \Txpmf$,
$$\Newt(f \oplus g) = \conv(\Newt(f) \cup \Newt(g)),$$
and
$$\Newt(f \odot g) = \Newt(f) + \Newt(g).$$
Hence the following holds.

\begin{lem}
  \label{newt in newt}
  Let $h \in \Txpmf$ be any function.
  Assume that
  $$h = (f_{1,1} \odot \cdots \odot f_{1,k_1}) \oplus \cdots \oplus (f_{l,1} \odot \cdots \odot f_{l,k_l}),$$
  for some $f_{i,j} \in \Txpmf$.
  Then each $\Newt(f_{i,j})$ is included in an integer translation of $\Newt(h)$.
\end{lem}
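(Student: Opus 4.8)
The plan is to reduce everything to the two displayed identities recorded immediately before the statement: $\Newt(f \oplus g) = \conv(\Newt(f) \cup \Newt(g))$ and $\Newt(f \odot g) = \Newt(f) + \Newt(g)$. First I would abbreviate each product as $g_i = f_{i,1} \odot \cdots \odot f_{i,k_i}$, so that $h = g_1 \oplus \cdots \oplus g_l$. Iterating the product formula gives $\Newt(g_i) = \Newt(f_{i,1}) + \cdots + \Newt(f_{i,k_i})$ (a Minkowski sum), and iterating the sum formula gives $\Newt(h) = \conv(\Newt(g_1) \cup \cdots \cup \Newt(g_l))$. The second identity already yields the inclusion $\Newt(g_i) \subseteq \Newt(h)$ for every $i$, with no translation required, since each $\Newt(g_i)$ is one of the sets whose union is being convexified.

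It then remains to fit a single factor $\Newt(f_{i,j})$ inside an integer translate of $\Newt(g_i)$. Assuming first that none of the $f_{i,j}$ is identically $-\infty$, each $\Newt(f_{i,j'})$ is a nonempty lattice polytope, hence has a vertex $\bm v_{i,j'} \in \mathbb Z^n$. Fixing $i$ and $j$, I would set $\bm b = \sum_{j' \neq j} \bm v_{i,j'}$, which is an integer vector lying in the Minkowski sum $\sum_{j' \neq j} \Newt(f_{i,j'})$. Consequently,
$$\Newt(f_{i,j}) + \bm b \subseteq \Newt(f_{i,j}) + \sum_{j' \neq j} \Newt(f_{i,j'}) = \Newt(g_i) \subseteq \Newt(h),$$
so that $\Newt(f_{i,j}) \subseteq \Newt(h) - \bm b$, an integer translate of $\Newt(h)$ because $\bm b \in \mathbb Z^n$.

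The argument is really just bookkeeping on top of the two given formulas, and I do not expect a serious obstacle; the one point that genuinely requires care is the \emph{integrality} of the translation vector. This is exactly where the lattice-polytope property of Newton polytopes enters: choosing $\bm b$ as a sum of lattice vertices (rather than arbitrary points of the summand polytopes) forces $\bm b \in \mathbb Z^n$, which upgrades a mere real translate to an integer translate. Finally, the degenerate case is harmless: if some $f_{i,j}$ is the zero function, then $g_i$ is also the zero function and may simply be deleted from the $\oplus$-decomposition of $h$ without affecting $h$, while $\Newt(f_{i,j}) = \conv(\emptyset) = \emptyset$ is trivially contained in every translate of $\Newt(h)$.
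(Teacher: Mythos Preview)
Your argument is precisely what the paper intends: the lemma is stated immediately after the two identities with only the words ``Hence the following holds,'' and picking a lattice vertex from each cofactor to assemble the integer translation vector $\bm b$ is the natural way to flesh this out.

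One small caveat on your degenerate paragraph. You correctly note that if $f_{i,j}=-\infty$ then $\Newt(f_{i,j})=\emptyset$ is vacuously contained everywhere, but deleting the whole summand $g_i$ leaves the \emph{other} factors $f_{i,j'}$ (with $j'\neq j$ and $f_{i,j'}\neq-\infty$) in that same product unaccounted for---and for those the conclusion can in fact fail: with $n=1$, $f_{1,1}=\overline{0\oplus x^{10}}$, $f_{1,2}=-\infty$, $f_{2,1}=\overline{0\oplus x}$ one gets $h=\overline{0\oplus x}$, so $\Newt(h)=[0,1]$ while $\Newt(f_{1,1})=[0,10]$. This is a defect in the lemma's wording rather than in your proof; in every place the paper invokes the lemma it has already arranged that no factor equals $-\infty$, so the non-degenerate argument you give is all that is ever needed.
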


Here, an \textit{integer translation} of a set $Z \subset \mathbb R^n$ is a set of the form $Z + \bm p$ for some $\bm p \in \mathbb Z^n$.

\section{Non-finitely generatedness}

In this section, we show that the congruence $\mathbf E(Z)$ is not finitely generated if $Z$ is a subset of $\mathbb R^n$ satisfying some conditions.

\subsection{A method to show the Non-finitely generatedness}
\label{method}

We introduce some lemmas, and then we consider a way to prove that a given congruence on $\Txpmf$ is not finitely generated.

\begin{lem}[{\cite[Corollary 2.12]{bertram2017tropical}}]
  \label{JM gen}
  Suppose that $E$ is a finitely generated congruence on a semiring $R$, say $E = \lrangle{(a_1,b_1), \ldots, (a_k, b_k)}$.
  Let $\mathcal S \subset R \times R$ denote the collection of all pairs of the form
  $$\left( \sum_{\bm m,\bm n \in (\mathbb Z_{\geq 0})^k } r_{\bm m, \bm n} \bm a^{\bm m}\bm b^{\bm n}, \sum_{\bm m,\bm n \in (\mathbb Z_{\geq 0})^k } r_{\bm m, \bm n} \bm a^{\bm n}\bm b^{\bm m} \right),$$
  where all but finitely many $r_{\bm m,\bm n} \in R$ are zero.
  Then $E$ consists of all pairs $(f,g)$ for which there exists a finite transitive chain $(f, r_1), (r_1, r_2), \ldots, (r_{n-1}, r_n), (r_n, g) \in \mathcal S$.
\end{lem}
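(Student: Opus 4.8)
The plan is to let $E'$ be the set of all pairs $(f,g)$ for which a finite transitive $\mathcal S$-chain exists, as in the statement, and to prove $E' = E$ by the two inclusions. The whole argument rests on one observation: $\mathcal S$ is reflexive, symmetric, and closed under both the addition and the multiplication of $R$. Granting this, $E'$ is exactly the transitive closure of $\mathcal S$, and both inclusions follow readily.

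First I would establish the structural properties of $\mathcal S$. Reflexivity is seen by taking $\bm m = \bm n = \bm 0$ and $r_{\bm 0, \bm 0} = f$, which uses that $R$ is unitary so that $\bm a^{\bm 0} \bm b^{\bm 0} = 1$; symmetry is seen by passing from the coefficient family $(r_{\bm m, \bm n})$ to $(r_{\bm n, \bm m})$, which interchanges the two components. Closure under addition is immediate, as the componentwise sum of two pairs is realized by the sum of their coefficient families. Closure under multiplication is the only genuine computation: if $(\alpha, \beta)$ and $(\alpha', \beta')$ come from families $(r_{\bm m, \bm n})$ and $(s_{\bm k, \bm l})$, then using commutativity of $R$ one collects $\alpha \alpha'$ and $\beta \beta'$ into the single family $t_{\bm u, \bm v} = \sum_{\bm m + \bm k = \bm u,\ \bm n + \bm l = \bm v} r_{\bm m, \bm n} s_{\bm k, \bm l}$, so that $(\alpha \alpha', \beta \beta') \in \mathcal S$.

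Granting these properties, the inclusion $E \subset E'$ goes as follows. As the transitive closure of a reflexive and symmetric relation, $E'$ is an equivalence relation. It is compatible with the operations because, given a chain $f = c_0, c_1, \ldots, c_a = g$ witnessing $(f,g) \in E'$ and any fixed $d \in R$, closure of $\mathcal S$ under addition together with reflexivity turns each step $(c_i, c_{i+1})$ into $(c_i + d, c_{i+1} + d) \in \mathcal S$, so $(f+d, g+d) \in E'$, and likewise for multiplication; combining two such manipulations handles pairs of related arguments. Hence $E'$ is a congruence, and since each generator satisfies $(a_i, b_i) \in \mathcal S$ (take $r_{\bm e_i, \bm 0} = 1$ and all other coefficients $0$), minimality of $E$ gives $E \subset E'$.

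For the reverse inclusion $E' \subset E$ it suffices, $E$ being transitively closed, to show $\mathcal S \subset E$. The key claim is $(\bm a^{\bm m} \bm b^{\bm n}, \bm a^{\bm n} \bm b^{\bm m}) \in E$ for all $\bm m, \bm n$. I would prove it one coordinate at a time: multiplying $(a_i, b_i) \in E$ by the reflexive pair $(a_i^{p-1} b_i^{q}, a_i^{p-1} b_i^{q})$ gives $(a_i^{p} b_i^{q}, a_i^{p-1} b_i^{q+1}) \in E$, and iterating (invoking the symmetry of $E$ when $m_i < n_i$) transports $(m_i, n_i)$ to $(n_i, m_i)$; multiplying the resulting relations over all $i$ yields the claim. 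Scaling by the reflexive pair $(r_{\bm m, \bm n}, r_{\bm m, \bm n})$ and summing the finitely many nonzero terms then places every element of $\mathcal S$ in $E$. I expect the main obstacle to be not any isolated calculation but the correct packaging of the compatibility of $E'$ with the operations: the point of demanding that $\mathcal S$ itself, and not merely $E$, be closed under the two operations is precisely to splice chains argument by argument, and it is there that the reflexivity of $\mathcal S$ is used, quietly but essentially.
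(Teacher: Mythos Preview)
Your argument is correct. Note, however, that the paper does not supply a proof of this lemma: it is quoted verbatim as \cite[Corollary 2.12]{bertram2017tropical} and used as a black box, so there is no ``paper's own proof'' to compare against. Your write-up is a clean self-contained verification of the cited result; the only structural choice worth flagging is that you prove closure of $\mathcal S$ under multiplication directly (via the convolution family $t_{\bm u,\bm v}$), whereas one could alternatively avoid that computation by checking compatibility of $E'$ with multiplication one factor at a time, using only reflexivity and closure under multiplication by a fixed element --- but your route is just as short.
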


Here, if $\bm m = (m_1, \ldots, m_k)$, then $\bm a^{\bm m}$ means $a_1^{m_1} \cdots a_k^{m_k}$.

\begin{lem}
  \label{newt in newt 2}
  Let $E$ be a finitely generated congruence on $\Txpmf$ and fix a generating set $\{ (f_1,g_1), \ldots, (f_k, g_k) \}$ of $E$.
  Assume that there is a pair $(h_1,h_2) \in E$ such that $h_1 \neq h_2$.
  Then at least one of $\Newt(f_1), \Newt(g_1), \ldots, \Newt(f_k), \Newt(g_k)$ is included in an integer translation of $\Newt(h_1)$.
\end{lem}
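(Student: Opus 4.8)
The plan is to feed the hypothesis through Lemma \ref{JM gen} and then read off the conclusion from Lemma \ref{newt in newt}. Taking $a_i = f_i$ and $b_i = g_i$ as the generators, Lemma \ref{JM gen} produces a finite transitive chain
$$h_1 = s_0,\ s_1,\ \ldots,\ s_N,\ s_{N+1} = h_2, \qquad (s_i, s_{i+1}) \in \mathcal S \ \ (0 \leq i \leq N),$$
where membership in $\mathcal S$ means that for each $i$ there are $r^{(i)}_{\bm m, \bm n} \in \Txpmf$, all but finitely many of which are zero, with
$$s_i = \bigoplus_{\bm m, \bm n} r^{(i)}_{\bm m, \bm n} \odot \bm f^{\bm m} \odot \bm g^{\bm n}, \qquad s_{i+1} = \bigoplus_{\bm m, \bm n} r^{(i)}_{\bm m, \bm n} \odot \bm f^{\bm n} \odot \bm g^{\bm m}.$$
The point of this representation is that it displays $s_i$ in precisely the shape demanded by Lemma \ref{newt in newt}: a tropical sum of tropical products of elements of $\Txpmf$. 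Hence, for this representation, the Newton polytope of every factor that occurs --- in particular $\Newt(f_j)$ whenever some term has $m_j \geq 1$, and $\Newt(g_j)$ whenever some term has $n_j \geq 1$ --- is contained in an integer translation of $\Newt(s_i)$. So it is enough to locate a step of the chain at which $s_i = h_1$ and at least one of the $f_j, g_j$ genuinely appears.

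To produce such a step I would invoke $h_1 \neq h_2$. Since $s_0 = h_1 \neq h_2 = s_{N+1}$, the terms of the chain are not all equal, so there is a smallest index $i$ with $s_i \neq s_{i+1}$; by minimality $s_0 = s_1 = \cdots = s_i$, and therefore $s_i = h_1$. Now I compare the two sums attached to this step. If every pair $(\bm m, \bm n)$ with $r^{(i)}_{\bm m, \bm n}$ nonzero satisfied $\bm m = \bm n$, then $\bm f^{\bm m} \odot \bm g^{\bm n} = \bm f^{\bm n} \odot \bm g^{\bm m}$ term by term and the two sums would coincide, forcing $s_i = s_{i+1}$, against the choice of $i$. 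So some nonzero term has $\bm m \neq \bm n$; as $\bm m, \bm n \in (\mathbb Z_{\geq 0})^k$ then differ in a coordinate $j$, we have $\max(m_j, n_j) \geq 1$, and consequently $f_j$ (if $m_j \geq 1$) or $g_j$ (if $n_j \geq 1$) occurs as a factor in the representation of $h_1 = s_i$.

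Applying Lemma \ref{newt in newt} to this representation of $h_1$ then yields that $\Newt(f_j)$, respectively $\Newt(g_j)$, is included in an integer translation of $\Newt(h_1)$, which is the desired conclusion. The single delicate point is the selection of the step: applying Lemma \ref{newt in newt} to the very first pair $(h_1, s_1)$ need not help, because the representation of $h_1$ there may involve none of the $f_j, g_j$ (for example when only the $\bm m = \bm n = \bm 0$ term is nonzero, whence $s_1 = h_1$ and nothing is learned). Passing to the first index at which the chain actually moves both keeps the relevant element equal to $h_1$ and forces a nonconstant factor to surface; this is the crux, and everything else is routine bookkeeping.
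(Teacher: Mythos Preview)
Your proof is correct and follows essentially the same route as the paper's: invoke Lemma \ref{JM gen} to obtain a transitive chain, pass to the first step at which the chain actually moves (the paper phrases this as ``we may assume $h_1 \neq r_1$''), argue that the two expressions at that step cannot have all exponent pairs equal, and then apply Lemma \ref{newt in newt} to the resulting factor $f_j$ or $g_j$. Your treatment of the step selection and of the condition $\bm m \neq \bm n$ (versus the paper's $(\bm m,\bm n) \neq (\mathbf 0,\mathbf 0)$) is in fact slightly more explicit than the paper's, but the argument is the same.
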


\begin{proof}
  Let $\mathcal S$ be the collection of all pairs of the form
  $$\left( \sum_{\bm m,\bm n \in (\mathbb Z_{\geq 0})^k } r_{\bm m, \bm n} \bm f^{\bm m}\bm g^{\bm n}, \sum_{\bm m,\bm n \in (\mathbb Z_{\geq 0})^k } r_{\bm m, \bm n} \bm f^{\bm n}\bm g^{\bm m} \right),$$
  where all but finitely many $r_{\bm m,\bm n} \in \Txpmf$ are $-\infty$.
  Then, by Lemma \ref{JM gen}, there exist $r_1, \ldots, r_n \in \Txpmf$ such that
  $$(h_1, r_1), (r_1, r_2), \ldots, (r_{n-1}, r_n), (r_n, h_2) \in \mathcal S.$$
  Since $h_1 \neq h_2$, we may assume that $h_1 \neq r_1$.
  Let
  $$h_1 = \sum_{\bm m,\bm n \in (\mathbb Z_{\geq 0})^k } r_{\bm m, \bm n} \bm f^{\bm m}\bm g^{\bm n}, \qquad r_1 = \sum_{\bm m,\bm n \in (\mathbb Z_{\geq 0})^k } r_{\bm m, \bm n} \bm f^{\bm n}\bm g^{\bm m}.$$
  Since $h_1 \neq r_1$, there exists $(\bm m, \bm n) \in ((\mathbb Z_{\geq 0})^k)^2 \setminus \{ (\mathbf 0, \mathbf 0) \}$ such that $(r_{\bm m, \bm n}) \neq -\infty$.
  Fix such $(\bm m, \bm n)$.
  We may assume that the first coordinate of $\bm m$ is not zero without loss of generality.
  Then, by Lemma \ref{newt in newt}, $\Newt(f_1)$ is included in a integer translation of $\Newt(h_1)$.
\end{proof}

\begin{rem}
  \label{newt in newt infty}
  It is easily shown that even if $E$ is not finitely generated, the statement similar to Lemma \ref{JM gen} holds: Suppose that $E$ is a congruence on a semiring $R$, and let $\mathcal A$ be a generating set of $E$.
  Let $\mathcal S \subset R \times R$ denote the collection of all pairs of the form
  $$\left( \sum_{\bm m,\bm n \in (\mathbb Z_{\geq 0})^k } r_{\bm m, \bm n} \bm a^{\bm m}\bm b^{\bm n}, \sum_{\bm m,\bm n \in (\mathbb Z_{\geq 0})^k } r_{\bm m, \bm n} \bm a^{\bm n}\bm b^{\bm m} \right)$$
  for some pairs $(a_1, b_1), \ldots, (a_k, b_k) \in \mathcal A$, where all but finitely many $r_{\bm m,\bm n} \in R$ are zero.
  Then $E$ consists of all pairs $(f,g)$ for which there exists a finite transitive chain $(f, r_1), (r_1, r_2), \ldots, (r_{n-1}, r_n), (r_n, g) \in \mathcal S$.

  Hence the statement similar to Lemma \ref{newt in newt 2} also holds: Let $E$ be a congruence on $\Txpmf$ and let $E = \lrangle{\mathcal A}$.
  Assume that there is a pair $(h_1,h_2) \in E$ such that $h_1 \neq h_2$.
  Then there is a pair $(f,g) \in \mathcal A$ such that at least one of $\Newt(f), \Newt(g)$ is included in an integer translation of $\Newt(h_1)$.
  We will use these facts in Section \ref{section min gen}.
\end{rem}

By using Lemma \ref{newt in newt 2}, one may consider the following way to prove a given congruence $E$ on $\Txpmf$ is not finitely generated:
\begin{enumerate}[(1)]
  \item Assume that $E$ is finitely generated and let $E = \lrangle{(f_1, g_1), \ldots, (f_k, g_k)}$.
  \item Find a pair $(h_1, h_2) \in E$ such that $h_1 \neq h_2$ and any integer translation of $\Newt(h_1)$ includes none of $\Newt(f_1), \Newt(g_1), \ldots, \Newt(f_k), \Newt(g_k)$.
  \item Then it contradicts to Lemma \ref{newt in newt 2}, hence $E$ is not finitely generated.
\end{enumerate}

However, this method does not work well in general.
That is, if $f_i = -\infty$ for some $f_i$, then the desired $h_1$ does not exist because $\Newt(f_i) = \emptyset$.
Fortunately, if $E$ is of the form $\mathbf E(Z)$ for some nonempty subset $Z \subset \mathbb R^n$, we can avoid this issue.
Indeed, if $(-\infty, g_i) \in \mathbf E(Z)$, obviously $g_i= -\infty$.
Thus we may remove the pair $(f_i, g_i)$ from the generating set.

There is another problem.
If some $f_i$ is a monomial, then $\Newt(f_i)$ consists of a single point.
Hence an integer translation of $\Newt(h_1)$ includes $\Newt(f_i)$ unless $\Newt(h_1) = \emptyset$.
Therefore $h_1$ must be $-\infty$.
However, if $h_1 = -\infty$ and $E$ is of the form $\mathbf E(Z)$ for some nonempty subset $Z \subset \mathbb R^n$, $(h_1, h_2) \in \mathbf E(Z)$ implies $h_1 = h_2 = -\infty$.
Hence there is no desired pair $(h_1, h_2)$.
In the next subsection, we will show that if $Z$ is \textit{unbounded in all rational directions} (see Definition \ref{ubd def}), we may remove monomials from the generating set.

\subsection{Subsets unbounded in all rational directions}

\begin{dfn}
  \label{ubd def}
  A subset $Z \subset \mathbb R^n$ is \textit{unbounded in all rational directions} if, for any $\bm u \in \mathbb Z^n \setminus \{ \mathbf 0 \}$, the set $\{ \bm p \cdot \bm u \in \mathbb R \ | \ \bm p \in Z \}$ is upper unbounded.
\end{dfn}

Here, the symbol $\cdot$ means the standard inner product on $\mathbb R^n$.

\begin{lem}
  \label{ubd1}
  Let $Z \subset \mathbb R^n$ be a subset unbounded in all rational directions.
  Let $f := \overline{c_1 \odot \bm x^{\bm u_1}}, g := \overline{c_2 \odot \bm x^{\bm u_2}} \in \Txpmf$ be monomial functions such that $\bm u_1 \neq \bm u_2$.
  Then there exist points $\bm p, \bm q \in Z$ such that $f(\bm p)>g(\bm p)$ and $f(\bm q) < g(\bm q)$.
\end{lem}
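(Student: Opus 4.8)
The plan is to reduce the whole statement to a single observation about the sign of the affine function $\bm p \mapsto f(\bm p) - g(\bm p)$. First I would unwind the definitions: as functions on $\mathbb R^n$, the monomials evaluate to $f(\bm p) = c_1 + \bm u_1 \cdot \bm p$ and $g(\bm p) = c_2 + \bm u_2 \cdot \bm p$, since tropical multiplication is ordinary addition and $\overline{\bm x^{\bm u}}(\bm p) = \bm u \cdot \bm p$. Subtracting, I obtain
$$f(\bm p) - g(\bm p) = (c_1 - c_2) + \bm w \cdot \bm p, \qquad \bm w := \bm u_1 - \bm u_2.$$
The hypothesis $\bm u_1 \neq \bm u_2$ guarantees $\bm w \in \mathbb Z^n \setminus \{ \mathbf 0 \}$, so $\bm w$ is an admissible direction for the unboundedness hypothesis.

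Next I would invoke Definition \ref{ubd def} twice, once for $\bm w$ and once for $-\bm w$. Applying it to $\bm w$, the set $\{ \bm p \cdot \bm w \mid \bm p \in Z \}$ is upper unbounded, so I can choose $\bm p \in Z$ with $\bm p \cdot \bm w > c_2 - c_1$; this yields $f(\bm p) - g(\bm p) > 0$, i.e. $f(\bm p) > g(\bm p)$. Applying the definition instead to $-\bm w$, which is again a nonzero integer vector, the set $\{ -\bm q \cdot \bm w \mid \bm q \in Z \}$ is upper unbounded, so I can choose $\bm q \in Z$ with $-\bm q \cdot \bm w > c_1 - c_2$, equivalently $\bm q \cdot \bm w < c_2 - c_1$; this yields $f(\bm q) < g(\bm q)$. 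Strict inequalities are available because upper unboundedness supplies values exceeding any prescribed real number.

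I do not anticipate any genuine obstacle: the argument is a direct computation followed by two applications of the hypothesis. The only point that requires a moment's care is remembering to apply the unboundedness both in the direction $\bm w$ and in the opposite direction $-\bm w$, since the definition only provides \emph{upper} unboundedness while I need to drive the difference $f - g$ to both signs. The integrality of $\bm w$ is exactly what makes $-\bm w$ a legitimate test direction, so no separate argument is needed there.
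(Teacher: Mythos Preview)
Your proof is correct and essentially identical to the paper's: both set $\bm w = \bm u_1 - \bm u_2$, apply Definition~\ref{ubd def} to $\bm w$ to obtain $\bm p$, and then symmetrically (the paper just says ``similarly shown'') apply it to $-\bm w$ to obtain $\bm q$. Your write-up is a bit more explicit about the second application, but there is no substantive difference.
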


\begin{proof}
  The set $\{ \bm p \cdot (\bm u_1 - \bm u_2) \in \mathbb R \ | \ \bm p \in Z \}$ is upper unbounded.
  Hence there exists $\bm p \in Z$ such that $\bm p \cdot (\bm u_1 - \bm u_2) > c_2 - c_1$.
  This $\bm p$ satisfies $f(\bm p) > g(\bm p)$.
  The existence of $\bm q$ is similarly shown.
\end{proof}

\begin{lem}
  \label{ubd2} 
  Let $Z \subset \mathbb R^n$ be a subset unbounded in all rational directions, and let $(f,g) \in \mathbf E(Z)$ be any pair.
  If $f$ is a monomial function, then $f=g$.
\end{lem}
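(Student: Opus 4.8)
The plan is to exploit the fact that a tropical Laurent polynomial function is the pointwise maximum of its monomials, so that every monomial of $g$ is dominated by $g$ everywhere, and then to invoke Lemma \ref{ubd1} to rule out any monomial of $g$ whose exponent differs from that of $f$.

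First I would write $f = \overline{c \odot \bm x^{\bm u}}$ for some $c \in \mathbb R$ and $\bm u \in \mathbb Z^n$, and fix a tropical Laurent polynomial $P = \bigoplus_{i=1}^m c_i \odot \bm x^{\bm v_i}$ (with $c_i \in \mathbb R$ and the exponents $\bm v_i$ pairwise distinct) representing $g$, so that $g(\bm p) = \max_i (c_i + \bm v_i \cdot \bm p)$. Writing $g_i := \overline{c_i \odot \bm x^{\bm v_i}}$ for the $i$-th monomial function, we have $g(\bm p) \geq g_i(\bm p)$ for every $\bm p \in \mathbb R^n$ and every $i$, directly from this maximum formula.

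Next I would argue that every exponent occurring in $g$ equals $\bm u$. Suppose, for contradiction, that $\bm v_i \neq \bm u$ for some $i$. Applying Lemma \ref{ubd1} to the monomial functions $f$ and $g_i$, whose exponents $\bm u$ and $\bm v_i$ differ, there is a point $\bm q \in Z$ with $f(\bm q) < g_i(\bm q)$. But then $g(\bm q) \geq g_i(\bm q) > f(\bm q)$, whereas $\bm q \in Z$ and $(f,g) \in \mathbf E(Z)$ force $g(\bm q) = f(\bm q)$, a contradiction. Hence $\bm v_i = \bm u$ for all $i$; since the $\bm v_i$ were taken distinct, $g$ is a single monomial function $g = \overline{c' \odot \bm x^{\bm u}}$. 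Finally, since $Z$ is nonempty (being unbounded in all rational directions forces this when $n \geq 1$), evaluating at any $\bm p \in Z$ gives $c + \bm u \cdot \bm p = c' + \bm u \cdot \bm p$, so $c = c'$ and therefore $f = g$.

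The only genuinely delicate points are the degenerate cases, which I would dispatch separately: if $f$ is the constant $-\infty$, then $g|_Z = -\infty$ on the nonempty set $Z$, which is impossible unless $g$ has no monomial term at all, again giving $g = -\infty = f$. The step carrying all the content is the use of Lemma \ref{ubd1} to show that no exponent other than $\bm u$ can survive in $g$; everything else is bookkeeping. I expect no serious obstacle beyond making the domination inequality $g \geq g_i$ and the choice of a representing polynomial precise, and observing that only the ``$\bm q$'' half of Lemma \ref{ubd1} is actually needed.
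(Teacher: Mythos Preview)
Your argument is correct and follows essentially the same route as the paper's proof: write $g$ as a maximum of monomials with distinct exponents, use Lemma \ref{ubd1} to rule out any exponent different from $\bm u$, and then compare coefficients at a single point of $Z$. The only differences are cosmetic---you spell out the domination $g \geq g_i$ and the degenerate case $f=-\infty$ a bit more explicitly than the paper does.
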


\begin{proof}

  Let $f = \overline {c \odot \bm x^{\bm u}}$ and $g = \overline{\bigoplus_{i=1}^m c_i \odot \bm x^{\bm v_i}}$.
  We may assume that $\bm v_1, \ldots, \bm v_m$ are distinct.
  If some $\bm v_i$ is not equal to $\bm u$, by Lemma \ref{ubd1}, there exists $\bm p \in Z$ such that
  $$f(\bm p) < c_i + \bm v_i \cdot \bm p \leq g(\bm p),$$
  which contradicts to $(f,g) \in \mathbf E(Z)$.
  Hence $m=1$ and $\bm v_1 = \bm u$.

  Take any point $\bm p \in Z$.
  Since $f(\bm p) = g(\bm p)$,
  $$c + \bm u \cdot \bm p = c_1 + \bm u \cdot \bm p,$$
  which means that $c = c_1$, hence $f=g$.
\end{proof}

\subsection{A condition for not being finitely generated}

Now, we use the method established in Section \ref{method} to show the following theorem.

\begin{thm}
  \label{not fg}
  Assume that $n \geq 2$.
  Let $Z$ be a proper closed subset of $\mathbb R^n$ that is unbounded in all rational directions.
  Then $\mathbf E(Z)$ is not finitely generated.
\end{thm}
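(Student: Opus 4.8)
The plan is to argue by contradiction, following the method of Section \ref{method}: I would combine the Newton‑polytope obstruction of Lemma \ref{newt in newt 2} with the rigidity of monomials coming from Lemma \ref{ubd2}. So assume $\mathbf E(Z)$ is finitely generated and fix a generating set $\{(f_1,g_1),\dots,(f_k,g_k)\}$. The first step is to normalize this set. Since $Z$ is unbounded in all rational directions it is in particular nonempty, so if some $f_i=-\infty$ then $(f_i,g_i)\in\mathbf E(Z)$ forces $g_i=-\infty$; such a pair is diagonal and may be discarded without changing $\lrangle{(f_1,g_1),\dots,(f_k,g_k)}$. Next, if some $f_i$ (or symmetrically some $g_i$) is a monomial function, then by Lemma \ref{ubd2} the relation $(f_i,g_i)\in\mathbf E(Z)$ forces $f_i=g_i$, so again the pair is diagonal and removable. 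After these removals every surviving $f_i,g_i$ is a non‑monomial function, hence each of $\Newt(f_i),\Newt(g_i)$ contains at least two lattice points. Put $D:=\max_i\max\{\mathrm{diam}\,\Newt(f_i),\ \mathrm{diam}\,\Newt(g_i)\}$, a finite real number.

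The heart of the argument is to produce a single pair $(h_1,h_2)\in\mathbf E(Z)$ with $h_1\neq h_2$ whose Newton polytope $\Newt(h_1)$ is a lattice segment $\conv\{\bm 0,\bm w\}$, where $\bm w\in\mathbb Z^n$ is primitive (so that $\conv\{\bm 0,\bm w\}$ contains no lattice point other than its two endpoints) and $|\bm w|>D$. Granting such a pair, Lemma \ref{newt in newt 2} yields a generator, say with Newton polytope $\Newt(f_i)$, contained in an integer translate $\Newt(h_1)+\bm p$ with $\bm p\in\mathbb Z^n$. But $\Newt(h_1)+\bm p$ is a segment whose only lattice points are $\bm p$ and $\bm p+\bm w$; since $\Newt(f_i)$ is a lattice polytope, all of its lattice points lie in $\{\bm p,\bm p+\bm w\}$, forcing $\mathrm{diam}\,\Newt(f_i)\in\{0,|\bm w|\}$. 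The value $0$ is excluded because $f_i$ is not a monomial, while $|\bm w|>D$ contradicts $\mathrm{diam}\,\Newt(f_i)\le D$. (If the cleaned generating set is empty, $\mathbf E(Z)$ is the diagonal, which is already contradicted by the mere existence of the nontrivial pair $(h_1,h_2)$.) Hence no finite generating set can exist.

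The \emph{main obstacle} is the construction of $(h_1,h_2)$, and this is precisely where the hypotheses that $Z$ is proper and closed enter. The shape of the construction I would carry out is as follows. Choose a primitive $\bm w$ with $|\bm w|>D$ and set $h_1=\overline{c_1\odot\bm x^{\bm 0}}\oplus\overline{c_2\odot\bm x^{\bm w}}$, a binomial whose Newton polytope is the desired long segment; then introduce a third monomial and set $h_2=h_1\oplus\overline{c_3\odot\bm x^{\bm u_3}}$. The locus where the new monomial strictly dominates,
$$R=\{\,\bm x\in\mathbb R^n \mid c_3+\bm u_3\cdot\bm x>\max(c_1,\ c_2+\bm w\cdot\bm x)\,\},$$
is an intersection of two open half‑spaces, and one checks that $(h_1,h_2)\in\mathbf E(Z)$ with $h_1\neq h_2$ exactly when $R$ is nonempty and $R\cap Z=\emptyset$. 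Note that $\Newt(h_1)$ is, by construction, literally the segment $\conv\{\bm 0,\bm w\}$, independently of the added term. Thus the whole problem reduces to fitting the wedge $R$ into the complement of $Z$: I would exhibit an unbounded open convex cone $W\subseteq\mathbb R^n\setminus Z$, then tune $\bm u_3$ (which prescribes the recession directions of $R$ through the facet normals $\bm u_3$ and $\bm u_3-\bm w$) and the constants $c_1,c_2,c_3$ (which translate the apex of $R$) so that $R\subseteq W$.

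The delicate point, and the part I expect to require the most care, is guaranteeing that $\mathbb R^n\setminus Z$ contains such an unbounded convex region $W$ admitting an inscribed rational wedge, using only that $Z$ is closed, proper, and unbounded in all rational directions. Properness and closedness give an open ball disjoint from $Z$, but the construction needs this to be promoted to a region with a full‑dimensional recession cone; I would analyze the directions along which $\mathbb R^n\setminus Z$ is unbounded and extract a rational cone from them, then enlarge the ball along that cone, the hypothesis $n\ge 2$ being used to ensure the resulting intersection of two half‑spaces is genuinely a wedge rather than forced to meet $Z$. Once $W$ is secured, arranging the two facet normals of $R$ to lie in the dual of $W$ is a finite computation over $\mathbb Q$, and pushing the apex of $R$ deep into $W$ by adjusting $c_1,c_2,c_3$ is routine. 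I would then verify that the resulting $R$ is nonempty (so $h_1\neq h_2$) and disjoint from $Z$ (so $(h_1,h_2)\in\mathbf E(Z)$), completing the contradiction with Lemma \ref{newt in newt 2}.
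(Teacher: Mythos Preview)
Your normalization of the generating set and the final contradiction via Lemma \ref{newt in newt 2} are exactly right, and match the paper. The gap is in the construction of $(h_1,h_2)$.

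You take $h_1$ to be a binomial with Newton polytope equal to a primitive segment, and $h_2=h_1\oplus\overline{c_3\odot\bm x^{\bm u_3}}$. The locus
\[
R=\{\bm x:\ c_3+\bm u_3\cdot\bm x>\max(c_1,\ c_2+\bm w\cdot\bm x)\}
\]
is a nonempty intersection of two open half-spaces, hence \emph{unbounded} in $\mathbb R^n$ for $n\ge 2$, regardless of how you choose $\bm u_3,c_1,c_2,c_3$. To have $(h_1,h_2)\in\mathbf E(Z)$ you therefore need $\mathbb R^n\setminus Z$ to contain an unbounded convex wedge. But nothing in the hypotheses guarantees this: take $Z=\{\bm x\in\mathbb R^n:\|\bm x\|\ge 1\}$. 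This $Z$ is closed, proper, and unbounded in every rational direction, yet $\mathbb R^n\setminus Z$ is the open unit ball, which is bounded. Your ``delicate point'' is not merely delicate---it is false in general, and the approach via a one-dimensional Newton polytope cannot be repaired.

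The paper's fix is to make $\Newt(h_1)$ an $n$-dimensional lattice polytope $P$ with an interior lattice point $\bm u_0$, and to take $h_2=\overline{\varepsilon\odot\bm x^{\bm u_0}}\oplus h_1$. Because $\bm u_0$ is interior to $P$, the region where the added monomial dominates is \emph{bounded} (Lemma \ref{app1}), and shrinks to a point as $\varepsilon\to 0$; since $Z$ is closed and proper one may translate so that $\mathbf 0\notin Z$ and then choose $\varepsilon$ small enough that this bounded region misses $Z$ (Lemma \ref{any polytope}). To recover the spacing obstruction you were after with a segment, the paper constructs (Corollary \ref{slender}) an $n$-dimensional lattice parallelepiped whose lattice points are pairwise more than $N$ apart, via an explicit unimodular shear. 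Thus the key idea you are missing is that the Newton polytope of $h_1$ must be full-dimensional precisely so that the ``difference region'' $R$ becomes bounded and can be tucked into a small ball in $\mathbb R^n\setminus Z$.
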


The following lemma is helpful.

\begin{lem}
  \label{any polytope}
  Let $Z \subset \mathbb R^n$ be a closed subset such that $\mathbf 0 \not\in Z$.
  Let $P$ be an $n$-dimensional lattice polytope in $\mathbb R^n$ having an interior lattice point.
  Then there exists a pair $(f,g) \in \mathbf E(Z)$ such that $f \neq g$ and $\Newt(f) = P$.
\end{lem}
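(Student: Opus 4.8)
The plan is to exploit the one hypothesis that really matters, namely that $Z$ is closed with $\mathbf 0 \notin Z$. This gives an open ball $B(\mathbf 0, \epsilon)$ around the origin, for some $\epsilon > 0$, with $B(\mathbf 0, \epsilon) \cap Z = \emptyset$. I would then construct $f$ and $g$ that agree everywhere \emph{except} on a small neighbourhood of the origin contained in $B(\mathbf 0, \epsilon)$, so that $f|_Z = g|_Z$ holds automatically, while $\Newt(f) = P$ is arranged by hand. The interior lattice point of $P$ is exactly what lets me create a controlled, \emph{bounded} region of disagreement near the origin.

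Concretely, fix an interior lattice point $\bm w \in \mathbb Z^n$ of $P$ and let $\bm v_1, \ldots, \bm v_r$ be the vertices of $P$. I would set
$$g = \overline{\bigoplus_{i=1}^r \bm x^{\bm v_i}}, \qquad f = \overline{\left( \bigoplus_{i=1}^r \bm x^{\bm v_i} \right) \oplus (c \odot \bm x^{\bm w})}$$
for a constant $c > 0$ to be chosen small. As functions $f(\bm p) = \max\{\, g(\bm p),\ c + \bm w \cdot \bm p \,\}$, so $f$ and $g$ differ precisely on the open set $U_c = \{ \bm p : (\bm v_i - \bm w) \cdot \bm p < c \text{ for all } i \}$. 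The origin lies in $U_c$ since $c > 0$, giving $f(\mathbf 0) = c > 0 = g(\mathbf 0)$ and hence $f \neq g$; and because every vertex monomial as well as the $\bm w$-monomial appears in $f$ while $\bm w \in P$, one has $\Newt(f) = \conv(\{\bm v_i\} \cup \{\bm w\}) = P$.

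The heart of the argument, and the step I expect to be the only real obstacle, is showing that $U_c$ can be forced inside $B(\mathbf 0, \epsilon)$. The key geometric fact is that, because $\bm w$ is an \emph{interior} point of the $n$-dimensional polytope $P$, the vectors $\{\bm v_i - \bm w\}$ positively span $\mathbb R^n$: any direction $\bm d$ satisfies $\bm w + t\bm d \in P = \conv\{\bm v_i\}$ for small $t>0$, which rewrites $\bm d$ as a nonnegative combination of the $\bm v_i - \bm w$. A standard recession-cone argument then forces $U_1 = \{\bm p : (\bm v_i - \bm w)\cdot \bm p < 1 \ \forall i\}$ to be bounded, and since $U_c = c\, U_1$ one gets $U_c \to \{\mathbf 0\}$ as $c \to 0^+$. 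Choosing $c > 0$ small enough that $U_c \subset B(\mathbf 0, \epsilon)$ completes the construction.

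Finally I would assemble the verification: for $\bm p \in Z$ we have $\bm p \notin U_c$, since $U_c \subset B(\mathbf 0,\epsilon)$ misses $Z$, hence $f(\bm p) = g(\bm p)$, so $(f,g) \in \mathbf E(Z)$. Together with $f \neq g$ and $\Newt(f) = P$ this is exactly the assertion. Everything apart from the boundedness of $U_1$ is routine once that geometric fact is in hand.
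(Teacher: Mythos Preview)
Your proposal is correct and follows essentially the same approach as the paper. The paper also takes $g$ to be the max of the vertex monomials and $f = g \oplus (c \odot \bm x^{\bm w})$ for an interior lattice point $\bm w$ (with the roles of $f$ and $g$ swapped relative to yours), observes that the disagreement locus is the polyhedron $\{(\bm v_i - \bm w)\cdot \bm p < c\ \forall i\}$, proves this set is bounded via the same positive-spanning argument (stated as a separate appendix lemma), and shrinks $c$ so that it misses $Z$.
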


\begin{proof}
  Let $\bm u_1, \ldots, \bm u_r \in \mathbb Z^n$ be the vertices of $P$ and let $\bm u_0$ be an interior lattice point of $P$.
  Let
  $$f = \overline{\bm x^{\bm u_1} \oplus \cdots \oplus \bm x^{\bm u_r}}.$$
  Also, for each real number $\varepsilon > 0$, consider the function
  $$g_{\varepsilon} = \overline{\varepsilon \odot \bm x^{\bm u_0}} \oplus f.$$
  Then $\Newt(f) = P$.
  Since $f(\mathbf 0) = 0$ and $g_{\varepsilon}(\mathbf 0) = \varepsilon$, we have $f \neq g_{\varepsilon}$.
  Thus it is sufficient to show that there exists $\varepsilon > 0$ such that $(f, g_{\varepsilon}) \in \mathbf E(Z)$.

  Let
  $$D_{\varepsilon} = \{ \bm p \in \mathbb R^n \ | \ \varepsilon + \bm u_0 \cdot \bm p > f(\bm p) \}.$$
  Since $\mathbf 0 \in D_{\varepsilon}$, $D_{\varepsilon}$ is not empty.
  Explicitly, $D_{\varepsilon}$ is the solution set of the inequalities
  $$(\bm u_1 - \bm u_0) \cdot \bm p < \varepsilon,$$
  $$(\bm u_2 - \bm u_0) \cdot \bm p < \varepsilon,$$
  $$\vdots$$
  $$(\bm u_r - \bm u_0) \cdot \bm p < \varepsilon.$$
  Note that the set $\{ \bm u_1 - \bm u_0, \ldots, \bm u_r - \bm u_0 \}$ is the vertex set of the polytope $P + (- \bm u_0)$ and $\mathbf 0$ is an interior point of $P + (- \bm u_0)$.
  Hence $D_{\varepsilon}$ is bounded (see Lemma \ref{app1}).

  For any positive real number $k$, the set $D_{k \varepsilon}$ is a dilation of $D_{\varepsilon}$ with respect to $\mathbf 0$ by a factor of $k$.  
  Since $Z$ is closed and $\mathbf 0 \not\in Z$, there exists a sufficiently small $\varepsilon > 0$ such that $D_{\varepsilon} \cap Z = \emptyset$.
  For such $\varepsilon$, we have $(f, g_{\varepsilon}) \in \mathbf E(Z)$.
  Indeed, for any $\bm p \in Z$, we have $\bm p \not\in D_{\varepsilon}$.
  This means that $f(\bm p) \geq \varepsilon + \bm u_0 \cdot \bm p$.
  Hence
  $$g_{\varepsilon}(\bm p) = \max\{ \varepsilon + \bm u_0 \cdot \bm p, \ f(\bm p) \} = f(\bm p).$$
  Therefore $g_{\varepsilon}|_Z = f|_Z$.
\end{proof}

Next, we construct a ``very thin'' polytope.
Assume that $n \geq 2$.
Fix a positive integer $N$.
Consider the following $n \times n$ matrix:
$$M(n,N) = \begin{pmatrix}
  N-1 & N^2 & N^3 & N^4 & \cdots & N^n \\
  1 & N+1 & 0 & 0 & \cdots & 0 \\
  0 & 0 & 1 & 0 & \cdots & 0 \\
  0 & 0 & 0 & 1 & \cdots & 0 \\
  \vdots & \vdots & \vdots & \vdots && \vdots \\
  0 & 0 & 0 & 0 & \cdots & 1 
\end{pmatrix}$$
Note that $M(n,N)$ is an integer matrix with $\det M(n,N) = -1$.
Hence the map $\varphi : \mathbb R^n \to \mathbb R^n, \ \bm p \mapsto M(n,N) \bm p$ is a linear automorphism and it induces an automorphism on $\mathbb Z^n$.

Let $\bm d_i$ be the $i$-th column of $M(n,N)$, and let $\{ \bm e_1, \cdots, \bm e_n \}$ be the standard basis of $\mathbb R^n$.
Let $P_1$, $P_2$ be the polytopes in $\mathbb R^n$ defined as follows:
$$P_1 = \{ \lambda_1 \bm d_1 + \cdots \lambda_n \bm d_n \in \mathbb R^n \ | \ 0 \leq \lambda_i \leq 2 \text{ for any } i \}.$$
$$P_2 = \{ \lambda_1 \bm e_1 + \cdots \lambda_n \bm e_n \in \mathbb R^n \ | \ 0 \leq \lambda_i \leq 2 \text{ for any } i \}.$$
Via the transformation $\varphi$, $P_2$ corresponds to $P_1$.
Observe that $P_2$ has an interior point at $(1, \ldots, 1)$.
Hence $P_1$ also has an interior point.
Moreover, observe that the set of lattice points of $P_2$ is
$$\{ \lambda_1 \bm e_1 + \cdots \lambda_n \bm e_n \in \mathbb R^n \ | \ \lambda_i \in \{ 0,1,2 \} \text{ for any } i \}.$$
Hence the set of lattice points of $P_1$ is
$$\{ \lambda_1 \bm d_1 + \cdots \lambda_n \bm d_n \in \mathbb R^n \ | \ \lambda_i \in \{ 0,1,2 \} \text{ for any } i \}.$$

\begin{lem}
  Assume that $N \geq 3$.
  For any two distinct lattice points $\bm p, \bm q$ of $P_1$, we have
  $$\norm {\bm p - \bm q} \geq N-2,$$
  where the norm is the Euclidean norm.
\end{lem}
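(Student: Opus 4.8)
The plan is to reduce the estimate to a direct computation with the explicit columns $\bm d_1, \ldots, \bm d_n$ of $M(n,N)$. Since the lattice points of $P_1$ are exactly the combinations $\sum_i \lambda_i \bm d_i$ with $\lambda_i \in \{0,1,2\}$, and since the $\bm d_i$ are linearly independent (as $\det M(n,N) = -1$), two distinct lattice points $\bm p, \bm q$ determine distinct tuples $(\lambda_i), (\mu_i)$ and differ by
$$\bm p - \bm q = \sum_{i=1}^n \nu_i \bm d_i, \qquad \nu_i := \lambda_i - \mu_i \in \{-2,-1,0,1,2\},$$
where not all $\nu_i$ vanish. Reading off the rows of $M(n,N)$, the coordinates of this vector are
$$(\bm p - \bm q)_1 = (N-1)\nu_1 + \sum_{i=2}^n N^i \nu_i, \quad (\bm p - \bm q)_2 = \nu_1 + (N+1)\nu_2, \quad (\bm p - \bm q)_j = \nu_j \ (j \geq 3).$$
Because $\norm{\bm p - \bm q}$ dominates the absolute value of any single coordinate, it suffices to produce one coordinate whose absolute value is at least $N-2$.

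First I would organize the argument by the largest index $k$ with $\nu_k \neq 0$, which exists since the $\nu_i$ are not all zero. If $k = 1$, the first coordinate equals $(N-1)\nu_1$, of absolute value at least $N-1$. If $k = 2$, the second coordinate $\nu_1 + (N+1)\nu_2$ has absolute value at least $(N+1)\lvert \nu_2 \rvert - \lvert \nu_1 \rvert \geq (N+1) - 2 = N-1$. Both values already exceed $N-2$, so these cases are immediate, and they reveal that the roles of the entries $N-1$ and $N+1$ in the matrix are precisely to handle the two lowest indices.

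The remaining and essential case is $k \geq 3$, where I would bound the first coordinate. Isolating the leading term $N^k \nu_k$ and applying the triangle inequality gives
$$\lvert (\bm p - \bm q)_1 \rvert \geq N^k - 2\Bigl[(N-1) + \sum_{i=2}^{k-1} N^i\Bigr].$$
The main obstacle is that the crude geometric bound $\sum_{i=2}^{k-1} N^i < N^k/(N-1)$ only yields positivity of the right-hand side, not the quantitative lower bound $N-2$; the estimate must be kept sharp. I would resolve this by noting that the right-hand side is non-decreasing in $k$, since its increment from $k$ to $k+1$ is $N^{k+1} - 3N^k = N^k(N-3) \geq 0$ for $N \geq 3$. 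Hence its minimum over $k \geq 3$ occurs at $k = 3$, where it equals $N^3 - 2N^2 - 2N + 2$, and a direct check confirms $N^3 - 2N^2 - 2N + 2 \geq N-2$ for all $N \geq 3$. This closes the final case and in fact yields the slightly stronger bound $\norm{\bm p - \bm q} \geq N-1$.
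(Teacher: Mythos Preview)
Your argument is correct and in fact yields the slightly sharper bound $\norm{\bm p-\bm q}\ge N-1$. The organization differs from the paper's in two respects worth noting. First, the paper works exclusively with the first coordinate and cases on the \emph{smallest} index $i_0$ with $\lambda_{i_0}\neq\mu_{i_0}$; the key step is a base-$N$ divisibility observation (since $|\lambda_{i_0}-\mu_{i_0}|\le 2<N$, the integer $\sum_{i\ge i_0}(\lambda_i-\mu_i)N^{i-i_0}$ is not divisible by $N$, hence nonzero), which immediately gives $|p_1-q_1|\ge N-2$. Second, you instead case on the \emph{largest} index $k$ and, crucially, switch to the second coordinate when $k=2$, thereby exploiting the entry $N+1$ in the matrix; for $k\ge 3$ you replace the divisibility argument by a direct triangle-inequality bound together with the monotonicity $f(k+1)-f(k)=N^k(N-3)\ge 0$. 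Your route is a bit more computational but entirely elementary, while the paper's divisibility trick is slicker and uses only one coordinate; either way the estimate goes through.
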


\begin{proof}
  Let $\bm p = \matca{p_1}{\vdots}{p_n}$ and $\bm q = \matca{q_1}{\vdots}{q_n}$.
  Note that
  $$\norm{\bm p - \bm q} \geq |p_1 - q_1|.$$
  By the above observation,
  $$\bm p = \lambda_1 \bm d_1 + \cdots \lambda_n \bm d_n$$
  and
  $$\bm q = \mu_1 \bm d_1 + \cdots \mu_n \bm d_n$$
  for some $\lambda_1, \ldots, \lambda_n, \mu_1, \ldots, \mu_n \in \{0, 1, 2\}$.
  Then
  $$p_1 - q_1 = (\lambda_1 - \mu_1)(N-1) + (\lambda_2- \mu_2)N^2 + \cdots (\lambda_n - \mu_n)N^n.$$
  Since $\bm p \neq \bm q$, $\lambda_i \neq \mu_i$ for some $i$.
  Let $i_0$ be the smallest $i$ such that $\lambda_i \neq \mu_i$.

  If $i_0 = 1$,
  $$\begin{aligned}
    |p_1 - q_1| &\geq |(\lambda_1 - \mu_1)N + (\lambda_2- \mu_2)N^2 + \cdots + (\lambda_n - \mu_n)N^n| - |\lambda_1 - \mu_1| \\
    &\geq N |(\lambda_1 - \mu_1) + (\lambda_2- \mu_2)N + \cdots + (\lambda_n - \mu_n)N^{n-1}| - 2.\\
  \end{aligned}$$
  Note that the integer
  $$N' := (\lambda_1 - \mu_1) + (\lambda_2- \mu_2)N + \cdots + (\lambda_n - \mu_n)N^{n-1}$$
  is not divided by $N$ because $N \geq 3$ and $0 < |\lambda_1 - \mu_1| \leq 2$.
  In particular, $N' \neq 0$.
  Hence $N |N'| - 2 \geq N-2$.

  If $i_0 > 1$,
  $$\begin{aligned}
    |p_1 - q_1| &= | (\lambda_{i_0}- \mu_{i_0})N^{i_0} + \cdots +  (\lambda_n - \mu_n)N^n| \\
    &= N^{i_0} |(\lambda_{i_0} - \mu_{i_0}) + \cdots + (\lambda_n - \mu_n)N^{n-i_0}|.\\
  \end{aligned}$$
  The integer
  $$N'' := (\lambda_{i_0} - \mu_{i_0}) + \cdots + (\lambda_n - \mu_n)N^{n-i_0}$$
  is not $0$ by the similar argument to the above case.
  Hence $N^{i_0}|N''| \geq N > N-2$.
\end{proof}

The following corollary immediately follows.

\begin{cor}
  \label{slender}
  Assume that $n \geq 2$.
  Let $N$ be a positive integer.
  Then there exists an $n$-dimensional lattice polytope $P$ in $\mathbb R^n$ such that
  \begin{enumerate}[(1)]
    \item $P$ has an interior point, and
    \item for any distinct lattice points $\bm p, \bm q \in P$, $\norm{\bm p - \bm q} > N$.     
  \end{enumerate}
\end{cor}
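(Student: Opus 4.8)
The plan is to reduce directly to the preceding lemma, whose construction already delivers essentially everything we need. The polytope $P_1$ built there from the matrix $M(n,N)$ is an $n$-dimensional lattice polytope with an interior point, and its distinct lattice points are separated by Euclidean distance at least $N-2$ whenever $N \geq 3$. The only gap between that statement and the corollary is cosmetic: we want the strict bound $\norm{\bm p - \bm q} > N$ rather than $\geq N-2$, and we want it for an arbitrary positive integer $N$ rather than only for $N \geq 3$. Both issues are handled by feeding a suitably larger value of the parameter into the construction.

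Concretely, given a positive integer $N$, I would set $N_0 = N + 3$ and take $P$ to be the polytope $P_1$ associated to the matrix $M(n, N_0)$. Since $N \geq 1$, we have $N_0 \geq 4 \geq 3$, so the hypothesis $N_0 \geq 3$ of the preceding lemma is satisfied. It then remains to verify the two required properties. For property (1), recall that $\bm d_i = M(n,N_0) \bm e_i$, so that $P = \varphi(P_2)$, where $\varphi$ is the linear automorphism $\bm p \mapsto M(n,N_0)\bm p$; since $P_2$ is the full-dimensional cube $[0,2]^n$ with interior point $(1, \ldots, 1)$ and $\det M(n,N_0) = -1$, its image $P$ is an $n$-dimensional lattice polytope (lattice points being preserved by $\varphi$) possessing an interior point. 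For property (2), the preceding lemma yields, for any two distinct lattice points $\bm p, \bm q$ of $P$,
$$\norm{\bm p - \bm q} \geq N_0 - 2 = N + 1 > N,$$
which is exactly the desired strict inequality.

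I do not expect any genuine obstacle in this argument: the entire difficulty has already been absorbed into the explicit construction of $P_1$ and the distance estimate of the preceding lemma. The corollary is merely a repackaging of that lemma with the parameter shifted so that the bound becomes strict and the conclusion holds unconditionally for every positive integer $N$.
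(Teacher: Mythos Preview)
Your argument is correct and is exactly the intended one: the paper states that the corollary ``immediately follows'' from the preceding lemma, and the parameter shift $N_0 = N+3$ you wrote out is precisely what is needed to turn the bound $\geq N_0 - 2$ into the strict inequality $> N$ for an arbitrary positive integer $N$.
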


In the next proof, we use the following notation:
For a polytope $P \subset \mathbb R^n$,
$$D(P) := \max_{\bm p, \bm q \in P} \norm{\bm p - \bm q},$$
which is called the \textit{diameter} of $P$.

\begin{proof}[Proof of Theorem \ref{not fg}]
  By Lemma \ref{translation}, we may assume that $\mathbf 0 \not\in Z$.
  Note that the assumption that $Z$ is a proper closed subset of $\mathbb R^n$ that is unbounded in all rational directions is kept by any translation.

  Assume that $\mathbf E(Z)$ is finitely generated, say $\mathbf E(Z) = \lrangle{(f_1,g_1), \ldots, (f_k,g_k)}$.
  If $f_i = -\infty$ for some $i$, then $g_i |_Z = -\infty$, and hence $g_i= -\infty$.
  Then we may remove the pair $(f_i,g_i)$ from the generating set.
  Also, if $f_i$ is a monomial, by Lemma \ref{ubd2}, we have $f_i = g_i$.
  Hence we may remove the pair $(f_i,g_i)$ from the generating set.
  Therefore we may assume that each $f_i, g_i$ is neither $-\infty$ nor a monomial.
  It means that each $\Newt(f_i), \Newt(g_i)$ has at least two distinct lattice points.

  Let $N$ be a positive integer such that
  $$N > \max \{ D(\Newt(f_1)), D(\Newt(g_1)), \ldots, D(\Newt(f_k)), D(\Newt(g_k)) \}.$$
  By Lemma \ref{slender}, there exists an $n$-dimensional lattice polytope $P \subset \mathbb R^n$ such that
  \begin{enumerate}[(1)]
    \item $P$ has an interior point, and
    \item for any distinct lattice points $\bm p, \bm q \in P$, $\norm{\bm p - \bm q} > N$.     
  \end{enumerate}
  By Lemma \ref{any polytope}, there exists a pair $(h_1, h_2) \in \mathbf E(Z)$ such that $h_1 \neq h_2$ and $\Newt(h_1) = P$.
  Hence, by Lemma \ref{newt in newt 2}, at least one of $\Newt(f_1), \Newt(g_1), \ldots, \Newt(f_k), \Newt(g_k)$ is included in an integer translation of $P$.
  This contradicts to the condition (2) and the definition of $N$.
\end{proof}

\section{Reductions}

For a polyhedral complex $X$ in $\mathbb R^n$, we write as $\mathbf E(X) := \mathbf E(|X|)$.
Our next main result is the following.

\begin{thm}
  \label{main}
  Let $X$ be a tropical variety in $\mathbb R^n$.
  Then $\mathbf E(X)$ is finitely generated if and only if $|X|$ is a rational affine linear subspace of $\mathbb R^n$.
\end{thm}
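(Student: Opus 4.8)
The plan is to prove both implications of Theorem~\ref{main} separately: the ``if'' direction by exhibiting explicit generators after normalizing the subspace, and the ``only if'' direction (in contrapositive form) by reducing to the ambient space $\Aff(|X|)$ and invoking Theorem~\ref{not fg}.

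For the ``if'' direction, suppose $|X| = W$ is a rational affine linear subspace of dimension $d$. First I would use the translation isomorphism (Lemma~\ref{translation}) to assume $W$ passes through the origin, so that $W$ is a rational \emph{linear} subspace; this preserves finite generation. Since $\Lambda := W \cap \mathbb Z^n$ is then a saturated sublattice of $\mathbb Z^n$ of rank $d$, a matrix in $\mathrm{GL}_n(\mathbb Z)$ carries $\Lambda$ to $\mathbb Z^d \times \{\mathbf 0\}$, hence $W$ to the coordinate subspace $W' = \mathbb R^d \times \{\mathbf 0\}$. The pull-back by this matrix (Lemma~\ref{pull-buck}) is an automorphism of $\Txpmf$ sending $\mathbf E(W')$ to $\mathbf E(W)$, so it suffices to treat $W'$. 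For $W'$ I claim $\mathbf E(W') = \lrangle{(\overline{x_{d+1}}, \overline{0}), \ldots, (\overline{x_n}, \overline{0})}$. Each listed pair lies in $\mathbf E(W')$ because $x_j$ vanishes on $W'$ for $j>d$. Conversely, modulo this congruence each $\overline{x_j}$ (and, multiplying by its inverse monomial, each $\overline{x_j^{-1}}$) with $j>d$ is identified with the multiplicative unit $\overline{0}$, so every function is congruent to its specialization at $x_{d+1} = \cdots = x_n = 0$; two functions with the same restriction to $W'$ have equal specializations and are therefore congruent. Hence $\mathbf E(W')$ is finitely generated.

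For the converse I argue the contrapositive: if $|X|$ is not a rational affine linear subspace, then $\mathbf E(X)$ is not finitely generated. Since $|X|$ is a union of rational polyhedra, $W := \Aff(|X|)$ is the smallest rational affine subspace containing $|X|$; let $d = \dim W$. Via the identification $\iota : \mathbb R^d \xrightarrow{\sim} W$ of Lemma~\ref{trop var reduction}, $X' := \iota^{-1}(X)$ is a tropical variety in $\mathbb R^d$ with $\Aff(|X'|) = \mathbb R^d$. The pull-back $\iota^* : \Txpmf \to \mathbb T[z_1^{\pm}, \ldots, z_d^{\pm}]_{\mathrm{fcn}}$ is a surjective homomorphism (its image contains monomials whose exponents generate $\mathbb Z^d$, as $\Lambda$ is saturated) whose kernel congruence is $\mathbf E(W) \subseteq \mathbf E(X)$; since the image of a generating set under a surjection generates the image congruence, $\mathbf E(X)$ finitely generated would force $\mathbf E(X') = (\iota^* \times \iota^*)(\mathbf E(X))$ finitely generated. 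So it is enough to show $\mathbf E(X')$ is not finitely generated. Because $|X'|$ is not a rational affine subspace, $X'$ is neither a single point (excluded by connectedness in dimension $0$) nor all of $\mathbb R^d$ (excluded by Lemma~\ref{n in n}); thus $1 \le \dim X' \le d-1$, which forces $d \ge 2$, and $|X'|$ is a proper closed subset of $\mathbb R^d$.

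It remains to show $|X'|$ is unbounded in all rational directions, after which Theorem~\ref{not fg} applies and the proof is complete; this is the crux. The statement to establish is purely geometric: a balanced rational polyhedral complex of positive dimension whose affine hull is the whole space is unbounded in every rational direction. I would prove it by passing to the recession fan $\mathrm{rec}(X')$, a balanced fan of the same dimension whose support equals the set of recession directions of $|X'|$, so that unboundedness of $|X'|$ in a direction $\bm u$ is equivalent to the existence of $\bm v \in |\mathrm{rec}(X')|$ with $\bm u \cdot \bm v > 0$. If $|X'|$ were bounded above in some $\bm u \in \mathbb Z^n \setminus \{\mathbf 0\}$, then $|\mathrm{rec}(X')|$ would lie in the half-space $\{\bm u \cdot \bm v \le 0\}$, and the balancing condition forces it into the boundary hyperplane $\{\bm u \cdot \bm v = 0\}$. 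A localized balancing and connectedness argument at the face where $\bm u$ attains its maximum on $|X'|$ should then collapse $|X'|$ into an affine hyperplane with normal $\bm u$, contradicting $\Aff(|X'|) = \mathbb R^d$. I expect this final step---upgrading ``all recession directions lie in a hyperplane'' to ``$|X'|$ lies in a hyperplane'' and thereby excluding slab-like configurations---to be the main obstacle, since it is precisely where the balancing must be exploited beyond the recession fan.
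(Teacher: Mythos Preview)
Your ``if'' direction and your reduction to the ambient affine hull are correct and match the paper's argument up to cosmetic choices: the paper works with the orthogonal complement rather than a $\mathrm{GL}_n(\mathbb Z)$ change of coordinates, and it packages the reduction as Lemma~\ref{fg reduction} and Lemma~\ref{push EiZ}, but the content is the same. Your dimension count showing $d \ge 2$ is also the paper's.

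The genuine gap is exactly where you flag it. Your recession-fan route establishes that if $|X'|$ were bounded above in some rational direction $\bm u$, then $|\mathrm{rec}(X')|$ lies in the hyperplane $\{\bm u \cdot \bm v = 0\}$; but this does \emph{not} by itself contradict $\Aff(|X'|) = \mathbb R^d$, since a tropical fan of dimension $\le d-1$ can perfectly well sit inside a hyperplane. To finish you must show that boundedness of $\bm u \cdot (-)$ on $|X'|$ forces it to be constant, and that statement is essentially the same as the unboundedness you are trying to prove. The recession fan has not bought you anything here: the ``localized balancing and connectedness argument at the maximal face'' that you allude to is where all the work lies, and in dimension $\ge 2$ it is not clear how to run it directly, because balancing is imposed only along codimension-one faces.

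The paper bypasses this difficulty by a different mechanism. It proves the requisite unboundedness statement (Proposition~\ref{trop unbounded in f}: if $\bm a \mapsto \bm a \cdot \bm p$ is nonconstant on $|X|$ then it is unbounded above) in two steps. First, in dimension~$1$ it runs exactly the path-following argument you have in mind, using balancing at vertices (Lemma~\ref{1dim balance}, Proposition~\ref{1dim ubd}). Second, for $\dim X \ge 2$ it reduces to dimension~$1$ by taking the \emph{stable intersection} of $X$ with a suitably chosen rational affine subspace $W$ of complementary dimension plus one (Lemma~\ref{stable insec}): the connected component through a chosen segment is a $1$-dimensional tropical variety on which the linear functional is already nonconstant, and the $1$-dimensional case applies. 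This stable-intersection trick is the key idea your proposal is missing; it replaces the higher-dimensional local balancing analysis you anticipated needing with an appeal to a standard structural result.
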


One may consider that we can use Theorem \ref{not fg} in the proof of the only if part.
However, in general, $|X|$ is not unbounded in all rational directions.
In particular, if $|X|$ is included in an proper rational affine linear subspace of $\mathbb R^n$, then clearly $|X|$ is not unbounded in all rational directions.
In such cases, we need to reduce the ambient space.
Note that we may assume that $\mathbf 0 \in \Aff(|X|)$ (i.e. $\Aff(|X|)$ is a linear subspace of $\mathbb R^n$) because the translation map is an isomorphism.

Let $W$ be a $d$-dimensional rational linear subspace of $\mathbb R^n$.
Let $\{ \bm q_1, \ldots, \bm q_d \}$ be a basis of $W \cap \mathbb Z^n$.
The map
$$\iota : \mathbb R^d \to W, \qquad \matca {c_1}{\vdots}{c_d} \mapsto c_1\bm q_1 + \cdots c_d \bm q_d$$
is a linear isomorphism.
Let $\Typmf = \mathbb T[y_1^{\pm}, \ldots, y_d^{\pm}]_{\mathrm{fcn}}$.
Then $\iota$ induces the pull-back map $\iota^* : \Txpmf \to \Typmf, \ f \mapsto f \circ \iota$ (see Lemma \ref{pull-buck}).

\begin{lem}
  The map $\iota^*$ is surjective.
\end{lem}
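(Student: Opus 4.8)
I want to show that the pull-back map $\iota^* : \Txpmf \to \Typmf$ is surjective, where $\iota : \mathbb R^d \to W$ is the linear isomorphism sending $\bm c$ to $c_1 \bm q_1 + \cdots + c_d \bm q_d$ and $\{ \bm q_1, \ldots, \bm q_d \}$ is a basis of $W \cap \mathbb Z^n$. Since $\iota^*$ is already known to be a semiring homomorphism by Lemma \ref{pull-buck}, and since every element of $\Typmf$ is built from the generators $\overline{y_j}$ and constants using $\oplus$ and $\odot$, it suffices to show that each generator $\overline{y_j}$ lies in the image of $\iota^*$. More precisely, the plan is to exhibit, for each $j = 1, \ldots, d$, an explicit function $f_j \in \Txpmf$ with $\iota^*(f_j) = \overline{y_j}$; constants are handled trivially since $\iota^*$ preserves constants.

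**The main computation.**
The function $\overline{y_j}$ is the coordinate function $\bm c \mapsto c_j$ on $\mathbb R^d$. First I would invert the relation defining $\iota$. Since $\{ \bm q_1, \ldots, \bm q_d \}$ is a basis of $W \cap \mathbb Z^n$ and $\iota$ is a linear isomorphism onto $W$, I can find a $\mathbb Z$-linear functional on $\mathbb Z^n$ (or on $W \cap \mathbb Z^n$) that recovers the $j$-th coordinate $c_j$ from the point $\iota(\bm c) \in W$. The key point is that I need this functional to be integer-valued on $\mathbb Z^n$, i.e. represented by an integer row vector $\bm a_j \in \mathbb Z^n$, so that the monomial $\bm x^{\bm a_j} \in \Txpm$ defines a function in $\Txpmf$. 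Concretely, I would set $f_j = \overline{\bm x^{\bm a_j}}$ for a suitable $\bm a_j \in \mathbb Z^n$, and then check that $\iota^*(f_j)(\bm c) = \overline{\bm x^{\bm a_j}}(\iota(\bm c)) = \bm a_j \cdot (c_1 \bm q_1 + \cdots + c_d \bm q_d) = c_j$, which is exactly $\overline{y_j}(\bm c)$.

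**Producing the integer functional — the main obstacle.**
The crux is finding an integer vector $\bm a_j \in \mathbb Z^n$ satisfying $\bm a_j \cdot \bm q_i = \delta_{ij}$ (Kronecker delta) for all $i$. This is precisely a statement about extending a dual basis: I have a primitive sublattice $W \cap \mathbb Z^n$ of $\mathbb Z^n$ with $\mathbb Z$-basis $\bm q_1, \ldots, \bm q_d$, and I want integer vectors pairing with this basis in the standard dual way. The reason this works is that a basis of a direct summand (primitive sublattice) of $\mathbb Z^n$ can be completed to a basis $\bm q_1, \ldots, \bm q_n$ of all of $\mathbb Z^n$; taking $\bm a_j$ to be the $j$-th vector of the dual basis of $\mathbb Z^n$ (which is again integral, since the change-of-basis matrix is in $\mathrm{GL}_n(\mathbb Z)$) gives $\bm a_j \cdot \bm q_i = \delta_{ij}$ for all $i \le d$. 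The fact that $W \cap \mathbb Z^n$ is a primitive sublattice, hence a direct summand whose basis extends to a $\mathbb Z^n$-basis, is a standard fact about free abelian groups; I expect it to be among the appendix lemmas on free abelian groups that the introduction promises. Once $\bm a_j$ is in hand the rest is the routine verification above, so the entire weight of the argument rests on this integrality/basis-extension step.
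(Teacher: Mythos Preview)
Your proposal is correct and follows essentially the same route as the paper: extend $\{\bm q_1,\ldots,\bm q_d\}$ to a $\mathbb Z$-basis of $\mathbb Z^n$ (this is exactly Lemma~\ref{app2}), take the dual basis (the rows of the inverse matrix), and verify $\iota^*(\overline{\bm x^{\bm a_j}})=\overline{y_j}$. One small imprecision: since $\Typmf$ is the \emph{Laurent} polynomial function semiring, its generators over constants are $\overline{y_j}$ \emph{and} $\overline{y_j^{-1}}$; the paper notes explicitly that $\iota^*(\overline{\bm x^{-\bm a_j}})=\overline{y_j^{-1}}$, which is immediate from your construction.
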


\begin{proof}
  The basis $\{ \bm q_1, \ldots, \bm q_d \}$ of $W \cap \mathbb Z^n$ can be extended to a basis $\{ \bm q_1, \ldots, \bm q_n \}$ of $\mathbb Z^n$ (see Lemma \ref{app2}).
  We now regard vectors in $\mathbb R^n$ as column vectors.
  Consider the matrix $A = \matac{\bm q_1}{\cdots}{\bm q_n}$.
  Then $\det A = \pm 1$, and hence there is the inverse $A^{-1}$, which is an integer matrix.
  Let $\bm u_i$ be the $i$-th row of $A^{-1}$ for each $i$.
  Since $A^{-1}A=E$, we have $\bm u_i \bm q_j = \delta_{ij}$, where $\delta_{ij}$ is the Kronecker delta.
  Let us compute $\iota^*(\overline{\bm x^{\bm u_i}})$ for $i=1, \ldots, d$:
  $$\begin{aligned}
    \iota^*(\overline{\bm x^{\bm u_i}})(\bm p) &= \overline{\bm x^{\bm u_i}}(\iota(\bm p)) \\
    &= \overline{\bm x^{\bm u_i}}(\matac{\bm q_1}{\cdots}{\bm q_d}\bm p)\\
    &= \bm u_i \matac{\bm q_1}{\cdots}{\bm q_d}\bm p\\
    &= {}^t\!\bm e_i \bm p\\
    &= (\text{the $i$-th entry of $\bm p$}),
  \end{aligned}$$
  where $\{ \bm e_1, \ldots \bm e_d \}$ is the standard basis of $\mathbb R^d$.
  Hence $\iota^*(\overline{\bm x^{\bm u_i}}) = \overline{y_i}$.
  This also means $\iota^*(\overline{\bm x^{-\bm u_i}}) = \overline{y_i^{-1}}$.
  Therefore $\iota^*$ is surjective.
\end{proof}

We recall the notion of push-forwards of congruences.
Let $\varphi : R_1 \to R_2$ be a semiring homomorphism and let $E$ be a congruence on $R_1$.
We denote
$$\varphi(E) = \{ (\varphi(f), \varphi(g)) \in R_2^2 \ | \ (f,g) \in E \}.$$
The set $\varphi(E)$ is not a congruence on $R_2$ in general.
We denote $\varphi_*(E)$ the congruence generated by $\varphi(E)$.

\begin{lem}[{\cite[Corollary 2.15]{bertram2017tropical}}]
  \label{BE cor 2.15}
  Suppose $\varphi : R_1 \to R_2$ is a surjective semiring morphism and $E$ is a congruence on $R_1$.
  If $E = \lrangle{(a_1,b_1),\ldots, (a_k, b_k)}$, then
  $$\varphi_*(E) = \lrangle{(\varphi(a_1), \varphi(b_1)), \ldots, (\varphi(a_k), \varphi(b_k))}.$$
\end{lem}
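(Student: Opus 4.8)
The plan is to prove equality of the two congruences on $R_2$ by establishing both inclusions, where I write $F := \lrangle{(\varphi(a_1), \varphi(b_1)), \ldots, (\varphi(a_k), \varphi(b_k))}$ for the right-hand side. The inclusion $F \subseteq \varphi_*(E)$ is immediate: each generator $(\varphi(a_i), \varphi(b_i))$ lies in $\varphi(E)$ because $(a_i, b_i) \in E$, and $\varphi_*(E)$ is a congruence containing $\varphi(E)$; since $F$ is the smallest congruence containing these generators, $F \subseteq \varphi_*(E)$.

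For the reverse inclusion $\varphi_*(E) \subseteq F$, I would reduce to showing $\varphi(E) \subseteq F$, which is enough because $\varphi_*(E)$ is the smallest congruence containing $\varphi(E)$ and $F$ is already a congruence. So I fix an arbitrary $(f,g) \in E$ and aim to place $(\varphi(f), \varphi(g))$ in $F$. Here I invoke Lemma \ref{JM gen} for $E = \lrangle{(a_1,b_1), \ldots, (a_k,b_k)}$: there is a finite transitive chain $(f,r_1), (r_1, r_2), \ldots, (r_n, g)$ all of whose links lie in the set $\mathcal S$ of pairs displayed in that lemma. Since $F$ is an equivalence relation and hence transitive, it suffices to show that $\varphi$ carries each link into $F$.

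The key step is then a direct computation transporting an $\mathcal S$-pair across $\varphi$. A link $(s,t) \in \mathcal S$ has the form $s = \sum_{\bm m, \bm n} r_{\bm m, \bm n} \bm a^{\bm m} \bm b^{\bm n}$ and $t = \sum_{\bm m, \bm n} r_{\bm m, \bm n} \bm a^{\bm n} \bm b^{\bm m}$ with all but finitely many $r_{\bm m, \bm n} \in R_1$ equal to zero. Because $\varphi$ is a semiring homomorphism, it commutes with these finite sums and products, giving
$$\varphi(s) = \sum_{\bm m, \bm n} \varphi(r_{\bm m, \bm n}) \varphi(a_1)^{m_1} \cdots \varphi(a_k)^{m_k} \varphi(b_1)^{n_1} \cdots \varphi(b_k)^{n_k},$$
and $\varphi(t)$ is the analogous expression with the roles of $\bm m$ and $\bm n$ exchanged. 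As $\varphi(0) = 0$, only finitely many coefficients $\varphi(r_{\bm m, \bm n}) \in R_2$ are nonzero, so this is a legitimate pair of the shape described by Lemma \ref{JM gen} for the generating set $(\varphi(a_1), \varphi(b_1)), \ldots, (\varphi(a_k), \varphi(b_k))$ of $F$. Thus $(\varphi(s), \varphi(t))$ lies in the corresponding set $\mathcal S'$ for $F$, hence in $F$. Applying this to each link and using transitivity of $F$ gives $(\varphi(f), \varphi(g)) \in F$, which completes the argument.

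I do not anticipate a serious obstacle: the whole argument simply transports the chain characterization of Lemma \ref{JM gen} through the homomorphism $\varphi$. The only point needing care is the bookkeeping in the displayed identity---checking that $\varphi$ sends an $\mathcal S$-pair for $E$ to an $\mathcal S'$-pair for $F$---which rests on nothing more than $\varphi$ respecting addition, multiplication, and $0$. Notably, surjectivity of $\varphi$ is not used in this inclusion argument; it appears in the hypothesis to match the setting in which the cited result is originally stated.
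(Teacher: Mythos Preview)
Your argument is correct. Both inclusions are handled cleanly: the easy direction $F \subseteq \varphi_*(E)$ uses only that $\varphi_*(E)$ is a congruence containing the images of the generators, and the substantive direction $\varphi_*(E) \subseteq F$ correctly reduces to $\varphi(E) \subseteq F$ and then transports the chain description of Lemma~\ref{JM gen} through the homomorphism $\varphi$. The bookkeeping step---that $\varphi$ carries an $\mathcal S$-pair for $E$ to an $\mathcal S'$-pair for $F$---is exactly the use of $\varphi$ preserving sums, products, and $0$, as you say. Your observation that surjectivity is not actually invoked is also accurate for this formulation of $\varphi_*(E)$ as the congruence generated by $\varphi(E)$.

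As for comparison with the paper: there is nothing to compare. The paper does not supply a proof of this lemma; it is quoted directly from \cite[Corollary~2.15]{bertram2017tropical} and used as a black box. Your write-up therefore goes beyond what the paper does, and it is a perfectly good self-contained proof drawing on the chain characterization already stated in the paper as Lemma~\ref{JM gen}.
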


\begin{lem}
  \label{push EiZ}
  Let $Z \subset W$ be any subset.
  Then $(\iota^*)_*(\mathbf E(Z)) = \mathbf E(\iota^{-1}(Z))$.
\end{lem}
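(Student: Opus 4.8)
The plan is to prove the two inclusions $(\iota^*)_*(\mathbf E(Z)) \subset \mathbf E(\iota^{-1}(Z))$ and $(\iota^*)_*(\mathbf E(Z)) \supset \mathbf E(\iota^{-1}(Z))$ separately, exploiting the fact that $\iota^*$ is a surjective homomorphism (established in the previous lemma) and that $\iota : \mathbb R^d \to W$ is a bijection. The crucial elementary observation is that for any $f \in \Txpmf$ and any $\bm p \in \mathbb R^d$, we have $\iota^*(f)(\bm p) = f(\iota(\bm p))$ by the definition of the pull-back. Hence for a pair $(f,g)$, the condition $\iota^*(f)|_{\iota^{-1}(Z)} = \iota^*(g)|_{\iota^{-1}(Z)}$ is equivalent to $f|_{\iota(\iota^{-1}(Z))} = g|_{\iota(\iota^{-1}(Z))}$, and since $Z \subset W$ and $\iota$ is a bijection onto $W$, we have $\iota(\iota^{-1}(Z)) = Z$. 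This gives the clean translation between the two congruences.

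First I would show $\iota^*(\mathbf E(Z)) \subset \mathbf E(\iota^{-1}(Z))$. Take $(f,g) \in \mathbf E(Z)$, so $f|_Z = g|_Z$. For any $\bm p \in \iota^{-1}(Z)$ we have $\iota(\bm p) \in Z$, so $\iota^*(f)(\bm p) = f(\iota(\bm p)) = g(\iota(\bm p)) = \iota^*(g)(\bm p)$, whence $(\iota^*(f), \iota^*(g)) \in \mathbf E(\iota^{-1}(Z))$. Since $\mathbf E(\iota^{-1}(Z))$ is a congruence and hence in particular closed under the operations, the congruence $(\iota^*)_*(\mathbf E(Z))$ generated by $\iota^*(\mathbf E(Z))$ is contained in $\mathbf E(\iota^{-1}(Z))$; this uses that $\mathbf E(\iota^{-1}(Z))$ is itself a congruence containing the generating set, so it contains the congruence it generates.

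The reverse inclusion is where the surjectivity of $\iota^*$ does the real work. Take $(F,G) \in \mathbf E(\iota^{-1}(Z))$, so $F|_{\iota^{-1}(Z)} = G|_{\iota^{-1}(Z)}$. By surjectivity choose $f,g \in \Txpmf$ with $\iota^*(f) = F$ and $\iota^*(g) = G$. The plan is to verify that $(f,g) \in \mathbf E(Z)$: for any $\bm q \in Z$, write $\bm q = \iota(\bm p)$ with $\bm p = \iota^{-1}(\bm q) \in \iota^{-1}(Z)$ (here I use that $Z \subset W$ so that every point of $Z$ really is in the image of $\iota$), and then $f(\bm q) = f(\iota(\bm p)) = \iota^*(f)(\bm p) = F(\bm p) = G(\bm p) = g(\bm q)$. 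Thus $(f,g) \in \mathbf E(Z)$, so $(F,G) = (\iota^*(f), \iota^*(g)) \in \iota^*(\mathbf E(Z)) \subset (\iota^*)_*(\mathbf E(Z))$.

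The main obstacle, such as it is, is bookkeeping rather than mathematics: one must be careful that the hypothesis $Z \subset W$ is genuinely used in the reverse inclusion to guarantee $\iota(\iota^{-1}(Z)) = Z$ (without it, only $\iota(\iota^{-1}(Z)) = Z \cap W$ holds, and the argument breaks). I would also note that no appeal to Lemma \ref{BE cor 2.15} is needed here — that lemma is about computing explicit generators of a push-forward, whereas the present statement is a set-theoretic identity of congruences that follows directly from the bijectivity of $\iota$ and the surjectivity of $\iota^*$. The whole proof is therefore a short chain of equalities $\iota^*(f)(\bm p) = f(\iota(\bm p))$ applied on $\iota^{-1}(Z)$, combined with the remark that a congruence generated by a subset of a congruence is contained in that congruence.
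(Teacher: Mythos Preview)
Your proof is correct and follows essentially the same approach as the paper: both inclusions are established via the identity $\iota^*(f)(\bm p) = f(\iota(\bm p))$, with surjectivity of $\iota^*$ used to lift a pair in $\mathbf E(\iota^{-1}(Z))$ to a pair in $\mathbf E(Z)$. Your additional remarks about where $Z \subset W$ is used and why Lemma~\ref{BE cor 2.15} is unnecessary are accurate and match the paper's remark that in fact $\iota^*(\mathbf E(Z)) = \mathbf E(\iota^{-1}(Z))$.
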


\begin{proof}
  To show the inclusion $\subset$, it is enough to show that $\iota^*(\mathbf E(Z)) \subset \mathbf E(\iota^{-1}(Z))$.
  Let $(f,g) \in \mathbf E(Z)$ be any pair and let $\bm p \in \iota^{-1}(Z)$ be any point.
  Then
  $$\iota^*(f)(\bm p) = f(\iota(\bm p)) = g(\iota(\bm p)) = \iota^*(g)(\bm p).$$
  Hence $(\iota^*(f), \iota^*(g)) \in \mathbf E(\iota^{-1}(Z))$, which means that $\iota^*(\mathbf E(Z)) \subset \mathbf E(\iota^{-1}(Z))$.

  To show the converse, take any pair $(f,g) \in \mathbf E(\iota^{-1}(Z))$.
  Since $\iota^*$ is surjective, there exist $f_1, g_1 \in \Txpmf$ such that $\iota^*(f_1) = f$ and $\iota^*(g_1) = g$.
  We claim that $(f_1, g_1) \in \mathbf E(Z)$.
  Indeed, let $\bm p \in Z$ be any point and take $\bm q \in \iota^{-1}(Z)$ such that $\iota(\bm q) = \bm p$.
  Since $(f,g) \in \mathbf E(\iota^{-1}(Z))$, $f(\bm q) = g(\bm q)$.
  Thus
  $$f_1(\bm p) = f(\bm q) = g(\bm q) = g_1(\bm p).$$
  Hence $(f_1, g_1) \in \mathbf E(Z)$.
  Therefore
  \[ (f,g) = (\iota^*(f_1), \iota^*(g_1)) \in \iota^*(\mathbf E(Z)) \subset (\iota^*)_*(\mathbf E(Z)). \qedhere \]
\end{proof}

\begin{rem}
  The above proof also shows that $\iota^*(\mathbf E(Z)) = \mathbf E(\iota^{-1}(Z))$.
  In particular, $\iota^*(\mathbf E(Z))$ is already a congruence.
\end{rem}

\begin{lem}
  \label{fg reduction}
  Let $Z \subset W$ be any subset.
  If $\mathbf E(Z)$ is finitely generated, then $\mathbf E(\iota^{-1}(Z))$ is also finitely generated.
\end{lem}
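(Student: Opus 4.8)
The plan is to recognize that this lemma follows immediately by chaining together the two preceding results, provided we feed them the correct data. First I would assume $\mathbf E(Z)$ is finitely generated and fix a finite generating set, say $\mathbf E(Z) = \lrangle{(a_1,b_1), \ldots, (a_k, b_k)}$. The structural fact that makes everything run is that $\iota^* : \Txpmf \to \Typmf$ is a \emph{surjective} semiring homomorphism, which was established in the lemma just above. This is exactly the hypothesis required by Lemma \ref{BE cor 2.15}, so I would apply that result verbatim to conclude
$$(\iota^*)_*(\mathbf E(Z)) = \lrangle{(\iota^*(a_1), \iota^*(b_1)), \ldots, (\iota^*(a_k), \iota^*(b_k))}.$$
In particular, the push-forward congruence $(\iota^*)_*(\mathbf E(Z))$ is finitely generated, being generated by the $k$ pairs $(\iota^*(a_i), \iota^*(b_i))$.

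The second step is to identify this push-forward with the congruence whose finite generation we actually want. By Lemma \ref{push EiZ}, we have $(\iota^*)_*(\mathbf E(Z)) = \mathbf E(\iota^{-1}(Z))$. Combining this with the previous step shows that $\{ (\iota^*(a_i), \iota^*(b_i)) \}_{i=1}^{k}$ is a finite generating set of $\mathbf E(\iota^{-1}(Z))$, which is precisely the desired conclusion.

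In terms of difficulty, there is essentially no obstacle remaining at this stage: all the substantive content has already been discharged in the two lemmas being invoked, namely the surjectivity of $\iota^*$ and the push-forward identity $(\iota^*)_*(\mathbf E(Z)) = \mathbf E(\iota^{-1}(Z))$. The only point requiring any attention is to verify that the hypotheses of Lemma \ref{BE cor 2.15} are genuinely satisfied, i.e. that $\iota^*$ is surjective; this is exactly why the surjectivity of $\iota^*$ was proved beforehand. Once surjectivity is in hand, Lemma \ref{BE cor 2.15} transports finite generation along $\iota^*$, and Lemma \ref{push EiZ} certifies that the transported congruence is exactly $\mathbf E(\iota^{-1}(Z))$, so the proof is complete.
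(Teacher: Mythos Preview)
Your proposal is correct and matches the paper's approach exactly: the paper's proof is the single sentence ``This follows from Lemma \ref{BE cor 2.15} and Lemma \ref{push EiZ},'' and you have simply unpacked that sentence, correctly noting along the way that the surjectivity of $\iota^*$ (established just before) is what licenses the application of Lemma \ref{BE cor 2.15}.
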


\begin{proof}
  This follows from Lemma \ref{BE cor 2.15} and Lemma \ref{push EiZ}.
\end{proof}

Hence, for a subset $Z \subset W$, in order to show that $\mathbf E(Z)$ is not finitely generated, it is enough to show that $\mathbf E(\iota^{-1}(Z))$ is not finitely generated.
Thus we may assume that $Z$ is not included in any proper rational affine linear subspace of $\mathbb R^n$.
Note that if $Z$ is the support of a tropical variety in $\mathbb R^n$, then so is $\iota^{-1}(Z)$ by Lemma \ref{trop var reduction}.

\section{Proof of Theorem \ref{main}}

\subsection{Proof of the if part of Theorem \ref{main}}

Let $X$ be a tropical variety in $\mathbb R^n$.
Assume that $W := |X|$ is a $d$-dimensional rational affine linear subspace of $\mathbb R^n$.
By Lemma \ref{translation}, we may assume that $\mathbf 0 \in W$, i.e., $W$ is a linear subspace of $\mathbb R^n$.
Since $W$ is rational, the orthogonal complement $W'$ (with respect to the standard inner product) of $W$ is also rational.
We use the notation $\Lambda(W') = W' \cap \mathbb Z^n$.
Let $\{ \bm q_1, \ldots, \bm q_{n-d} \}$ be a basis of $\Lambda(W')$.
We see that the congruence $\mathbf E(W)$ is generated by the finite set $S := \left\{ \left(\overline{\bm x^{\bm q_i}}, 0 \right) \ | \ i=1, \ldots, n-d \right\}$.

Note that by Lemma \ref{app2}, $\Lambda(W')$ is a direct summand of $\mathbb Z^n$.
Thus we fix a subgroup $H$ of $\mathbb Z^n$ with $\mathbb Z^n = H \oplus \Lambda(W')$.

For any $\bm p \in W$ and $i = 1, \ldots, n-d$,
$$\overline{\bm x^{\bm q_i}}(\bm p) = \bm q_i \cdot \bm p = 0.$$
Hence $S \subset \mathbf E(W)$, and then $\lrangle{S} \subset \mathbf E(W)$.
Conversely, take any $(f,g) \in \mathbf E(W)$.
Let
$$f = \overline{\bigoplus_{\bm u} a_{\bm u}{\bm x}^{\bm u}}.$$
Fix any $\bm u \in \mathbb Z^n$.
Since $\mathbb Z^n = H \oplus \Lambda(W')$, $\bm u = \bm p + \bm q$ for some $\bm p \in H$ and $\bm q \in \Lambda(W')$.
Let $\bm q = c_1 \bm q_1 + \cdots + c_{n-d} \bm q_{n-d} \ (c_1, \ldots, c_{n-d} \in \mathbb Z)$.
Note that
$$\overline{\bm x^{\bm q_i}} \sim_{\lrangle{S}} 0$$
for any $i$.
Hence
$$\overline{\bm x^{\bm u}} = \overline{\bm x^{\bm p} \odot \bm x^{c_1\bm q_1 + \cdots + c_{n-d} \bm q_{n-d}}} \sim_{\lrangle{S}} \overline{\bm x^{\bm p}}.$$
By applying this argument for all $\bm u$, it is shown that there exists a function $f_0 \in \Txpmf$ such that $f \sim_{\lrangle{S}} f_0$ and $f_0$ is of the form
$$\overline{\bigoplus_{\bm p \in H} a_{\bm p} \bm x^{\bm p}}.$$
Similarly, there exists a function $g_0 \in \Txpmf$ such that $g \sim_{\lrangle{S}} g_0$ and $g_0$ is of the form
$$\overline{\bigoplus_{\bm p \in H} b_{\bm p} \bm x^{\bm p}}.$$
Since $\lrangle{S} \subset \mathbf E(W)$ and $(f,g) \in \mathbf E(W)$, we have
$$f_0|_W = f|_W = g|_W = g_0|_W.$$

Now, we show that $f_0=g_0$ as functions on $\mathbb R^n$.
To see this, let $V$ be the linear subspace of $\mathbb R^n$ generated by $H$, and $V'$ the orthogonal complement of $V$.
We claim that $\mathbb R^n = W \oplus V'$.
Indeed, since $\mathbb R^n = W' \oplus V$, we have
$$(W + V')^{\perp} = W' \cap V = \{ \mathbf 0 \}$$
and
$$(W \cap V')^{\perp} = W' + V = \mathbb R^n,$$
where $\square^{\perp}$ means the orthogonal complement of $\square$.
These mean that $W+V' = \mathbb R^n$ and $W \cap V' = \{ \mathbf 0 \}$, hence $\mathbb R^n = W \oplus V'$.

Take any $\bm v \in \mathbb R^n$.
Then $\bm v = \bm v_1 + \bm v_2$ for some $\bm v_1 \in W$ and $\bm v_2 \in V'$.
Note that, for any $\bm p \in H$,
$$\overline{\bm x^{\bm p}}(\bm v) = \bm p \cdot (\bm v_1 + \bm v_2) = \bm p \cdot \bm v_1 = \overline{\bm x^{\bm p}}(\bm v_1).$$
Hence
$$f_0(\bm v) = f_0(\bm v_1) = g_0(\bm v_1) = g_0(\bm v).$$
Here, the second equality holds because $f_0|_W = g_0|_W$.
Therefore $f_0 = g_0$ as functions on $\mathbb R^n$.
Thus $f \sim_{\lrangle{S}} f_0 = g_0 \sim_{\lrangle{S}} g$, which means that $\mathbf E(W) \subset \lrangle{S}$.
This completes the proof of the if part of Theorem \ref{main}.

\subsection{Proof of the only if part of Theorem \ref{main}}

We need the following proposition.

\begin{prop}
  \label{trop unbounded}
  Let $X$ be a tropical variety in $\mathbb R^n$.
  Assume that $|X|$ is not included in a proper rational affine linear subspace of $\mathbb R^n$.
  Then $|X|$ is unbounded in all rational directions.
\end{prop}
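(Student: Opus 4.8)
The plan is to prove the contrapositive: if $|X|$ fails to be unbounded in some rational direction, then $|X|$ lies in a proper rational affine subspace. Concretely, suppose there is $\bm u \in \mathbb Z^n\setminus\{\mathbf 0\}$ such that the linear functional $\ell(\bm p)=\bm u\cdot\bm p$ is bounded above on $|X|$, say by $M:=\sup_{\bm p\in|X|}\ell(\bm p)<\infty$. I would then establish a ``tropical maximum principle'': $\ell$ is in fact constant, equal to $M$, on $|X|$. Since $\bm u$ is a nonzero integer vector, $\{\ell=M\}$ is a proper rational affine hyperplane, so this contradicts the hypothesis that $|X|$ lies in no proper rational affine subspace, and the proposition follows. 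A preliminary observation is that $M$ is attained: the restriction of $\ell$ to each facet $\sigma$ is an affine function bounded above on the polyhedron $\sigma$, hence attains its maximum there by linear programming, and there are only finitely many facets.

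The engine of the maximum principle is a local computation using the balancing condition. Suppose $\tau\in X$ is a ridge (a $(d-1)$-dimensional face) with $\tau\subseteq\{\ell=M\}$. Then $\ell\equiv M$ on $\tau$, so $\bm u\perp L(\tau)$ and in particular $\ell(\Lambda(\tau))=\{0\}$. Applying $\ell$ to the balancing identity $\sum_{\sigma\succ\tau}\omega_X(\sigma)\,\bm u_{\sigma/\tau}\in\Lambda(\tau)$ yields
\[
  \sum_{\sigma\succ\tau}\omega_X(\sigma)\,\ell(\bm u_{\sigma/\tau})=0.
\]
For each facet $\sigma\succ\tau$ the containment $\sigma\subseteq\tau+\mathbb R_{\ge 0}\bm u_{\sigma/\tau}$, together with $\ell\le M$ on $\sigma$ and $\ell\equiv M$ on $\tau$, forces $\ell(\bm u_{\sigma/\tau})\le 0$; since the weights $\omega_X(\sigma)$ are positive and the sum vanishes, every term must vanish, i.e. $\ell(\bm u_{\sigma/\tau})=0$. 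As $L(\sigma)=L(\tau)+\mathbb R\bm u_{\sigma/\tau}$, this gives $\bm u\perp L(\sigma)$, so $\ell\equiv M$ on every facet containing $\tau$. Thus flatness of $\ell$ at the top level $M$ propagates across any ridge lying in $\{\ell=M\}$.

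To run this globally I would first reduce the codimension. Let $\rho\in X$ be any face with $\rho\subseteq\{\ell=M\}$ (one exists because $M$ is attained). Since $\bm u\perp L(\rho)$, the functional $\ell-M$ descends to the quotient $\mathbb R^n/L(\rho)$, where the star of $\rho$ becomes a balanced fan of dimension $d-\dim\rho$ carrying a linear functional that is $\le 0$ and vanishes at the cone point. A maximum principle for balanced fans, namely that a positive-dimensional balanced fan contained in a closed half-space lies in its boundary hyperplane, then shows this functional is identically $0$, i.e. every facet $\sigma\succeq\rho$ satisfies $\ell\equiv M$. Finally I would spread flatness over all of $X$ using connectedness: given one flat facet, any member of $X$ is joined to it by a chain $\sigma_0,\sigma_1,\dots$ of comparable faces; at each step a flat $\sigma_i$ has all of its faces contained in $\{\ell=M\}$, so the displayed reduction makes the next facet in the chain flat as well. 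Hence $|X|\subseteq\{\ell=M\}$.

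The main obstacle is the fan maximum principle invoked above, equivalently, proving the engine computation in arbitrary codimension. The clean one-ridge argument handles a top face of codimension one, but choosing $\rho$ to be a maximal-dimensional flat face pushes one into exactly the hard case: in the star of $\rho$ the descended functional is strictly negative off the cone point, so no ridge of the star lies in the zero hyperplane and the one-ridge computation cannot be applied directly. I expect to resolve this by induction on the dimension $d-\dim\rho$ of the star, the base case $d-\dim\rho=1$ being precisely the ridge computation; alternatively, one may pass to the recession fan and use that the pushforward of a positive-dimensional balanced fan under the integer-linear map $\ell$ is again balanced and hence surjects onto $\mathbb R$, which is incompatible with $\ell$ being bounded above. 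It is worth noting that balancing already forbids ``boundary ridges''—a ridge lying in a single facet $\sigma$ would force $\omega_X(\sigma)\bm u_{\sigma/\tau}\in\Lambda(\tau)$, which is impossible since $\bm u_{\sigma/\tau}\notin L(\tau)$—and this is the structural reason flatness cannot terminate partway through $|X|$.
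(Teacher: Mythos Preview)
Your contrapositive strategy—showing that an upper bound $M$ on $\ell$ forces $\ell\equiv M$ on $|X|$—is a legitimate alternative to the paper's argument, and the ridge computation you give is exactly the right local mechanism. The paper proceeds quite differently: it reformulates the claim as ``$\ell$ not constant $\Rightarrow$ $\ell$ unbounded above'', proves this directly for one-dimensional tropical varieties by walking along edges and invoking balancing at each vertex (Lemma~\ref{1dim balance} and Proposition~\ref{1dim ubd}), and then reduces an arbitrary $d$-dimensional $X$ to the curve case by taking the stable intersection $X\cap_{st}\{W\}$ with a rational affine $(n-d+1)$-plane $W$ containing a segment on which $\ell$ varies. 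Your maximum-principle route is conceptually appealing, and with the ridge step in hand the spreading-by-connectedness argument is fine; the entire weight falls, as you correctly say, on the fan maximum principle for stars of codimension greater than one.

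Here the proposal has a real gap. Your second proposed resolution—pushforward of the positive-dimensional balanced fan under $\ell$—does not work as stated: the tropical pushforward of a $k$-cycle under a linear map to $\mathbb R$ is a $k$-cycle in $\mathbb R$, hence zero whenever $k\ge 2$, so no surjectivity statement follows. One can repair this by first pushing forward along a generic integer projection $\mathbb R^n\to\mathbb R^d$ through which $\ell$ factors, but making that precise requires essentially the same transversality bookkeeping as stable intersection. Your first suggestion, induction on $d-\dim\rho$, also stalls in exactly the case you flag: if every ray of the star lies strictly inside $\{\ell<0\}$, then $\ell$ is not constant on any ray, so it does not descend to the quotient by any ray, and the inductive hypothesis has nothing to act on. In short, the step you identify as the main obstacle is genuinely the crux, and neither sketch you offer closes it without further input; the paper's stable-intersection reduction to dimension one is precisely the device that dispatches this difficulty cleanly.
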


It is sufficient to show the following.

\begin{prop}
  \label{trop unbounded in f}
  Let $X$ be a tropical variety in $\mathbb R^n$ and $\bm p \in \mathbb Z^n \setminus \{ \mathbf 0 \}$.
  Assume that the function $f: |X| \to \mathbb R, \bm a \mapsto \bm a \cdot \bm p$ is not constant.
  Then $f$ is upper unbounded.
\end{prop}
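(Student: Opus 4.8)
The plan is to prove the contrapositive: assuming that $f$ is bounded above on $|X|$, I will deduce that $f$ is constant, contradicting the hypothesis. First I would note that boundedness above forces the supremum $M := \sup_{|X|} f$ to be attained. Since $X$ is pure $d$-dimensional, $|X|$ is the union of its finitely many $d$-dimensional facets, and on each facet $\sigma$ the restriction of the linear functional $\bm a \mapsto \bm a \cdot \bm p$ is bounded above; hence no recession direction $\bm u$ of $\sigma$ satisfies $\bm u \cdot \bm p > 0$, and the maximum of $f$ over $\sigma$ is attained on a face of $\sigma$. Taking the largest such value over the finitely many facets shows that $M$ is attained, so the level set $Y := \{\, \bm a \in |X| \mid \bm a \cdot \bm p = M \,\}$ is nonempty. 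If $f$ were nonconstant, then $Y \neq |X|$; since $|X|$ is connected, I may choose $\bm a^{*} \in Y$ lying in the closure of $|X| \setminus Y$.

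Next I would localize the balancing condition at $\bm a^{*}$. Let $\tau$ be the unique face of $X$ whose relative interior contains $\bm a^{*}$. Because $\bm a^{*}$ is a relative interior point at which $f$ attains its global maximum, $f$ must be constant on $\tau$, that is $\bm p \perp L(\tau)$; otherwise $f$ would exceed $M$ at points of $\tau$ near $\bm a^{*}$. Forming the star of $X$ along $\tau$ (translate $\bm a^{*}$ to $\mathbf 0$ and pass to the quotient by $L(\tau)$) yields a balanced rational fan $\Sigma$ of dimension $e := d - \dim(\tau) \ge 1$ on which $\bm p$ induces a well-defined linear functional $\bar f$ with $\bar f \le 0$ on $|\Sigma|$, and with $\bar f < 0$ somewhere because $\bm a^{*}$ lies in the closure of $|X| \setminus Y$. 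That the star of a balanced complex is again balanced follows routinely from the definition, since the primitive vectors $\bm u_{\sigma/\tau}$ descend to the quotient.

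Thus it suffices to establish the following fan statement: \emph{if $\Sigma$ is a balanced rational fan of dimension $e \ge 1$ and $\bar f$ is a linear functional with $\bar f \le 0$ on $|\Sigma|$, then $\bar f \equiv 0$ on $|\Sigma|$.} The heart of the matter is the codimension-one computation. If $\rho$ is a ridge of $\Sigma$ (a cone of dimension $e-1$) on which $\bar f$ vanishes, then balancing gives $\sum_{\sigma \succ \rho} \omega_{\Sigma}(\sigma)\, \bm u_{\sigma/\rho} \in \Lambda(\rho) \subseteq L(\rho)$, and since $\bm p \perp L(\rho)$ we obtain
\[
\sum_{\sigma \succ \rho} \omega_{\Sigma}(\sigma)\, \bigl( \bm u_{\sigma/\rho} \cdot \bm p \bigr) = 0.
\]
As $\bar f \le 0$ on $|\Sigma|$ while $\bar f$ vanishes on $\rho$, each term $\bm u_{\sigma/\rho} \cdot \bm p$ is $\le 0$; with the weights $\omega_{\Sigma}(\sigma)$ positive, every term must vanish, so $\bar f \equiv 0$ on every facet through $\rho$. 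For $e = 1$ this is exactly balancing at the apex and already gives $\bar f \equiv 0$; for $e \ge 2$ I would run an induction on $e$ together with the codimension-one connectivity of a balanced fan to spread the vanishing across all facets, contradicting $\bar f < 0$.

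The step I expect to be the main obstacle is precisely this reduction to the fan statement in arbitrary codimension. When the maximizing face $\tau$ has codimension greater than one, there is in general no $(d-1)$-dimensional face of $X$ on which $f$ is constant, so the balancing identity of $X$ cannot be applied directly; one is forced to pass to the star and to prove the half-space statement for balanced fans, whose general case rests on the induction and on the fact that the facets of a balanced fan are connected in codimension one. The final contradiction, once a ridge with $\bar f \equiv 0$ is in hand, is immediate from the displayed balancing identity and the positivity of the weights.
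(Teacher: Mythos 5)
Your reduction to a fan statement is sound and genuinely different from the paper's route: the attainment of the supremum $M$ facet by facet, the choice of $\bm a^{*}$ in the closure of $|X|\setminus Y$, the observation that $\bm p \perp L(\tau)$ for the carrier face $\tau$, and the passage to the star (a balanced fan of dimension $e = d-\dim\tau \ge 1$, granting the standard fact that stars of balanced complexes are balanced) are all correct, as is the case $e=1$ via balancing at the apex. The genuine gap is the inductive step for $e\ge 2$, which you yourself flag as the main obstacle: it does not work as outlined, and it is exactly where the substance of the proposition lies. Your spreading argument needs, as a starting point, a ridge $\rho$ with $\bar f|_{\rho}\equiv 0$ that is a face of a facet on which $\bar f\not\equiv 0$. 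No such ridge need exist a priori: the dangerous configuration is a balanced fan $\Sigma$ with $\bar f<0$ on $|\Sigma|\setminus\{\mathbf 0\}$, so that the zero locus is the apex alone, of dimension $0 < e-1$. Ruling out that configuration is not a routine induction --- it is essentially equivalent to the fan statement itself --- and your argument never gets started there. Moreover, the tool you invoke to spread the vanishing, ``codimension-one connectivity of a balanced fan,'' is false in general: in $\mathbb R^4$ take $V_1=\{x_3=x_4=0\}$ and $V_2=\{x_1=x_2=0\}$, each subdivided into its four quadrant cones with all weights $1$; the union is a connected, pure $2$-dimensional, rational, balanced fan, yet every ridge of a facet in $V_1$ is adjacent only to facets in $V_1$, so no facet--ridge path joins $V_1$ to $V_2$. (The conclusion of the fan statement still holds for this example, but for reasons your mechanism does not see.)

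By contrast, the paper's proof confronts precisely this difficulty with a different device: it first proves the $1$-dimensional case by a monotone edge-path argument, using balancing at vertices (Lemma \ref{1dim balance}, Proposition \ref{1dim ubd}) to keep moving in a direction where $f$ strictly increases until an unbounded ray is reached; for $d\ge 2$ it chooses a segment $l$ in a facet on which $f$ is nonconstant, builds an $(n-d+1)$-dimensional rational affine subspace $W$ with $\dim(\sigma\cap W)=1$, and uses the stable intersection $X\cap_{st}\{W\}$ (Lemma \ref{stable insec}) to cut $X$ down to a $1$-dimensional tropical variety containing $l$, to which the curve case applies. Some such slicing or push-forward input appears unavoidable in the pointed case; to close your gap you would need to supplement the induction with exactly this kind of argument (for instance, handling the case $\bar f<0$ off the apex by stable intersection with a rational hyperplane), at which point your proof largely converges with the paper's.
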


\begin{proof}[Proof of \textquotedblleft Proposition \ref{trop unbounded in f} $\Longrightarrow$ Proposition \ref{trop unbounded}\textquotedblright]
  Assume that Proposition \ref{trop unbounded in f} holds.
  Let $X$ be a tropical variety in $\mathbb R^n$ whose support is not included in a proper rational affine linear subspace of $\mathbb R^n$.
  Take any $\bm p \in \mathbb Z^n \setminus \{ \mathbf 0 \}$.
  If the function $f: |X| \to \mathbb R, \bm a \mapsto \bm a \cdot \bm p$ is constant with the value $k$, then $|X|$ is included in the set $\{ \bm a \in \mathbb R^n \ | \ \bm a \cdot \bm p = k \}$, which is a proper rational affine linear subspace of $\mathbb R^n$.
  This contradicts to the assumption, hence $f$ is not constant.
  Thus $f$ is upper unbounded.
  Since $\bm p$ is an arbitrary element of $\mathbb Z^n \setminus \{ \mathbf 0 \}$, $|X|$ is unbounded in all rational directions.
\end{proof}

First we show Proposition \ref{trop unbounded in f} in the case $\dim X=1$.
We use the following terminology:
Let $\bm p$ be a vertex of $X$, $\sigma$ a 1-dimensional face of $X$ adjacent to $\bm p$, and $\bm d$ the primitive direction vector of $\sigma$ such that $\bm p + t \bm d \in \sigma$ for sufficiently small $t>0$.
Then we say that $\bm d$ is the primitive direction vector of $\sigma$ \textit{from} $\bm p$.

\begin{lem}
  \label{1dim balance}
  Let $X = (X, \omega_X)$ be a 1-dimensional tropical variety in $\mathbb R^n$, and $\bm p$ a vertex of $X$.
  Let $\sigma_1, \ldots, \sigma_k$ be the 1-dimensional faces of $X$ adjacent to $\bm p$.
  For each $i=1, \ldots, k$, let $\bm d_i$ be the primitive direction vector of $\sigma_i$ from $\bm p$.
  Let $\bm q \in \mathbb Z^n \setminus \{ \mathbf 0 \}$.
  Assume that there exists an $i$ such that $\bm d_i \cdot \bm q < 0$.
  Then there exists a $j$ such that $\bm d_j \cdot \bm q > 0$.
\end{lem}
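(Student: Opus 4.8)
The plan is to deduce the statement directly from the balancing condition at the vertex $\bm p$, after identifying the structural vectors $\bm u_{\sigma_i/\{\bm p\}}$ with the primitive direction vectors $\bm d_i$. Since $X$ is $1$-dimensional, its facets are precisely its $1$-dimensional faces, the ones adjacent to $\bm p$ being exactly $\sigma_1, \ldots, \sigma_k$, and the relevant $(d-1)$-dimensional face is the vertex $\tau = \{\bm p\}$. For this $\tau$ we have $L(\tau) = \{\mathbf 0\}$, hence $\Lambda(\tau) = \{\mathbf 0\}$.

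First I would unwind the defining conditions of $\bm u_{\sigma_i/\{\bm p\}}$ in this case. The requirement $\Lambda(\sigma_i) = \Lambda(\tau) + \mathbb Z\bm u_{\sigma_i/\{\bm p\}}$ becomes $\Lambda(\sigma_i) = \mathbb Z\bm u_{\sigma_i/\{\bm p\}}$, so $\bm u_{\sigma_i/\{\bm p\}}$ is a primitive generator of the rank-one lattice $\Lambda(\sigma_i)$; the requirement $\sigma_i \subset \{\bm p\} + \mathbb R_{\geq 0}\bm u_{\sigma_i/\{\bm p\}}$ fixes its direction to be the one pointing into $\sigma_i$ from $\bm p$. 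These are exactly the two properties characterising $\bm d_i$, so $\bm u_{\sigma_i/\{\bm p\}} = \bm d_i$.

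Next I would apply the balancing condition at $\tau = \{\bm p\}$, namely $\sum_{\sigma \succ \tau}\omega_X(\sigma)\bm u_{\sigma/\tau} \in \Lambda(\tau) = \{\mathbf 0\}$, which after the identification reads
$$\sum_{i=1}^k \omega_X(\sigma_i)\,\bm d_i = \mathbf 0.$$
Taking the inner product with $\bm q$ yields
$$\sum_{i=1}^k \omega_X(\sigma_i)\,(\bm d_i \cdot \bm q) = 0.$$
Since each weight $\omega_X(\sigma_i)$ is a positive integer and, by hypothesis, some term with $\bm d_i \cdot \bm q < 0$ is strictly negative, the vanishing of the whole sum forces at least one summand to be strictly positive; that provides an index $j$ with $\bm d_j \cdot \bm q > 0$.

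I expect no serious obstacle here: the entire content is the translation of the balancing condition into the scalar identity $\sum_{i} \omega_X(\sigma_i)(\bm d_i \cdot \bm q) = 0$ followed by a one-line sign argument. The only step needing care is the identification $\bm u_{\sigma_i/\{\bm p\}} = \bm d_i$, because the balancing condition is asserted only for some fixed choice of the vectors $\bm u_{\sigma/\tau}$; I would therefore verify explicitly that when $\tau$ is $0$-dimensional this choice is forced and coincides with the primitive direction vector from $\bm p$.
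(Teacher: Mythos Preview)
Your proposal is correct and follows essentially the same approach as the paper: apply the balancing condition at $\bm p$ to obtain $\sum_i \omega_X(\sigma_i)\bm d_i=\mathbf 0$, pair with $\bm q$, and conclude by the sign argument. You simply add the extra justification that $\bm u_{\sigma_i/\{\bm p\}}=\bm d_i$, which the paper takes for granted.
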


\begin{proof}
  Since $X$ is balanced at $\bm p$,
  $$\sum_{m=1}^k \omega_X(\sigma_m) \bm d_m = \mathbf 0.$$
  Hence
  $$\sum_{m=1}^k \omega_X(\sigma_m)\bm d_m \cdot \bm q = 0.$$
  By the assumption, one of $\bm d_m \cdot \bm q$ is negative.
  Thus one of $\bm d_m \cdot \bm q$ is positive.
\end{proof}

\begin{prop}
  \label{1dim ubd}
  Let $X = (X, \omega_X)$ be a 1-dimensional tropical variety in $\mathbb R^n$ and $\bm p \in \mathbb Z^n \setminus \{ \mathbf 0 \}$.
  Assume that the function $f: |X| \to \mathbb R, \bm a \mapsto \bm a \cdot \bm p$ is not constant on $|X|$.
  Then $f$ is upper unbounded on $|X|$.
\end{prop}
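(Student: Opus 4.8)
The plan is to exhibit, starting from a point where $f$ increases, a path in $|X|$ that ascends through vertices with strictly increasing $f$-values and that must eventually escape to infinity along an unbounded ray. The engine of the argument is the balancing condition in the form of Lemma \ref{1dim balance}, and the reason the path cannot run on forever inside the bounded part of $|X|$ is that $X$ has only finitely many vertices.

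First I would reformulate Lemma \ref{1dim balance} locally. Call an edge $\sigma$ incident to a vertex $\bm v$ \emph{ascending} (resp.\ \emph{descending}, \emph{level}) at $\bm v$ if its primitive direction vector $\bm d$ from $\bm v$ satisfies $\bm d \cdot \bm p > 0$ (resp.\ $< 0$, $= 0$). Applying Lemma \ref{1dim balance} with the functional direction taken to be $\bm p$ says exactly: if some edge is descending at $\bm v$, then some edge is ascending at $\bm v$. Next I would locate a starting configuration. Since $f$ is continuous, $|X|$ is the union of its finitely many edges, and the complex is connected through incidences, constancy of $f$ on every edge would force constancy of $f$ on all of $|X|$; hence non-constancy of $f$ yields an edge $\sigma_0$ on which $f$ is non-constant. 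If $\sigma_0$ is a ray on which $f$ increases, then $f$ is already upper unbounded and we are done. Otherwise $\sigma_0$ is a bounded edge $[\bm a, \bm b]$ with $f(\bm a) \neq f(\bm b)$, or a ray on which $f$ decreases, and in either case I obtain a vertex $\bm v_0$ possessing an ascending edge: in the bounded case take the endpoint of $\sigma_0$ with the smaller $f$-value, and in the decreasing-ray case take the vertex of the ray, at which $\sigma_0$ is descending so that Lemma \ref{1dim balance} supplies an ascending edge.

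Finally I would run the ascent. Starting from $\bm v_0$, follow an ascending edge $\sigma$ with primitive direction $\bm d$. If $\sigma$ is a ray, then $f(\bm v_0 + t \bm d) = f(\bm v_0) + t(\bm d \cdot \bm p) \to +\infty$ and we are done. If $\sigma$ is bounded, it leads to a new vertex $\bm v_1$ with $f(\bm v_1) > f(\bm v_0)$; from the viewpoint of $\bm v_1$ the traversed edge is descending, so Lemma \ref{1dim balance} again supplies an ascending edge at $\bm v_1$, and the process repeats. This produces vertices $\bm v_0, \bm v_1, \bm v_2, \ldots$ with $f(\bm v_0) < f(\bm v_1) < f(\bm v_2) < \cdots$, each joined to the next by an ascending bounded edge, unless at some step the chosen ascending edge is a ray. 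The strict increase of $f$ forces the $\bm v_i$ to be pairwise distinct, while $X$ has only finitely many vertices; hence the process cannot continue indefinitely through bounded edges, and the only possible way for it to stop is by following an ascending ray, along which $f \to +\infty$. Therefore $f$ is upper unbounded on $|X|$.

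The step I expect to be the main obstacle is guaranteeing that the ascent is genuinely \emph{forced} to terminate in a ray rather than cycling or stalling. This is secured precisely by combining the strict monotonicity of $f$ along the path, which rules out any repeated vertex, with the finiteness of the vertex set of $X$; Lemma \ref{1dim balance} then plays the complementary role of guaranteeing that an ascending continuation is available at every interior vertex of the path, so that termination can occur only through an unbounded ascending ray.
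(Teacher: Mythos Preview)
Your proposal is correct and follows essentially the same route as the paper: find an edge on which $f$ is non-constant, then repeatedly ascend through vertices using Lemma \ref{1dim balance}, with termination forced by finiteness. Your termination argument (strictly increasing $f$-values make the visited vertices pairwise distinct) is in fact a touch more explicit than the paper's, which simply appeals to $X$ having finitely many polyhedra; the only trivial omission is the case where $\sigma_0$ is a full line, which the paper's phrasing ``$\sigma'_0$ includes the ray $\bm a_0 + \mathbb R_{\geq 0}\bm d_0$'' absorbs automatically.
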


\begin{proof}
  First we claim that $X$ has a 1-dimensional face $\sigma$ on which $f$ is not constant.
  By the assumption, $f$ is not constant on $|X|$.
  Then there exist $\bm a, \bm b \in |X|$ such that $f(\bm a) < f(\bm b)$
  Since $X$ is connected, there exists a sequence $\sigma_1, \ldots, \sigma_k$ of 1-dimensional face of $X$ such that $\bm a \in \sigma_1, \bm b \in \sigma_k$, and for any $i=1, \ldots, k-1$, $\sigma_i$ and $\sigma_{i+1}$ are adjacent.
  If $f$ is constant on $\sigma_1 \cup \ldots \cup \sigma_k$, then $f(\bm a) = f(\bm b)$, which is a contradiction.
  Hence $f$ is not constant on some $\sigma_i$.

  Let $\sigma'_0$ be a 1-dimensional face of $X$ on which $f$ is not constant.
  Take the primitive direction vector $\bm d_0$ of $\sigma'_0$ with $\bm d_0 \cdot \bm p > 0$.
  Take any interior point $\bm a_0 \in \sigma'_0$.
  If $\sigma'_0$ includes the ray $\bm a_0 + \mathbb R_{\geq 0}\bm d_0$, then $f$ is upper unbounded on $\sigma'_0$, and hence the proof is completed.
  Otherwise, let $\bm a_1$ be the endpoint of $\sigma'_0$ on which $f|_{\sigma'_0}$ takes the maximum value.
  The primitive direction vector of $\sigma'_0$ from $\bm a_1$ is $-\bm d_0$.
  By Lemma \ref{1dim balance} and $(-\bm d_0) \cdot \bm p < 0$, there exists a 1-dimensional face $\sigma'_1$ of $X$ such that its primitive direction vector $\bm d_1$ from $\bm a_1$ satisfies $\bm d_1 \cdot \bm p > 0$.
  If $\sigma'_1$ includes the ray $\bm a_1 + \mathbb R_{\geq 0}\bm d_1$, then $f$ is upper unbounded on $\sigma'_0$, and hence the proof is completed.
  Otherwise, let $\bm a_2$ be the endpoint of $\sigma'_1$ on which $f|_{\sigma'_1}$ takes the maximum value, and iterate the same argument.
  Since $X$ consists of finitely many polyhedra, eventually this iteration will stop.\qedhere

\end{proof}

Next we consider the case $\dim X \geq 2$.
We briefly recall the notion of stable intersections.
See \cite{maclagan2015introduction} for detail.

Let $X,Y$ be tropical varieties in $\mathbb R^n$.
Then the \textit{stable intersection} $X \cap_{st} Y$ of $X$ and $Y$ is the (probably non-connected) polyhedral complex
$$\{ \sigma_1 \cap \sigma_2 \ | \ \sigma_1 \in X, \ \sigma_2 \in Y, \ \dim (\sigma_1 + \sigma_2) = n \}$$
endowed with a certain weight function.
We use the following lemma.

\begin{lem}[{\cite[Theorem 3.6.10]{maclagan2015introduction}}]
  \label{stable insec}
  If $X$ and $Y$ are tropical varieties in $\mathbb R^n$ and $\dim X= d_1, \dim Y = d_2$, then $X \cap_{st} Y$ is empty or a (probably non-connected) pure $(d_1 + d_2 - n)$-dimensional weighted balanced rational polyhedral complex.
  In particular, each connected component of $X \cap_{st} Y$ is a tropical variety.
\end{lem}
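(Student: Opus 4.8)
The plan is to follow the generic--perturbation (fan displacement) approach of Maclagan and Sturmfels. First I would set up the description of $X \cap_{st} Y$ on which everything rests: for a \emph{generic} vector $\bm v \in \mathbb R^n$ and all sufficiently small $\varepsilon > 0$, the combinatorial type of the ordinary intersection $(X + \varepsilon \bm v) \cap Y$ stabilizes, is independent of the generic choice of $\bm v$, and $X \cap_{st} Y$ is its limit as $\varepsilon \to 0^+$. The genericity required is only that $\bm v$ avoid a finite union of proper subspaces, namely the subspaces $L(\sigma) + L(\tau)$ over those pairs $(\sigma,\tau)$ with $L(\sigma) + L(\tau) \neq \mathbb R^n$. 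Establishing that this limit exists and is independent of $\bm v$ is precisely the content of the fan displacement rule; I would prove it by a local computation at each point, reducing to the analogous statement for the star fans discussed below.

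Granting this description, purity and the dimension count follow at once. If $\sigma \in X$ and $\tau \in Y$ are facets with $L(\sigma) + L(\tau) = \mathbb R^n$, then $L(\sigma) \cap L(\tau)$ has dimension $d_1 + d_2 - n$, and for generic $\bm v$ the translate $\sigma + \varepsilon \bm v$ meets $\tau$ transversally in a cell of exactly this dimension; pairs with $L(\sigma) + L(\tau) \neq \mathbb R^n$ contribute nothing in the limit, since the translates are pushed apart. Hence every facet of $X \cap_{st} Y$ has dimension $d_1 + d_2 - n$, giving purity, and rationality is automatic because each cell is an intersection of rational polyhedra.

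Next I would put weights on the facets. To a facet $\rho$ of $X \cap_{st} Y$ I assign the weight $\sum \omega_X(\sigma)\,\omega_Y(\tau)\,[\mathbb Z^n : \Lambda(\sigma) + \Lambda(\tau)]$, the sum running over the transverse pairs $(\sigma,\tau)$ with $\rho = \sigma \cap \tau$, each index being finite precisely because $L(\sigma) + L(\tau) = \mathbb R^n$. Here one must check that this is well defined when a facet is cut out by several pairs; the stabilized perturbation picture shows that the cells of $X \cap_{st} Y$ are genuine limits of transverse intersection cells, so the assignment is consistent. The essential point, and the \textbf{main obstacle}, is balancing. Since balancing is a local condition at each $(d_1 + d_2 - n - 1)$-dimensional cell $\gamma$, I would pass to the star fans $\mathrm{star}_X(\gamma)$ and $\mathrm{star}_Y(\gamma)$ at a relative interior point of $\gamma$; these are balanced rational fans, and the star of $X \cap_{st} Y$ along $\gamma$ is exactly their stable intersection. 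This reduces the whole theorem to the fan case: the stable intersection of two balanced rational fans is balanced.

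For the fan case I would invoke the Fulton--Sturmfels correspondence, under which balanced rational fans correspond to Minkowski weights on the normal fan of a complete toric variety and stable intersection corresponds to the cup product in the Chow cohomology ring; since the product of two Minkowski weights is again a Minkowski weight, the intersection is balanced. Alternatively, one can verify balancing directly at each codimension-one cone of the intersection, assembling the balancing relations of the two fans while tracking the lattice indices $[\mathbb Z^n : \Lambda(\sigma) + \Lambda(\tau)]$; this elementary route is notationally heavy and is where the real work lies. The final clause, that each connected component is a tropical variety, is then immediate, since such a component is connected by definition and inherits purity, rationality, and balancing from $X \cap_{st} Y$.
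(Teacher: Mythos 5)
A preliminary remark: the paper does not prove this lemma at all. It is imported verbatim from \cite[Theorem 3.6.10]{maclagan2015introduction} and used as a black box, so there is no internal argument to compare yours against; what you have produced is a reconstruction of the proof in the cited source. In outline your strategy is the standard one (and essentially Maclagan--Sturmfels'): the fan displacement rule, purity and rationality from transversality of generic translates, localization of balancing at codimension-one cells via star fans, and the Fulton--Sturmfels dictionary between balanced fans and Minkowski weights, under which stable intersection becomes the cup product. (For that last step you would also need to refine both star fans to subfans of a common complete rational fan; you omit this, but it is routine.)

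There is, however, one concrete error. You assign to a facet $\rho$ of $X \cap_{st} Y$ the weight $\sum \omega_X(\sigma)\,\omega_Y(\tau)\,[\mathbb Z^n : \Lambda(\sigma) + \Lambda(\tau)]$, summed over \emph{all} transverse pairs $(\sigma,\tau)$ with $\rho = \sigma \cap \tau$. The fan displacement rule restricts this sum to those pairs for which $(\sigma + \varepsilon \bm v) \cap \tau \neq \emptyset$ for the chosen generic small $\bm v$; the content of the rule is that this restricted sum is independent of $\bm v$. The unrestricted sum is simply the wrong number: take $X = Y = L$, the standard tropical line in $\mathbb R^2$. Then $X \cap_{st} Y$ is the single point $\{ \mathbf 0 \}$, and your formula sums over all six ordered pairs of distinct rays, each with weight $1$ and lattice index $1$, giving multiplicity $6$, whereas the stable intersection multiplicity is $1$ (as tropical B\'ezout demands, and as the limit of the unique transverse intersection point of $L + \varepsilon \bm v$ and $L$ shows). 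Your appeal to ``the stabilized perturbation picture'' is exactly the needed correction --- the multiplicity of $\rho$ must be the total multiplicity of the perturbed intersection cells converging onto $\rho$ --- but that is a different formula from the one you wrote, and balancing is proved for the perturbation-restricted weights, not for yours. Finally, note that the two places where the real work lies, namely the displacement rule itself and balancedness in the fan case, are in your write-up deferred to ``a local computation'' or to Fulton--Sturmfels; once those are citations, the whole argument is a citation, which is precisely what the paper does.
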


\begin{proof}[Proof of Proposition \ref{trop unbounded} and Proposition \ref{trop unbounded in f}]
  Let $d = \dim X$.
  By Proposition \ref{1dim ubd}, the statement holds when $d=1$.

  Assume that $d \geq 2$.
  By the argument similar to the proof of Proposition \ref{1dim ubd}, there exists a face $\sigma \in X$ on which $f$ is not constant.
  We may assume that $\dim (\sigma) = d$.
  Indeed, if otherwise, replace $\sigma$ by a facet of $X$ including it.
  Take a rational line segment $l$ in $\sigma$ on which $f$ is not constant.
  Thus there exists a $(n-d+1)$-dimensional rational affine linear subspace $W$ such that $l \subset W$ and $\dim (\sigma \cap W) = 1$.
  Indeed, take any point $\bm p \in l$ and let $\bm d_1 \subset \mathbb Z^n$ be a direction vector of $l$.
  Take a basis $\{ \bm d_1, \ldots, \bm d_d \}$ of $L(\sigma)$ consisting of integer vectors, and then extend it to a basis $\{ \bm d_1, \ldots, \bm d_n \}$ of $\mathbb R^n$ consisting of integer vectors.
  Thus the affine linear subspace $W = \bm p + \lrangle{\bm d_1,\bm d_{d+1}, \ldots \bm d_n}$ works.
  Note that the singleton $\{ W \}$ forms a tropical variety in $\mathbb R^n$ with the weight $\omega(W)=1$.

  Let $Y$ be a connected component of $X \cap_{st} \{ W \}$ whose support includes $l$.
  By Lemma \ref{stable insec}, $Y$ is a 1-dimensional tropical variety.
  Since $l \subset |Y|$, $f$ is not constant on $|Y|$.
  Hence, by Proposition \ref{1dim ubd}, $f$ is upper unbounded on $|Y|$, which means that $f$ is upper unbounded on $|X|$.

  Proposition \ref{trop unbounded} follows from Proposition \ref{trop unbounded in f}.  \qedhere
\end{proof}

\begin{proof}[Proof of the only if part of Theorem \ref{main}]
  Let $X$ be a tropical variety such that $|X|$ is not a rational affine linear subspace of $\mathbb R^n$.
  We may assume that $|X|$ is not included in any proper rational affine linear subspace of $\mathbb R^n$ by Lemma \ref{fg reduction} and the observation after it.
  Hence, by Proposition \ref{trop unbounded}, $|X|$ is unbounded in all rational directions.
  In order to use Theorem \ref{not fg}, we check that $n \geq 2$.
  If $n =0$, $|X| = \{ \mathbf 0 \}$.
  It contradicts to the assumption that $|X|$ is not an affine linear subspace.
  If $n =1$, $|X|$ is a singleton or the whole space $\mathbb R$ (by Lemma \ref{n in n}).
  It also contradicts to the assumption.
  Thus $n \geq 2$, and hence by Theorem \ref{not fg}, $\mathbf E(X)$ is not finitely generated. \qedhere
\end{proof}

\section{An example of a minimal generating set}

\label{section min gen}

This section is independent of Section 3-5 except for that we will use Remark \ref{newt in newt infty} in the proof of minimality.
In this section, we describe a minimal generating set of the congruence $\mathbf E(L)$ explicitly, where $L$ is the \textit{standard tropical line} defined as follows:
Let $\{ \bm e_1, \bm e_2 \}$ be the standard basis of $\mathbb R^2$.
Let $\rho_1, \rho_2, \rho_3$ be the rays spanned by $-\bm e_1, -\bm e_2, \bm e_1 + \bm e_2$ respectively.
Then we define $L = \{ \rho_1, \rho_2, \rho_3, \{ \mathbf 0 \} \}$.
The weight of any ray is 1.
For simplicity, for any functions $f,g \in \Txpmf$, we write $f|_L=g|_L$ instead of $f|_{|L|} = g|_{|L|}$.

We introduce some notations.
Let $A = \{ (x_1, y_1), \ldots, (x_k, y_k) \}$ be a finite subset of $\mathbb Z^2$.
Let $a = \min \{ x_1, \ldots, x_k \}$, \ $b = \min \{ y_1, \ldots, y_k \}$, and $c = \max \{ x_1+y_1, \ldots, x_k+y_k \}$.
We denote
$$\Delta(A) = \{ (x,y) \in \mathbb Z^2 \ | \ x \geq a, \ y \geq b, \ x+y \leq c \}.$$
For a finite subset $A \subset \mathbb Z^2$, we define the tropical Laurent polynomial $P_A$ as
$$P_A = \bigoplus_{\bm u \in A} \bm x^{\bm u},$$
and the function $f_A$ as $f_A = \overline{P_A}$.
Now, let
$$\mathcal S = \left\{ \left( \overline{0 \oplus x^u y^v}, f_{\Delta(\{ (0,0), (u,v) \})} \right) \ \middle| \ \begin{aligned}
  &\text{$u$ and $v$ are integers with $\gcd(u,v) = 1$,}\\
  &\text{ $u > 0$ or $(u,v) = (0,1)$}
\end{aligned} \right\}.$$
We show that $\mathcal S$ is a minimal generating set of $\mathbf E(L)$.

\begin{lem}
  For any finite subsets $A,B \subset \mathbb Z^2$, $f_A|_L = f_B|_L$ if and only if $\Delta(A) = \Delta(B)$.
\end{lem}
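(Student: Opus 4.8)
The plan is to observe that both sides of the claimed equivalence are governed by the same three integers attached to $A$, namely $a = \min_i x_i$, $\ b = \min_i y_i$, and $c = \max_i (x_i + y_i)$, and to show that each of $f_A|_L$ and $\Delta(A)$ determines and is determined by the triple $(a,b,c)$. First I would compute $f_A|_L$ directly. Since $f_A(\bm p) = \max_{\bm u \in A} \bm u \cdot \bm p$, I restrict to the three rays of $L$. On $\rho_1$ a point is $(-t,0)$ with $t \geq 0$, so $\bm u \cdot \bm p = -tx$ and $f_A(-t,0) = \max_{(x,y)\in A}(-tx) = -ta$; likewise $f_A(0,-t) = -tb$ on $\rho_2$ and $f_A(t,t) = tc$ on $\rho_3$, while $f_A(\mathbf 0) = 0$. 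Thus $f_A|_L$ depends only on $(a,b,c)$.

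Conversely, I would recover $(a,b,c)$ from $f_A|_L$ by evaluating at the primitive lattice points on the three rays: $a = -f_A(-\bm e_1)$, $\ b = -f_A(-\bm e_2)$, and $c = f_A(\bm e_1 + \bm e_2)$. Combining the two directions gives the equivalence $f_A|_L = f_B|_L \iff (a_A,b_A,c_A) = (a_B,b_B,c_B)$. The same bookkeeping must then be carried out for $\Delta(A)$. By definition $\Delta(A)$ is exactly the set of lattice points of the triangular region $\{x \geq a,\ y \geq b,\ x+y \leq c\}$, so it is determined by $(a,b,c)$. For the converse recovery I would read off $a = \min\{x : (x,y) \in \Delta(A)\}$, $\ b = \min\{y : (x,y) \in \Delta(A)\}$, and $c = \max\{x+y : (x,y)\in \Delta(A)\}$, so that $\Delta(A) = \Delta(B) \iff (a_A,b_A,c_A)=(a_B,b_B,c_B)$. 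Chaining the two equivalences through the common triple $(a,b,c)$ finishes the proof.

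The one point that genuinely needs care, and which I would treat as the crux, is that the recovery of $(a,b,c)$ from the set $\Delta(A)$ presupposes that $\Delta(A)$ is nonempty and that its extreme lattice points realize all three bounds. To see nonemptiness, pick $(x_j,y_j) \in A$ achieving $x_j = a$; then $y_j \geq b$ and $x_j + y_j \leq c$ force $a + b \leq a + y_j \leq c$. Hence the triangle has vertices $(a,b)$, $(c-b,b)$, $(a,c-a)$, all of which are lattice points lying in $\Delta(A)$, and these witness that the minima and maximum above return exactly $a$, $b$, and $c$. Everything else is the routine linear computation of $f_A$ on the three rays, so no further obstacle is expected.
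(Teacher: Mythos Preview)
Your proof is correct and follows essentially the same approach as the paper: both compute $f_A$ on each ray $\rho_i$ to reduce the question to equality of the triple $(a,b,c) = (\min x_i,\ \min y_i,\ \max(x_i+y_i))$, and then identify this triple with $\Delta(A)$. Your argument is in fact slightly more careful than the paper's, since you explicitly verify that $\Delta(A)$ is nonempty (via $a+b \leq c$) so that the triple can be recovered from $\Delta(A)$; the paper leaves this step implicit.
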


\begin{proof}
  Let $A = \{ (x_i,y_i) \}_{i=1}^l, \ B = \{ (z_j,w_j) \}_{j=1}^k$.
  For any $t \geq 0$,
  $$f_A(-t\bm e_1) = \max\{ -t x_1, \ldots, -t x_l \} = -t \min \{ x_1 , \ldots, x_l \},$$
  and
  $$f_B(-t\bm e_1) = \max\{ -t z_1, \ldots, -t z_k \} = -t \min \{ z_1 , \ldots, z_k \}.$$
  Hence the equality $f_A|_{\rho_1} = f_B|_{\rho_1}$ holds if and only if $\min \{ x_1 , \ldots, x_l \} = \min \{ z_1 , \ldots, z_k \}$.
  Similarly, $f_A|_{\rho_2} = f_B|_{\rho_2}$ (resp. $f_A|_{\rho_3} = f_B|_{\rho_3}$) holds if and only if $\min \{ y_1 , \ldots, y_l \} = \min \{ w_1 , \ldots, w_k \}$ (resp. $\max \{ x_1+y_1 , \ldots, x_l+y_l \} = \max \{ z_1+w_1 , \ldots, z_k+w_k \}$).
  Thus $f_A|_L = f_B|_L$ if and only if $\Delta(A) = \Delta(B)$.
\end{proof}

\begin{cor}
  For any finite subsets $A \subset \mathbb Z^2$, $f_A|_L = f_{\Delta(A)}|_L$.
\end{cor}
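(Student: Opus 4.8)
The plan is to deduce this corollary directly from the preceding lemma by taking $B = \Delta(A)$. The lemma asserts that $f_A|_L = f_B|_L$ holds if and only if $\Delta(A) = \Delta(B)$, so for the choice $B = \Delta(A)$ the claimed identity $f_A|_L = f_{\Delta(A)}|_L$ is equivalent to the statement that $\Delta$ is idempotent, i.e. $\Delta(\Delta(A)) = \Delta(A)$. Thus the entire corollary reduces to verifying this single set-theoretic identity, which I would isolate and prove separately.

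To prove idempotency, I would write $a = \min\{x_i\}$, $b = \min\{y_i\}$, and $c = \max\{x_i+y_i\}$ for $A = \{(x_i,y_i)\}$, so that $\Delta(A) = \{(x,y) \in \mathbb Z^2 \mid x \geq a,\ y \geq b,\ x+y \leq c\}$. The key preliminary observation is that $c \geq a+b$: choosing an index $i_0$ with $x_{i_0} = a$, we have $c \geq x_{i_0} + y_{i_0} = a + y_{i_0} \geq a + b$. This inequality guarantees that $\Delta(A)$ is nonempty and that its extremal values are genuinely attained.

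Next I would compute the three defining quantities of $\Delta(A)$ itself. Every point of $\Delta(A)$ has $x \geq a$, and the point $(a, c-a)$ lies in $\Delta(A)$ since $c - a \geq b$; hence the minimum $x$-coordinate over $\Delta(A)$ equals $a$. By the symmetric argument (using $(c-b, b)$) the minimum $y$-coordinate equals $b$, and since the same point $(a, c-a)$ achieves $x+y = c$ while all points satisfy $x + y \leq c$, the maximum of $x+y$ over $\Delta(A)$ equals $c$. Because $\Delta$ depends on its argument only through these three numbers $a$, $b$, $c$, it follows that $\Delta(\Delta(A)) = \Delta(A)$.

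The only genuine content is the inequality $c \geq a + b$ and the exhibition of the witness point $(a, c-a)$ (and its mirror); once these are in place, idempotency is immediate and the corollary follows by applying the lemma. I expect the mild subtlety to be making sure $\Delta(A)$ is nonempty, which is exactly what $c \geq a+b$ secures, so I would state and prove that inequality first before invoking the lemma.
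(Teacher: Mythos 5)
Your proposal is correct and takes exactly the same route as the paper: the paper's entire proof is the single line that the corollary ``follows from $\Delta(\Delta(A)) = \Delta(A)$,'' i.e.\ it applies the preceding lemma with $B = \Delta(A)$ just as you do. Your verification of the idempotency identity (the inequality $c \geq a+b$ and the witness points $(a,c-a)$ and $(c-b,b)$) simply fills in the detail that the paper treats as immediate from the definition of $\Delta$.
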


\begin{proof}
  This follows from $\Delta(\Delta(A)) = \Delta(A)$.
\end{proof}

This corollary shows that $\mathcal S \subset \mathbf E(L)$.
Hence $\lrangle{S} \subset \mathbf E(L)$.

To show the converse inclusion, we introduce some more terms.
Let $P \in \Txpm$ be a tropical Laurent polynomial and $a \bm x^{\bm u}$ a term of it.
Let $P_{\check{\bm u}}$ be the polynomial obtained by removing the term $a \bm x^{\bm u}$ from $P$.
Then the term $a \bm x^{\bm u}$ of $P$ is \textit{effective on} $L$ if $\overline{P}|_L \neq \overline{P_{\check{\bm u}}}|_L$.

Consider a tropical Laurent polynomial of the form
$$0 \oplus \bigoplus_{i=1}^{\infty} a_{-i}x^{-i} \oplus \bigoplus_{j=1}^{\infty} b_{-j}y^{-j} \oplus \bigoplus_{k=1}^{\infty} c_{k}x^{k},$$
where $a_{-i} < 0, \ b_{-j} < 0$, $c_k \leq 0$, and $a_{-i} = b_{-j} = c_k = - \infty$ except for finitely many $i,j,k$.
We call it a polynomial of the \textit{standard form}.
In addition, if every term is effective on $L$, we call it a polynomial of the \textit{standard form effective on} $L$.
Note that for any polynomial $P$ of the standard form, by removing non-effective terms, we obtain a polynomial of the standard form effective on $L$.
Later we describe a complete representative system of $\Txpmf / \mathbf E(L)$ by using polynomials of the standard form.

Let $\rho \in L$ be a ray and $\bm d$ the primitive direction vector of it.
For any function $f \in \Txpmf \setminus \{ -\infty \}$, there is a sufficiently small $\varepsilon > 0$ such that the function $\varphi : [0, \varepsilon] \to \mathbb R, \ t \mapsto f(t \bm d)$ is affine linear.
We call the slope of $\varphi$ the \textit{starting slope of} $f$ \textit{on} $\rho$.

In order to describe a complete representative system of $\Txpmf / \mathbf E(L)$, we introduce some lemmas.

\begin{lem}
  \label{sum of slopes}
  Let $f \in \Txpmf$ be a function, and $m_i$ the starting slope of $f$ on $\rho_i$ \ ($i=1,2,3$).
  Then $m_1 + m_2 + m_3 \geq 0$.
\end{lem}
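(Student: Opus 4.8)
The plan is to write $f = \overline{P}$ with $P = \bigoplus_{\bm u \in \mathbb Z^2} a_{\bm u} \bm x^{\bm u}$ and to read off each starting slope $m_i$ directly from the exponents attaining the largest coefficient. Since the starting slopes are only defined for $f \neq -\infty$, I assume this (the excluded case being vacuous). First I would set $M = \max_{\bm u} a_{\bm u} = f(\mathbf 0)$ and let $S = \{ \bm u \in \mathbb Z^2 \ | \ a_{\bm u} = M \}$, which is a nonempty finite set. Evaluating along the ray $\rho_i$ with primitive direction vector $\bm d_i$ gives
$$f(t \bm d_i) = \max_{\bm u} \left( a_{\bm u} + t \, \bm u \cdot \bm d_i \right),$$
and for all sufficiently small $t > 0$ only the terms with $\bm u \in S$ can attain this maximum, because every $\bm u \notin S$ satisfies $a_{\bm u} < M$ and so is strictly dominated near $t = 0$. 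Hence on an initial interval the function is the affine-linear map $t \mapsto M + t \max_{\bm u \in S}(\bm u \cdot \bm d_i)$, and therefore the starting slope is $m_i = \max_{\bm u \in S}(\bm u \cdot \bm d_i)$.

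With $\bm d_1 = -\bm e_1$, $\bm d_2 = -\bm e_2$, and $\bm d_3 = \bm e_1 + \bm e_2$, writing $\bm u = (u_1, u_2)$ this reads
$$m_1 = \max_{\bm u \in S}(-u_1), \qquad m_2 = \max_{\bm u \in S}(-u_2), \qquad m_3 = \max_{\bm u \in S}(u_1 + u_2).$$
To conclude, I would choose $\bm w = (w_1, w_2) \in S$ attaining the maximum defining $m_3$, so that $m_3 = w_1 + w_2$. Since $\bm w \in S$, the values $-w_1$ and $-w_2$ occur among those over which $m_1$ and $m_2$ are maximized, whence $m_1 \geq -w_1$ and $m_2 \geq -w_2$. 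Adding the three relations gives
$$m_1 + m_2 + m_3 \geq -w_1 - w_2 + (w_1 + w_2) = 0,$$
as desired.

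The only step requiring genuine care is the identification of the starting slope: I must verify that $t \mapsto f(t \bm d_i)$ is truly affine-linear on some $[0,\varepsilon]$ and that its slope is governed exactly by the exponents in $S$, not by terms of smaller coefficient. This is precisely where the finiteness of the support of $P$ and the strict inequality $a_{\bm u} < M$ for $\bm u \notin S$ are used. Once the slope formula $m_i = \max_{\bm u \in S}(\bm u \cdot \bm d_i)$ is established, the result reduces to the one-line inequality above, so I expect no further obstacle; the entire content lies in correctly translating \emph{starting slope} into a maximum of the inner products $\bm u \cdot \bm d_i$ over the top-coefficient exponents.
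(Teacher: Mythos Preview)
Your proof is correct and follows essentially the same approach as the paper: both pick a monomial term achieving the maximum coefficient $f(\mathbf 0)$, observe that each $m_i$ is at least the slope of that monomial along $\rho_i$, and use that these three monomial slopes sum to zero. The only difference is cosmetic: you derive the exact formula $m_i = \max_{\bm u \in S}(\bm u \cdot \bm d_i)$ before extracting a single $\bm w \in S$, while the paper simply picks one such term $a x^u y^v$ from the outset and uses only the inequality $m_i \geq$ (slope of $\overline{a x^u y^v}$ on $\rho_i$).
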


\begin{proof}
  Let $f = \overline{P}$ and $f(\mathbf 0) = a$.
  Thus $P$ has a term of the form $a x^uy^v$.
  The starting slope of $\overline{a x^uy^v}$ on $\rho_i$ is less than or equal to $m_i$.
  The starting slope of $\overline{a x^uy^v}$ on $\rho_1$ (resp. $\rho_2$, resp. $\rho_3$) is $-u$ (resp. $-v$, resp. $u+v$).
  Hence $m_1 + m_2 + m_3 \geq (-u) + (-v) + u+v = 0$.
\end{proof}

\begin{lem}
  \label{standard equiv}
  Let $f \in \Txpmf$.
  Then the following are equivalent:
  \begin{enumerate}[(1)]
    \item There exists a polynomial $P$ of the standard from such that $f|_L = \overline P|_L$,
    \item $f(\mathbf 0) = 0$, and both the starting slopes of $f$ on $\rho_1, \rho_2$ are $0$.
  \end{enumerate} 
\end{lem}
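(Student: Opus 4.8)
The plan is to prove the equivalence $(1) \Leftrightarrow (2)$ directly in both directions, using the description of starting slopes of monomials computed in the proof of Lemma \ref{sum of slopes}. Recall that the primitive direction vectors of $\rho_1, \rho_2, \rho_3$ are $-\bm e_1, -\bm e_2, \bm e_1 + \bm e_2$, so for a monomial $\overline{a x^u y^v}$ the starting slopes on $\rho_1, \rho_2, \rho_3$ are $-u, -v, u+v$ respectively.

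First I would prove $(1) \Rightarrow (2)$. Since $\mathbf E(L)$ depends only on $f|_L$, and starting slopes on each $\rho_i$ and the value at $\mathbf 0$ are determined by $f|_L$, I may replace $f$ by $\overline P$ for a polynomial $P$ of the standard form. The constant term of a standard-form polynomial is $0 = 0 \odot x^0 y^0$, and all other terms are $a_{-i} x^{-i}$ with $a_{-i} < 0$, or $b_{-j} y^{-j}$ with $b_{-j} < 0$, or $c_k x^k$ with $c_k \leq 0$. Evaluating at $\mathbf 0 = (0,0)$: every monomial $\overline{c_{\bm u} \bm x^{\bm u}}$ contributes its coefficient $c_{\bm u} \leq 0$, with equality only for the constant term, so $\overline P(\mathbf 0) = 0$. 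For the starting slope on $\rho_1$ (direction $-\bm e_1$, where $x = -t$, $y = 0$ for small $t > 0$): near $\mathbf 0$ the monomials present are the constant $0$ (slope $0$) and the terms $a_{-i}x^{-i}$ (slope $i > 0$ on $\rho_1$, since the exponent of $x$ is $-i$ and the slope is $-(-i) = i$) and the $c_k x^k$ terms (slope $-k \leq 0$). Because $a_{-i} < 0$ and $c_k \le 0$, for $t$ sufficiently small the maximum is achieved by the constant term $0$, so the starting slope on $\rho_1$ is $0$; the same argument with the $b_{-j} y^{-j}$ terms gives starting slope $0$ on $\rho_2$. The only delicate point is verifying that the negative-coefficient terms do not dominate near the origin, which follows because on $\rho_1$ the terms $b_{-j}y^{-j}$ vanish (their $x$-exponent is $0$, $y$-exponent negative, but $y = 0$ gives $-\infty$) and the remaining nonconstant terms all have value strictly below $0$ for small $t$.

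Next I would prove $(2) \Rightarrow (1)$. Given $f$ with $f(\mathbf 0) = 0$ and starting slopes $0$ on $\rho_1, \rho_2$, write $f = \overline Q$ for some Laurent polynomial $Q$ and read off the exponents appearing along each ray. The condition $f(\mathbf 0) = 0$ forces the existence of an effective constant term $0$. The condition that the starting slope on $\rho_1$ is $0$ means that, as we move in the direction $-\bm e_1$, the piecewise-linear function $f$ has initial slope $0$, i.e.\ no term of positive $x$-exponent dominates near the origin and no term of negative $x$-exponent with nonnegative coefficient appears effectively; by Lemma \ref{sum of slopes} and the slope bookkeeping, I can then choose, for each relevant exponent, coefficients matching $f|_L$ and arrange them into the three families $a_{-i}x^{-i}, b_{-j}y^{-j}, c_k x^k$ of a standard-form polynomial, discarding any term that would violate the sign constraints $a_{-i} < 0$, $b_{-j} < 0$, $c_k \le 0$ (such terms are non-effective on $L$ given the slope-$0$ and value-$0$ conditions). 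The resulting standard-form polynomial $P$ then satisfies $f|_L = \overline P|_L$.

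The main obstacle I expect is the $(2) \Rightarrow (1)$ direction: constructing an honest standard-form polynomial $P$ from the data of $f|_L$ and checking that the sign conditions on the coefficients are exactly the ones guaranteed by the hypotheses on the value at $\mathbf 0$ and the two starting slopes. The key insight that makes this manageable is that $f|_L$ is completely determined by its restrictions to the three rays, and on each ray $f$ is a one-variable tropical polynomial (a piecewise-linear convex function) whose behavior is captured by finitely many exponents; the value-$0$ and slope-$0$ conditions pin down precisely which exponents may appear with which signs, so the standard-form constraints are forced rather than merely consistent.
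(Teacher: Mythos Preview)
Your overall approach matches the paper's: both directions are argued directly, and the $(2)\Rightarrow(1)$ construction proceeds by restricting $f$ to each of the three rays as a one-variable tropical polynomial and assembling the coefficients into a standard-form $P$.

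There is, however, a computational error in your $(1)\Rightarrow(2)$ argument. You write that on $\rho_1$ the terms $b_{-j}y^{-j}$ ``vanish'' because ``$y=0$ gives $-\infty$''. This is wrong: points of $\rho_1$ are $(-t,0)\in\mathbb R^2$, and tropically $\overline{b_{-j}y^{-j}}(-t,0)=b_{-j}+(-j)\cdot 0=b_{-j}$, a finite (negative) real number. You seem to be confusing the real coordinate $0$ with the tropical zero $-\infty$. The argument is easily repaired, since $b_{-j}<0$ is still strictly below the constant term $0$, but the reasoning as written is incorrect.

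For $(2)\Rightarrow(1)$ your sketch has the right ingredients but is less explicit than the paper's. The paper writes each restriction $t\mapsto f|_{\rho_i}$ as a one-variable tropical polynomial, uses $f(\mathbf 0)=0$ and the slope-$0$ hypotheses on $\rho_1,\rho_2$ (together with Lemma~\ref{sum of slopes}, which forces the starting slope on $\rho_3$ to be $\geq 0$) to see that these one-variable polynomials automatically have the required sign pattern, assembles them into $P$, and then checks $\overline P|_{\rho_i}=f|_{\rho_i}$ ray by ray using that on each $\rho_i$ the terms coming from the other two families contribute at most $0$. Your phrasing about ``discarding any term that would violate the sign constraints'' is slightly off: the point is that the hypotheses \emph{force} the signs in the one-variable restrictions, not that violating terms are thrown away after the fact.
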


\begin{proof}
  (1) $\Longrightarrow$ (2): Let
  $$P = 0 \oplus \bigoplus_{i=1}^{\infty} a_{-i}x^{-i} \oplus \bigoplus_{j=1}^{\infty} b_{-j}y^{-j} \oplus \bigoplus_{k=1}^{\infty} c_{k}x^{k}.$$
  Since $a_{-i}, b_{-j}, c_k \leq 0$, $f(\mathbf 0) = 0$.
  For a sufficiently small $t>0$,
  $$f(-t\bm e_1) = \max \{ 0, \ \max\{ a_{-i} + it \ | \ i \geq 1 \}, \ \max\{ b_{-j} \ | \ j \geq 1 \}, \ \max\{ c_{k} - kt \ | \ k \geq 1\}  \} = 0.$$
  Hence the starting slope of $f$ on $\rho_1$ is 0.
  Similarly the starting slope of $f$ on $\rho_2$ is also 0.

  (2) $\Longrightarrow$ (1): The function $\varphi_1 : [0, \infty) \to \mathbb R, \ t \mapsto f(-t \bm e_1)$ is a tropical Laurent polynomial function.
  Since $f(\mathbf 0) = 0$ and the starting slope of $f$ on $\rho_1$ is 0, $\varphi$ is defined by a tropical polynomial of the form
  $$0 \oplus a_{-1}t \oplus a_{-2} t^2 + \cdots a_{-n_1}t^{n_1},$$
  where $a_{-i} < 0$.
  Similarly, the function $\varphi_2 : [0, \infty) \to \mathbb R, \ t \mapsto f(-t \bm e_2)$ is the function defined by a tropical polynomial of the form
  $$0 \oplus b_{-1}t \oplus b_{-2} t^2 + \cdots b_{-n_2}t^{n_2},$$
  where $b_{-j}<0$, and the function $\varphi_3 : [0, \infty) \to \mathbb R, \ t \mapsto f(t(\bm e_1 + \bm e_2))$ is the function defined by a tropical polynomial of the form
  $$0 \oplus c_1t \oplus c_{2} t^2 + \cdots c_{n_3}t^{n_3},$$
  where $c_k \leq 0$.
  Here, we use the fact that the starting slope of $f$ on $\rho_3$ is nonnegative, which follows from the previous lemma.
  By using these $a_{-i}, b_{-j}, c_k$, let
  $$P = 0 \oplus \bigoplus_{i=1}^{n_1} a_{-i}x^{-i} \oplus \bigoplus_{j=1}^{n_2} b_{-j}y^{-j} \oplus \bigoplus_{k=1}^{n_3} c_{k}x^{k}.$$
  Note that the term of the form $b_{-j} y^{-j}$ or $c_k x^k$ takes a nonpositive value on $\rho_1$.
  Hence
  $$\overline{P}|_{\rho_1} = \left. \overline{0 \oplus \bigoplus_{i=1}^{\infty} a_{-i}x^{-i}} \right|_{\rho_1} = f|_{\rho_1}.$$
  Similarly, $\overline{P}|_{\rho_2} = f|_{\rho_2}$ and $\overline{P}|_{\rho_3} = f|_{\rho_3}$.
  Therefore $\overline{P}|_L = f|_L$.
\end{proof}

\begin{lem}
  Let $P,Q$ be tropical Laurent polynomials of the standard forms effective on $L$.
  If $\overline{P}|_L = \overline{Q}|_L$, then $P=Q$.
\end{lem}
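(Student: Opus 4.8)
The plan is to analyze the three rays $\rho_1,\rho_2,\rho_3$ separately and show that the restriction of $\overline P$ to each ray pins down exactly one of the three families of terms. Write
$$P = 0 \oplus \bigoplus_{i\ge 1} a_{-i}x^{-i} \oplus \bigoplus_{j\ge 1} b_{-j}y^{-j} \oplus \bigoplus_{k\ge 1} c_k x^k,$$
and likewise for $Q$. As in the computation in the proof of Lemma \ref{standard equiv}, along $\rho_1=\{-t\bm e_1 \mid t\ge 0\}$ the monomials $b_{-j}y^{-j}$ take the constant value $b_{-j}<0$ and the monomials $c_k x^k$ take the value $c_k-kt\le 0$, so all of them stay strictly below the constant term $0$ for $t>0$; hence $\overline P|_{\rho_1}(t)=\max\{0,\ \max_{i\ge 1}(a_{-i}+it)\}$ depends only on the constant and the $a$-terms. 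The same estimate shows an $a$-term is never the strict maximum on $\rho_2$, $\rho_3$, or at the origin, where the constant $0$ strictly dominates it; therefore $a_{-i}x^{-i}$ is effective on $L$ if and only if it is effective on $\rho_1$, and the symmetric statements hold for the $b$-terms on $\rho_2$ and the $c$-terms on $\rho_3$. This reduces the claim to a one-variable uniqueness statement on each ray.

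For the crux I would fix $\rho_1$ and regard $\overline P|_{\rho_1}$ as the univariate convex piecewise-linear function $t\mapsto \max_{i\ge 0}(a_{-i}+it)$, where we set $a_0:=0$ and the admissible slopes are nonnegative integers. A term of slope $i$ is effective precisely when it is the unique strict maximum at some point; since the function is a maximum of finitely many affine functions with distinct slopes, a unique strict maximum at one point forces, by continuity, a whole segment of the graph of that slope, and conversely each maximal segment of the graph arises from exactly one effective term. Consequently the effective slopes are exactly the slopes of the linear segments of the graph, and for each such slope $i$ the coefficient $a_{-i}$ is recovered as the intercept $\overline P|_{\rho_1}(t)-it$ of the supporting line of that segment. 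Both the collection of segments and their supporting lines are determined by the function $\overline P|_{\rho_1}$ alone, so $\overline P|_{\rho_1}=\overline Q|_{\rho_1}$ forces the effective $a$-terms of $P$ and $Q$ to agree coefficient by coefficient; the analogous arguments on $\rho_2$ and $\rho_3$ match the $b$-terms and the $c$-terms.

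Finally, both $P$ and $Q$ carry the constant term $0$ by the definition of the standard form, so combining this with the three matched families yields $P=Q$. I expect the main obstacle to be the crux lemma above, specifically the careful verification that \emph{effective on a ray} coincides with \emph{contributing a linear segment of the convex piecewise-linear graph}: one must rule out the degenerate case of a slope-$i$ line that touches the upper envelope at a single point where it merely ties with its neighbors, since such a line is not effective and must be discarded. Once that identification is in place, reading off each coefficient as the intercept of the relevant supporting line is routine, and the reduction in the first paragraph makes the ray-by-ray conclusion immediate.
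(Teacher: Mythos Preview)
Your proposal is correct and follows essentially the same approach as the paper: both arguments observe that an $a$-term can only be the strict maximum on $\rho_1$ (and symmetrically for the $b$- and $c$-terms), and then recover each effective term from the interval of linearity it contributes to the restricted function on that ray. Your presentation packages this as a reduction to a one-variable convex piecewise-linear uniqueness statement, whereas the paper argues term by term, but the content is the same.
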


\begin{proof}
  Assume that $P$ has a term of the form $a x^{-i}$ for some $a < 0$ and a positive integer $i$.
  Since the term $a x^{-i}$ of $P$ is effective on $L$, there exists a point $\bm p \in L$ such that $\overline{a x^{-i}}(\bm p) > \overline{P_{\check{(-i,0)}}}(\bm p)$.
  By the continuity, $\overline{a x^{-i}} > \overline{P_{\check{(-i,0)}}}$ on a neighborhood $U$ of $\bm p$.
  Note that $a x^{-i}$ takes a negative value on a point in $\rho_2 \cup \rho_3$.
  Thus $\overline{a x^{-i}}|_{\rho_2 \cup \rho_3} \leq 0 \leq \overline{P}|_{\rho_2 \cup \rho_3}$.
  Hence we have $U \cap L \subset \rho_1$.
  
  Consider the function $\varphi : \mathbb R_{\geq 0} \to \mathbb R, \ t \mapsto P(-t\bm e_1)$.
  By the above argument, $\varphi(t) = a + it$ on some interval $I \subset \mathbb R_{\geq 0}$.
  Since $\overline{P}|_L = \overline{Q}|_L$, the function $\psi : \mathbb R_{\geq 0} \to \mathbb R, \ t \mapsto Q(-t\bm e_1)$ satisfies $\psi(t) = a+it$ on $I$.
  This function cannot be realized by the monomial of the form $b y^{-j} \ (b<0, j \in \mathbb Z_{>0})$ or $c x^k \ (c \leq 0, k \in \mathbb Z_{>0})$.
  Thus $Q$ has a term of the form $a'x^{-i'}$ such that $\overline{a'x^{-i'}}(t) = a + it$ for $t \in I$.
  The equality $\overline{a'x^{-i'}}(t) = a + it$ means $a' + i't = a+it$.
  Hence $a' = a$ and $i'=i$.
  Therefore $Q$ has the term $a x^{-i}$.

  The similar argument works for the terms of the form $b y^{-v}$ and $c x^k$.
  Hence every term of $P$ appears in $Q$.
  By the same argument, every term of $Q$ appears in $P$.
  Hence $P=Q$.
\end{proof}

\begin{cor}
  \label{coincide}
  Let $f,g \in \Txpmf$ be functions defined by tropical Laurent polynomials of the standard forms.
  If $f|_L = g|_L$, then $f=g$.
\end{cor}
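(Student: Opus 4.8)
The plan is to deduce Corollary~\ref{coincide} from the preceding lemma by first passing to effective representatives. Given $f = \overline{P}$ and $g = \overline{Q}$ for standard-form polynomials $P,Q$, I would delete from each every term that is non-effective on $L$, obtaining standard-form polynomials $P', Q'$ effective on $L$ with $\overline{P'}|_L = \overline{P}|_L = f|_L$ and $\overline{Q'}|_L = g|_L$. Since $f|_L = g|_L$, the preceding lemma then gives $P' = Q'$, hence $\overline{P'} = \overline{Q'}$ as functions on all of $\mathbb R^2$. The remaining, and genuinely substantive, step is to upgrade the input: I must show that deleting the non-effective terms does not change the function \emph{anywhere}, i.e.\ that $\overline{P} = \overline{P'}$ and $\overline{Q} = \overline{Q'}$ on $\mathbb R^2$, not merely on $L$. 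Granting this, $f = \overline{P} = \overline{P'} = \overline{Q'} = \overline{Q} = g$.

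The key is the special shape of a standard-form polynomial: all of its monomials $x^{-i}$, $y^{-j}$, $x^{k}$ and the constant lie on the $x$-axis or on the negative $y$-axis, so each monomial depends on only one coordinate. Writing a point as $(p_1,p_2)$, I would factor
$$\overline{P}(p_1,p_2) = \max\{ g_1(p_1),\, g_2(p_2) \},$$
where $g_1$ collects the constant together with the $x^{-i}$ and $x^{k}$ terms and $g_2$ collects the $y^{-j}$ terms. Since the constant $0$ belongs to $g_1$, we always have $g_1 \geq 0$. I would then prove the upgraded statement by showing that any term non-effective on $L$ is in fact \emph{globally inactive}, i.e.\ never the strict maximum of $\overline{P}$ at any point of $\mathbb R^2$; such a term can be removed without altering $\overline{P}$ anywhere.

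To see this I would argue by the position of the term. Note first that $\rho_1$ sees $g_1$ on $p_1 \leq 0$ and $\rho_3$ sees it on $p_1 \geq 0$ (on $\rho_3$ one has $g_2 < 0 \leq g_1$, so $\overline{P}(t,t) = g_1(t)$ for $t \geq 0$); thus $\rho_1 \cup \rho_3$ realizes every value of $g_1$. Consequently, if an $x$-axis term is the strict maximum of $\overline{P}$ at some point, then it is strictly maximal in $g_1$ at some $p_1$, where it is attained on $\rho_1$ or $\rho_3$, hence it is effective on $L$. Similarly, a $y^{-j}$ term can be strictly maximal for $\overline{P}$ only where $g_2 > g_1 \geq 0$, hence where $g_2 > 0$; at such a point the corresponding point of $\rho_2$ realizes the value, so the term is effective on $L$. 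Contrapositively, every non-effective term is globally inactive, which yields $\overline{P} = \overline{P'}$ on $\mathbb R^2$ and completes the argument. The main obstacle is precisely this gap between vanishing information on the one-dimensional set $L$ and equality of functions on all of $\mathbb R^2$; the factorization together with the sign constraint $g_1 \geq 0$ is what closes it, by forcing the masked part of $g_2$ (the region where $g_2 \leq 0$) to be invisible to $\overline{P}$ globally and not just along $L$.
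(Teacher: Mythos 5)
Your proof is correct, and its skeleton --- delete the terms non-effective on $L$, then invoke the preceding uniqueness lemma for standard forms effective on $L$ --- is exactly the paper's. The difference is that the paper compresses the reduction into the single phrase ``by removing non-effective terms, we may assume that both $P$ and $Q$ are effective on $L$,'' whereas you isolate and actually prove the point that makes this reduction legitimate: deleting a non-effective term must not change the function anywhere on $\mathbb R^2$, not just on $L$, since otherwise the lemma's conclusion $P'=Q'$ would only yield $\overline{P'}=\overline{Q'}$, an equality of the modified functions, rather than $f=g$. Your argument for this claim --- writing $\overline{P}(p_1,p_2)=\max\{g_1(p_1),g_2(p_2)\}$ with $g_1\geq 0$, observing that $\rho_1\cup\rho_3$ realizes every value of $g_1$ while all $y$-terms stay strictly negative there, and that a $y$-term can be a strict maximizer only where its value exceeds $g_1\geq 0$, in which case the corresponding point of $\rho_2$ witnesses it --- is sound: it shows that any term which is ever a strict maximizer on $\mathbb R^2$ is already a strict maximizer at some point of $L$, so terms non-effective on $L$ are globally inactive and removable. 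The one point you leave tacit (as does the paper) is that the constant term $0$ is always effective, so that deletion really returns a polynomial of the standard form as the lemma requires; this follows from your own analysis at points $(-t,0)$ with $t>0$ small, where the constant strictly dominates all other terms. In short: same route as the paper, but your write-up supplies a justification that the paper's two-line proof leaves implicit, and is the more complete of the two.
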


\begin{proof}
  Take tropical Laurent polynomials $P,Q$ of the standard forms such that $f = \overline{P}$ and $g = \overline{Q}$.
  By removing non-effective terms, we may assume tha both $P$ and $Q$ are effective on $L$.
  Hence, by the previous lemma, we have $P=Q$, and then $f = g$.
\end{proof}

The following lemma gives a complete representative system of $\Txpmf / \mathbf E(L)$.

\begin{lem}
  \label{standard decom}
  For any $f \in \Txpmf \setminus \{ -\infty \}$, there exists a unique tuple $(a,u,v,f_0)$ consisting of $a \in \mathbb R$, \ $u, v \in \mathbb Z$, and a function $f_0 \in \Txpmf$ defined by a tropical Laurent polynomial of the standard form such that $f|_L = \overline{ax^uy^v}f_0|_L$.
\end{lem}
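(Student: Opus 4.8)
The plan is to read the triple $(a,u,v)$ off three invariants of $f|_L$ — its value at the origin and its starting slopes on $\rho_1$ and $\rho_2$ — and then obtain $f_0$ as the standard-form normalization of $f$ after dividing out the corresponding monomial. Set $a := f(\mathbf 0)$, and let $m_1, m_2$ be the starting slopes of $f$ on $\rho_1, \rho_2$. Since $f$ is defined by a tropical Laurent polynomial with integer exponents, the dominant monomial near $\mathbf 0$ along each $\rho_i$ contributes an integer slope there, so $m_1, m_2 \in \mathbb Z$; accordingly put $u := -m_1$ and $v := -m_2$.

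For existence I would consider $g := \overline{(-a)x^{-u}y^{-v}}\,f$. Because tropical multiplication is ordinary addition of values, the starting slope of $g$ on a ray is the sum of the starting slope of $f$ and the (globally affine) slope of the monomial there. A direct computation of the monomial's slopes on $\rho_1$ and $\rho_2$ gives $u$ and $v$ respectively, so these sums are $u + m_1 = 0$ and $v + m_2 = 0$, while $g(\mathbf 0) = -a + f(\mathbf 0) = 0$. Thus $g$ satisfies condition (2) of Lemma \ref{standard equiv}, so there is a standard-form polynomial $P$ with $g|_L = \overline P|_L$; set $f_0 := \overline P$. Since $\overline{ax^uy^v}$ and $\overline{(-a)x^{-u}y^{-v}}$ are mutually inverse in $\Txpmf$ (their product is the constant function $\overline 0$), we recover $f = \overline{ax^uy^v}\,g$ everywhere, hence $f|_L = \overline{ax^uy^v} f_0|_L$, as required.

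For uniqueness, suppose $(a,u,v,f_0)$ is any tuple as in the statement. Because $f_0$ is defined by a standard-form polynomial, Lemma \ref{standard equiv} gives $f_0(\mathbf 0) = 0$ and vanishing starting slopes of $f_0$ on $\rho_1$ and $\rho_2$. Evaluating $f|_L = \overline{ax^uy^v} f_0|_L$ at $\mathbf 0$ then forces $a = f(\mathbf 0)$, and comparing starting slopes on $\rho_1, \rho_2$ — where the monomial contributes $-u$ and $-v$ — forces $u = -m_1$ and $v = -m_2$. Hence $a,u,v$ are determined by $f|_L$, whence $f_0|_L = \overline{(-a)x^{-u}y^{-v}} f|_L$ is determined as well; as $f_0$ is a standard-form function, Corollary \ref{coincide} shows that $f_0$ itself is determined. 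This gives uniqueness of the entire tuple.

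The only genuinely delicate points are the integrality of $m_1, m_2$ (needed so that $u, v \in \mathbb Z$) and the correct sign bookkeeping for how multiplication by a monomial translates the starting slopes on each ray; once these are in place, both existence and uniqueness reduce to direct applications of Lemma \ref{standard equiv} and Corollary \ref{coincide}.
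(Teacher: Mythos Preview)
Your argument is correct and follows essentially the same approach as the paper: extract $(a,u,v)$ from $f(\mathbf 0)$ and the starting slopes on $\rho_1,\rho_2$, divide off the monomial and invoke Lemma~\ref{standard equiv} for existence, and use Corollary~\ref{coincide} for uniqueness of $f_0$. Your sign bookkeeping ($u=-m_1$, $v=-m_2$, consistent with the computation in Lemma~\ref{sum of slopes}) is in fact cleaner than the paper's own proof, which contains a harmless sign slip in its uniqueness paragraph.
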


\begin{proof}
  We first show the uniqueness.
  Assume that $f|_L = \overline{ax^uy^v}f_0|_L$ for some $a \in \mathbb R, u,v \in \mathbb Z$ and a function $f_0$ defined by a tropical Laurent polynomial of the standard form.
  By substituting $\bm x = \mathbf 0$, we have $a = f(\mathbf 0)$.
  The starting slope of $\overline{ax^uy^v}f_0$ on $\rho_1$ (resp. $\rho_2$) is $u$ (resp. $v$).
  Hence $u,v$ is determined by the function $f$.
  Therefore $a, u$ and $v$ are unique.
  Since $f_0|_L = \left( \overline{ax^uy^v} \right)^{-1} f |_L$, $f_0|_L$ is also uniquely determined by $f$.
  By Corollary \ref{coincide}, $f_0$ is unique.

  We show the existence.
  Let $a = f(\mathbf 0)$, and let $u$ (resp. $v$) be the starting slope of $f$ on $\rho_1$ (resp. $\rho_2$).
  Then $\left( \overline{ax^uy^v} \right)^{-1} f$ satisfies the condition (2) in Lemma \ref{standard equiv}.
  Hence there exists a function $f_0$ defined by a tropical Laurent polynomial of the standard form such that $\left( \overline{ax^uy^v} \right)^{-1} f|_L = f_0|_L$.
  Thus $f|_L = \overline{ax^uy^v}f_0|_L$.
\end{proof}

Next we see some properties of $\lrangle{\mathcal S}$.
Note that for any finite subsets $A,B \subset \mathbb Z^2$, clearly
$$f_A \odot f_B = f_{A + B},$$
where $A + B = \{ \bm a + \bm b \ | \ \bm a \in A, \ \bm b \in B \}$.
Also note that for any finite subsets $A,B \subset \mathbb Z^2$, $\Delta(A + B) = \Delta(A) + \Delta(B)$.
This follows from the definition of $\Delta$.

\begin{lem}
  Let $A \subset \mathbb Z^2$ be a finite subset.
  \begin{enumerate}[(1)]
    \item For any $\bm u \in \mathbb Z^2$, $\overline{\bm x^{\bm u}} \odot f_A = f_{A + \{ \bm u \}}$.
    \item For any positive integer $n$, $(f_{\Delta(A)})^n = f_{\Delta(nA)}$, where $nA = \{ n \bm a \ | \ \bm a \in A \}$.
  \end{enumerate}
\end{lem}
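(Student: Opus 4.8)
The plan is to derive both parts from the two observations recorded just before the statement, namely that $f_B \odot f_C = f_{B+C}$ for finite $B,C \subset \mathbb Z^2$, and that $\Delta(B+C) = \Delta(B) + \Delta(C)$. Neither part requires returning to the definition of $f$ as a function on $\mathbb R^2$; both are purely combinatorial manipulations of exponent sets.

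For part (1), I would first note that $f_{\{\bm u\}} = \overline{\bm x^{\bm u}}$, which is immediate from $P_{\{\bm u\}} = \bm x^{\bm u}$. Applying $f_B \odot f_C = f_{B+C}$ with $B = \{\bm u\}$ and $C = A$ then gives
$$\overline{\bm x^{\bm u}} \odot f_A = f_{\{\bm u\}} \odot f_A = f_{\{\bm u\} + A} = f_{A + \{\bm u\}},$$
which is exactly the claim. Thus part (1) is just the special case of the sum formula where one factor is a monomial.

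For part (2), I would unfold the tropical power $(f_{\Delta(A)})^n$ as the $n$-fold product $f_{\Delta(A)} \odot \cdots \odot f_{\Delta(A)}$, and apply $f_B \odot f_C = f_{B+C}$ repeatedly to obtain $(f_{\Delta(A)})^n = f_{\Delta(A) + \cdots + \Delta(A)}$, the exponent being the $n$-fold Minkowski sum. It then remains to identify this Minkowski sum with $\Delta(nA)$, which I would do in two steps. First, a straightforward induction on $n$ using $\Delta(B+C) = \Delta(B) + \Delta(C)$ gives $\Delta(A) + \cdots + \Delta(A) = \Delta(A + \cdots + A)$, where now $A + \cdots + A$ denotes the $n$-fold \emph{sumset} of $A$. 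Second, writing $A = \{(x_i, y_i)\}$ with $a = \min_i x_i$, $b = \min_i y_i$, $c = \max_i (x_i + y_i)$, I would verify that the sumset $A + \cdots + A$ and the dilation $nA$ share the same defining triple $(na, nb, nc)$, since the minimum (respectively maximum) of a sum of $n$ independent choices from $A$ is $n$ times the individual minimum (respectively maximum). Hence $\Delta(A + \cdots + A) = \Delta(nA)$, and combining the two steps yields $(f_{\Delta(A)})^n = f_{\Delta(nA)}$.

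The only point that demands care is the distinction between the sumset $A + \cdots + A$ and the dilation $nA = \{ n\bm a \mid \bm a \in A \}$: these are genuinely different subsets of $\mathbb Z^2$ in general, and they agree only after passing through the operator $\Delta$. Once that subtlety is isolated and handled by the explicit $(na, nb, nc)$ computation, the rest is routine bookkeeping with the two prior observations.
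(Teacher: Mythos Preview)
Your proposal is correct and follows essentially the same route as the paper. The paper dismisses (1) as ``clear'' and for (2) writes the same three-line chain $(f_{\Delta(A)})^n = f_{\Delta(A)+\cdots+\Delta(A)} = f_{\Delta(A+\cdots+A)} = f_{\Delta(nA)}$, asserting the last equality ``follows from the definition of $\Delta$''; you have simply unpacked each of these steps, including the $(na,nb,nc)$ verification that the sumset and the dilation share the same $\Delta$-triple.
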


\begin{proof}
  (1) is clear.
  (2) holds because
  $$\begin{aligned}
    (f_{\Delta(A)})^n &= f_{\Delta(A) + \cdots + \Delta(A)}\\
    &= f_{\Delta(A + \cdots + A)}\\
    &= f_{\Delta(nA)},
  \end{aligned}$$
  where the equality $\Delta(A + \cdots + A) = \Delta(nA)$ follows from the definition of $\Delta$.
\end{proof}

\begin{lem}
  \label{binom Delta}
  For any nonempty finite subset $A \subset \mathbb Z^2$,
  $$f_A \sim_{\lrangle{\mathcal S}} f_{\Delta(A)}.$$
\end{lem}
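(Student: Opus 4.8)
The plan is to prove the relation $f_A \sim_{\lrangle{\mathcal S}} f_{\Delta(A)}$ by first reducing to the generators of $\mathcal S$ and then building up $\Delta(A)$ from the single-point data $(0,0)$ and $(u,v)$ using the multiplicative structure recorded in the preceding lemma. The key observation is that the generating pairs in $\mathcal S$ are precisely of the form $\bigl( \overline{0 \oplus x^u y^v}, f_{\Delta(\{(0,0),(u,v)\})} \bigr)$ with $\gcd(u,v)=1$ and $u>0$ or $(u,v)=(0,1)$, so each generator identifies a two-point binomial function with its ``$\Delta$-completion.'' Since a congruence is closed under multiplication and addition and $f_A \odot f_B = f_{A+B}$ together with $\Delta(A+B)=\Delta(A)+\Delta(B)$, I can manipulate $\Delta$-data additively under the congruence.

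First I would handle the base case of a two-element set $A = \{\mathbf 0, \bm w\}$ for an arbitrary $\bm w \in \mathbb Z^2 \setminus \{\mathbf 0\}$. Write $\bm w = m \bm u$ where $\bm u$ is primitive and $m = \gcd$ of the coordinates. After possibly multiplying by a monomial $\overline{\bm x^{\bm v}}$ (which is an isomorphism-type move via part (1) of the previous lemma and so preserves $\sim_{\lrangle{\mathcal S}}$ upon translating both sides), I can normalize so that the primitive direction $\bm u$ satisfies the sign condition $u>0$ or $(u,v)=(0,1)$ required in $\mathcal S$, since replacing $\bm u$ by $-\bm u$ and compensating with a monomial shift covers the remaining half-plane of directions. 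Then $\bigl(\overline{0 \oplus \bm x^{\bm u}}, f_{\Delta(\{\mathbf 0, \bm u\})}\bigr) \in \mathcal S$, and raising both sides to the $m$-th power using part (2) of the lemma gives $f_{\Delta(\{\mathbf 0, m\bm u\})} = f_{\Delta(\{\mathbf 0, \bm w\})}$ on one side, while $\bigl(\overline{0\oplus \bm x^{\bm u}}\bigr)^m = \overline{0 \oplus \bm x^{\bm u} \oplus \cdots \oplus \bm x^{m\bm u}}$ is a function whose associated point set is $\{\mathbf 0, \bm u, \ldots, m\bm u\}$ — and this set has the same $\Delta$ as $\{\mathbf 0, \bm w\}$, so it is congruent to $f_{\{\mathbf 0, \bm w\}}$ by the same base generator applied again or by absorbing intermediate lattice points.

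Next I would pass from two-element sets to a general nonempty finite $A = \{\bm a_1, \ldots, \bm a_k\}$. The idea is to write $f_A = f_{\{\bm a_1\}} \oplus \cdots \oplus f_{\{\bm a_k\}}$ and compare with $f_{\Delta(A)}$. Since $\Delta(A)$ depends on $A$ only through the three quantities $a = \min x_i$, $b = \min y_i$, $c = \max(x_i+y_i)$, the target $\Delta(A)$ is determined by (at most) three ``extreme'' points, and $f_{\Delta(A)}$ is built as a sum of $\Delta$-completions of binomials joining these extreme corners. I would factor out a monomial $\overline{\bm x^{(a,b)}}$ to reduce to the case $a=b=0$, and then use the base case together with closure of the congruence under $\oplus$: each binomial $f_{\{\mathbf 0,\bm w\}}$ is congruent to its $\Delta$-completion, and summing these completions fills in exactly the triangular region $\Delta(A)$. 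The routine bookkeeping here is checking that the union of the binomial $\Delta$-completions anchored at the corners equals $\Delta(A)$ as a point set, which follows directly from the defining inequalities $x \geq a$, $y \geq b$, $x+y \leq c$.

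The main obstacle I anticipate is the direction-normalization in the base case: the generating set $\mathcal S$ only supplies binomials whose primitive direction $\bm u$ lies in the prescribed half-plane ($u>0$, or $(u,v)=(0,1)$), so for a point $\bm w$ in the complementary directions I must recover the relation by translating with a monomial and reading off the correct generator for the \emph{opposite} primitive direction, then verifying that the $\Delta$-completion is unchanged under this maneuver. Establishing that every primitive direction is reachable this way — equivalently, that the sign condition in $\mathcal S$ loses no generality once monomial multiplications are allowed — is the delicate point; once it is settled, the multiplicative and additive closure of the congruence, combined with $\Delta(A+B)=\Delta(A)+\Delta(B)$, makes the rest a matter of assembling corners.
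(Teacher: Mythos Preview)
Your plan is essentially the paper's own approach, and the two-element case is fine in spirit. One minor point: you overcomplicate the last step there. In $\Txpmf$ the equality $(\overline{0\oplus\bm x^{\bm u}})^m=\overline{0\oplus\bm x^{m\bm u}}$ holds \emph{as functions} (since $m\cdot\max(0,\bm u\cdot\bm p)=\max(0,m\bm u\cdot\bm p)$), so no further ``absorbing'' or second application of the generator is needed; the paper simply writes this equality and moves on.

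The genuine gap is in your general case. You propose to translate so that $a=b=0$ and then use binomials $f_{\{\mathbf 0,\bm w\}}$ with $\bm w\in A$, but the corner $(a,b)$ (i.e.\ $\mathbf 0$ after translation) need not lie in $A$: the minima $\min_i x_i=0$ and $\min_i y_i=0$ can be realized by \emph{different} points of $A$. Hence $f_A$ cannot be written as $\bigoplus_{\bm w\in A} f_{\{\mathbf 0,\bm w\}}$ using idempotency alone---the monomial $\overline{\bm x^{(a,b)}}$ simply is not a summand you are allowed to insert. The paper bridges this with a two-step maneuver: first apply the two-element case to the pair $\{\bm u_1,\bm u_2\}\subset A$ of extreme points achieving the two minima, which replaces $f_{\{\bm u_1,\bm u_2\}}$ by $f_{\Delta(\{\bm u_1,\bm u_2\})}$ and thereby \emph{introduces} the corner $(a,b)\in\Delta(\{\bm u_1,\bm u_2\})$ into the running sum; only then can one form the binomial $\{(a,b),\bm u_3\}$ with the third extreme point and apply the two-element case a second time to obtain $f_{\Delta(\{(a,b),\bm u_3\})}=f_{\Delta(A)}$. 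This bootstrap---using one $\Delta$-completion to manufacture a point not in $A$, then using that point in a second binomial---is the idea missing from your outline.

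Finally, the ``main obstacle'' you flag (direction normalization) is actually trivial: if the primitive direction fails the sign condition in $\mathcal S$, swap the two points of the binomial, which replaces $\bm u$ by $-\bm u$ and lands you back in the allowed half-plane. The real work is the corner-building above.
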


\begin{proof}
  If $|A| = 1$, it is clear.
  Assume that $|A| = 2$, say $A = \{ \bm u_1, \bm u_2 \}$.
  Let $\bm u_1 - \bm u_2 = (gu,gv)$, where $g$ is a positive integer and $u,v$ are integers with $\gcd(u,v)=1$.
  If $u > 0$ or $(u,v) = (0,1)$, then the pair $\left( \overline{0 \oplus x^uy^v}, f_{\Delta(\{ (0,0),(u,v) \})} \right)$ is in $\mathcal S$.
  Thus
  $$\begin{aligned}
    \overline{\bm x^{\bm u_1} \oplus \bm x^{\bm u_2}} &= \overline{\bm x^{\bm u_2}} \odot \left( \overline{\bm x^{\bm u_1 - \bm u_2} \oplus 0} \right)\\
    &= \overline{\bm x^{\bm u_2}} \odot \left( \overline{x^uy^v \oplus 0} \right)^g\\
    &\sim_{\lrangle{\mathcal S}} \overline{\bm x^{\bm u_2}} \odot \left( f_{\Delta(\{ (0,0), (u,v) \})} \right)^g\\
    &= f_{\Delta(\{ (0,0), (gu, gv) \} + \{ \bm u_2 \})}\\
    &= f_{\Delta(\{ \bm u_2, \bm u_1 \})}.
  \end{aligned}$$
  If $(u,v)$ satisfies neither $u > 0$ nor $(u,v) = (0,1)$, by exchanging $\bm u_1$ and $\bm u_2$, the same argument works.
  Hence the case $|A| = 2$ is shown.

  We now show in the general case.
  Let $A = \{ (u_1, v_1), \ldots, (u_k, v_k) \}$, \ $a = \min \{ u_1, \ldots, u_k \}$, \ $b = \min \{ v_1, \ldots, v_k \}$, and $c = \max \{ u_1+v_1, \ldots, u_k+v_k \}$.
  Thus there exist $\bm u_1 = (u_{i_1},v_{i_1}), \bm u_2 = (u_{i_2},v_{i_2}), \bm u_3 = (u_{i_3},v_{i_3}) \in A$ (probably not distinct) such that $u_{i_1} =a, v_{i_2} = b$, and $u_{i_3} + v_{i_3} = c$.
  Note that $(a,b) \in \Delta(\{ \bm u_1, \bm u_2 \})$ and $\Delta(\{ (a,b), \bm u_3 \}) = \Delta(A)$ by the definition of $\Delta$.
  Hence we have
  $$\begin{aligned}
    f_A &= f_A \oplus \overline{\bm x^{\bm u_1} \oplus \bm x^{\bm u_2}} \oplus \overline{\bm x^{\bm u_3}}\\
    &\sim_{\lrangle{\mathcal S}} f_A \oplus f_{\Delta(\{ \bm u_1, \bm u_2 \})} \oplus \overline{\bm x^{\bm u_3}}\\
    &= f_A \oplus f_{\Delta(\{ \bm u_1, \bm u_2 \})} \oplus \overline{x^ay^b} \oplus \overline{\bm x^{\bm u_3}}\\
    &\sim_{\lrangle{\mathcal S}} f_A \oplus f_{\Delta(\{ \bm u_1, \bm u_2 \})} \oplus f_{\Delta(A)}\\
    &= f_{\Delta(A)},
  \end{aligned}$$
  where, in the last equality, we use that $A \subset \Delta(A)$ and $\Delta(\{ \bm u_1, \bm u_2 \}) \subset \Delta(A)$.
\end{proof}

\begin{lem}
  \label{binom standard}
  For any integers $u,v$, 
  $$\overline{0 \oplus x^uy^v} \sim_{\lrangle{\mathcal S}} \overline{0 \oplus \bigoplus_{i=1}^{\infty} a_{-i}x^{-i} \oplus \bigoplus_{j=1}^{\infty} b_{-j}y^{-j} \bigoplus_{k=1}^{\infty} c_{k}x^{k}}$$
  for some $a_{-i}, b_{-j}, c_k \in \{ 0, -\infty \}$ with $a_{-i} = b_{-j} = c_k = -\infty$ except for finitely many $i,j,k$.
\end{lem}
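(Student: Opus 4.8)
The plan is to deduce the statement from Lemma~\ref{binom Delta} applied twice, so that the only real content becomes a combinatorial identity for the operator $\Delta$. First I would note that $\overline{0 \oplus x^u y^v} = f_{\{(0,0),(u,v)\}}$, whence Lemma~\ref{binom Delta} gives
$$\overline{0 \oplus x^u y^v} \sim_{\lrangle{\mathcal S}} f_{\Delta(\{(0,0),(u,v)\})}.$$
It therefore suffices to produce a finite set $B \subset \mathbb Z^2$ for which $f_B$ has the shape demanded on the right-hand side of the lemma and $\Delta(B) = \Delta(\{(0,0),(u,v)\})$; indeed, Lemma~\ref{binom Delta} then also yields $f_B \sim_{\lrangle{\mathcal S}} f_{\Delta(B)} = f_{\Delta(\{(0,0),(u,v)\})}$, and transitivity of $\sim_{\lrangle{\mathcal S}}$ completes the argument.

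To build $B$, I would set $a = \min\{0,u\}$, $b = \min\{0,v\}$, and $c = \max\{0,u+v\}$, which are exactly the three numbers defining $\Delta(\{(0,0),(u,v)\})$, and then put
$$B = \{(0,0)\} \cup \{(-i,0) \mid 1 \le i \le -a\} \cup \{(0,-j) \mid 1 \le j \le -b\} \cup \{(k,0) \mid 1 \le k \le c\}.$$
By construction $B$ is finite and supported on the $x$-axis together with the nonpositive part of the $y$-axis, and each of its coefficients in $f_B$ equals $0$. Hence $f_B = \overline{0 \oplus \bigoplus_{i} a_{-i} x^{-i} \oplus \bigoplus_{j} b_{-j} y^{-j} \oplus \bigoplus_{k} c_k x^k}$ with every coefficient in $\{0,-\infty\}$ and all but finitely many equal to $-\infty$, which is precisely the form required in the statement.

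The remaining step, and the only one carrying real content, is the identity $\Delta(B) = \Delta(\{(0,0),(u,v)\})$. Since $\Delta$ of a finite set depends only on the minimum of the first coordinates, the minimum of the second coordinates, and the maximum of $x+y$, I would simply evaluate these three quantities on the explicit list above: the points $(-i,0)$ attain the minimal first coordinate $a$, the points $(0,-j)$ attain the minimal second coordinate $b$, and $(c,0)$ (or $(0,0)$ when $c=0$) attains the maximal coordinate sum $c$, while no point of $B$ violates any of the three bounds. As $\Delta$ is determined by the triple $(a,b,c)$, the two sets coincide. I do not expect a genuine obstacle here; the single point requiring care is that $B$ must remain on the coordinate axes so that $f_B$ has the required shape, which is why the coordinate sum $c$ is realized through the axis vertex $(c,0)$ rather than through the off-axis point $(u,v)$.
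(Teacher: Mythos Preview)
Your proposal is correct and follows essentially the same approach as the paper: both arguments apply Lemma~\ref{binom Delta} twice and reduce the question to the combinatorial identity $\Delta(\{(0,0),(u,v)\}) = \Delta(B)$ for a suitable axis-supported set $B$. The only difference is cosmetic---the paper splits into six cases according to the signs of $u$, $v$, and $u+v$ and writes down a minimal $B$ in each case, whereas your uniform definition $B = \{(0,0)\} \cup \{(a,0),(0,b),(c,0)\}$ (padded with the intermediate axis points) handles all cases at once.
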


\begin{proof}
  If $u \geq 0$ and $v \geq 0$, $\Delta(\{ (0,0), (u,v) \}) = \Delta(\{ (0,0), (u+v,0) \})$, see Figure 1 (1).
  Hence, by the previous lemma, 
  $$\overline{0 \oplus x^uy^v} \sim_{\lrangle{\mathcal S}} f_{\Delta(\{ (0,0), (u,v) \})} \sim_{\lrangle{\mathcal S}} \overline{0 \oplus x^{u+v}}.$$
  The other cases are similar.
  See Figure 1 (2)--(6).
  \begin{itemize}
  \item If $u + v \geq 0$ and $v \leq 0$, then $\overline{0 \oplus x^uy^v} \sim_{\lrangle{\mathcal S}} \overline{0 \oplus y^v \oplus x^{u+v}}$.
  \item If $u + v \leq 0$ and $u \geq 0$, then $\overline{0 \oplus x^uy^v} \sim_{\lrangle{\mathcal S}} \overline{0 \oplus y^v}.$
  \item If $u \leq 0$ and $v \geq 0$, then $\overline{0 \oplus x^uy^v} \sim_{\lrangle{\mathcal S}} \overline{0 \oplus x^u \oplus y^v}.$
  \item If $v \geq 0$ and $u+v \leq 0$, then $\overline{0 \oplus x^uy^v} \sim_{\lrangle{\mathcal S}} \overline{0 \oplus x^u}.$
  \item If $u \leq 0$ and $u+v \geq 0$, then $\overline{0 \oplus x^uy^v} \sim_{\lrangle{\mathcal S}} \overline{0 \oplus x^u \oplus x^{u+v}}.$
  \end{itemize}
  Thus the proof is completed.
\end{proof}

\begin{figure}[htbp]
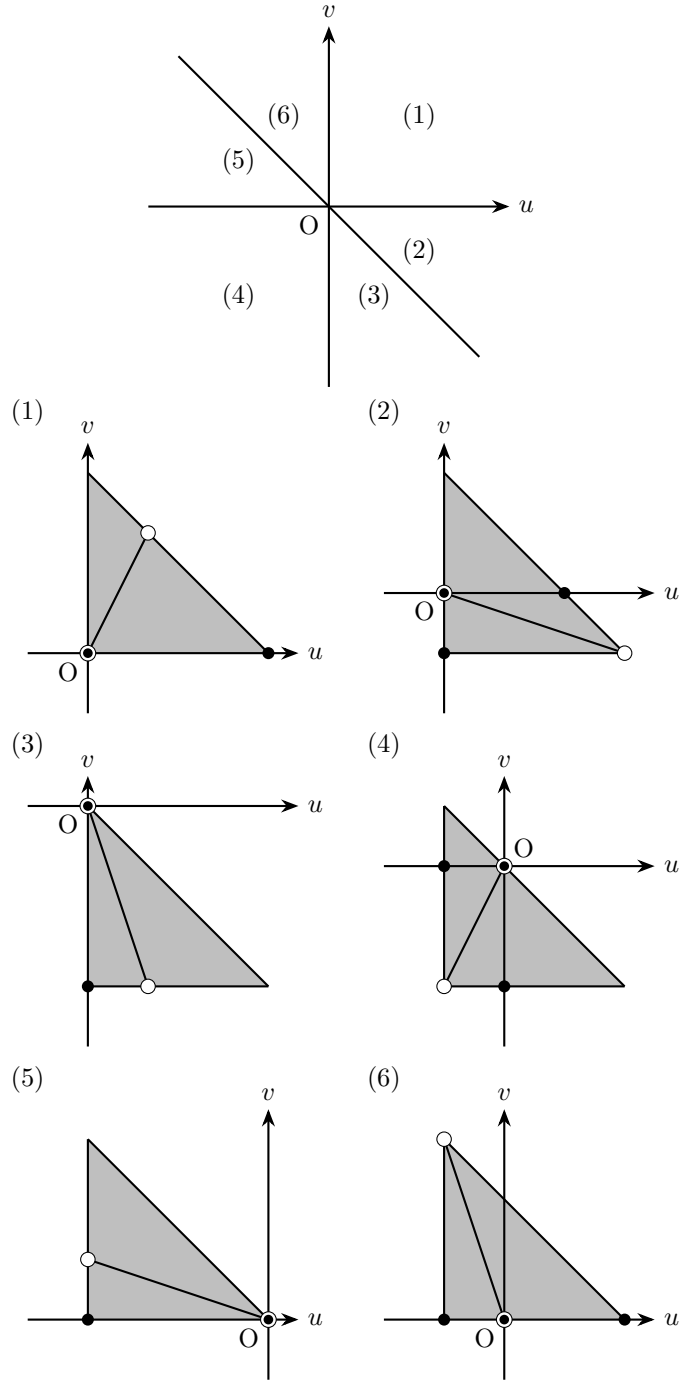

\label{Delta transform}
\centering

\tikz[xscale=0.8, yscale=0.8]{

    \node at (0,0) [anchor=north east] {O};
    \node at (3,0) [anchor=west] {$u$};
    \node at (0,3) [anchor=south] {$v$};

    \draw [thick, -Stealth] (-3,0)--(3,0);
    \draw [thick, -Stealth] (0,-3)--(0,3);
   
    \draw [thick] (-2.5,2.5)--(2.5,-2.5);

    \node at (1.5,1.5) {(1)};
    \node at (1.5,-0.75) {(2)};
    \node at (0.75,-1.5) {(3)};
    \node at (-1.5,-1.5) {(4)};
    \node at (-1.5,0.75) {(5)};
    \node at (-0.75,1.5) {(6)};
   }

\begin{tabular}{cc}
  
  \tikz[xscale=0.8, yscale=0.8]{
    \fill[lightgray] (0,0)--(3,0)--(0,3)--cycle;

    \node at (0,0) [anchor=north east] {O};
    \node at (3.5,0) [anchor=west] {$u$};
    \node at (0,3.5) [anchor=south] {$v$};

    \draw [thick, -Stealth] (-1,0)--(3.5,0);
    \draw [thick, -Stealth] (0,-1)--(0,3.5);
    \node at (-1,4) {(1)};
   
    \draw [thick] (3,0)--(0,3);

    \draw [thick] (0,0)--(1,2);

    \fill[white] (0,0) circle [radius=0.13];
    \draw (0,0) circle [radius=0.13];
    \fill[white] (1,2) circle [radius=0.12];
    \draw (1,2) circle [radius=0.12];
    \fill[black] (0,0) circle [radius=0.08];
    \fill[black] (3,0) circle [radius=0.1];
   }
   &
  
  \tikz[xscale=0.8, yscale=0.8]{
    \fill[lightgray] (0,-1)--(3,-1)--(0,2)--cycle;

    \node at (0,0) [anchor=north east] {O};
    \node at (3.5,0) [anchor=west] {$u$};
    \node at (0,2.5) [anchor=south] {$v$};

    \draw [thick, -Stealth] (-1,0)--(3.5,0);
    \draw [thick, -Stealth] (0,-2)--(0,2.5);
    \node at (-1,3) {(2)};

    \draw [thick] (3,-1)--(0,2);
    \draw [thick] (0,-1)--(3,-1);

    \draw [thick] (0,0)--(3,-1);

    \fill[white] (0,0) circle [radius=0.13];
    \draw (0,0) circle [radius=0.13];
    \fill[white] (3,-1) circle [radius=0.12];
    \draw (3,-1) circle [radius=0.12];
    \fill[black] (0,0) circle [radius=0.08];
    \fill[black] (0,-1) circle [radius=0.1];
    \fill[black] (2,0) circle [radius=0.1];
   }\\

  \tikz[xscale=0.8, yscale=0.8]{
    \fill[lightgray] (0,-3)--(3,-3)--(0,0)--cycle;

    \node at (0,0) [anchor=north east] {O};
    \node at (3.5,0) [anchor=west] {$u$};
    \node at (0,0.5) [anchor=south] {$v$};

    \draw [thick, -Stealth] (-1,0)--(3.5,0);
    \draw [thick, -Stealth] (0,-4)--(0,0.5);
    \node at (-1,1) {(3)};
   
    \draw [thick] (3,-3)--(0,0);
    \draw [thick] (0,-3)--(3,-3);

    \draw [thick] (0,0)--(1,-3);

    \fill[white] (0,0) circle [radius=0.13];
    \draw (0,0) circle [radius=0.13];
    \fill[white] (1,-3) circle [radius=0.12];
    \draw (1,-3) circle [radius=0.12];
    \fill[black] (0,0) circle [radius=0.08];
    \fill[black] (0,-3) circle [radius=0.1];
   } & 

     \tikz[xscale=0.8, yscale=0.8]{
    \fill[lightgray] (-1,-2)--(2,-2)--(-1,1)--cycle;

    \node at (0,0) [anchor=south west] {O};
    \node at (2.5,0) [anchor=west] {$u$};
    \node at (0,1.5) [anchor=south] {$v$};

    \draw [thick, -Stealth] (-2,0)--(2.5,0);
    \draw [thick, -Stealth] (0,-3)--(0,1.5);
    \node at (-2,2) {(4)};
   
    \draw [thick] (2,-2)--(-1,1);
    \draw [thick] (-1,-2)--(2,-2);
    \draw [thick] (-1,1)--(-1,-2);

    \draw [thick] (0,0)--(-1,-2);

    \fill[white] (0,0) circle [radius=0.13];
    \draw (0,0) circle [radius=0.13];
    \fill[white] (-1,-2) circle [radius=0.12];
    \draw (-1,-2) circle [radius=0.12];
    \fill[black] (0,0) circle [radius=0.08];
    \fill[black] (-1,0) circle [radius=0.1];
    \fill[black] (0,-2) circle [radius=0.1];
   }\\

     \tikz[xscale=0.8, yscale=0.8]{
    \fill[lightgray] (-3,0)--(0,0)--(-3,3)--cycle;

    \node at (0,0) [anchor=north east] {O};
    \node at (0.5,0) [anchor=west] {$u$};
    \node at (0,3.5) [anchor=south] {$v$};

    \draw [thick, -Stealth] (-4,0)--(0.5,0);
    \draw [thick, -Stealth] (0,-1)--(0,3.5);
    \node at (-4,4) {(5)};
   
    \draw [thick] (-3,0)--(-3,3);
    \draw [thick] (-3,3)--(0,0);

    \draw [thick] (0,0)--(-3,1);

    \fill[white] (0,0) circle [radius=0.13];
    \draw (0,0) circle [radius=0.13];
    \fill[white] (-3,1) circle [radius=0.12];
    \draw (-3,1) circle [radius=0.12];
    \fill[black] (0,0) circle [radius=0.08];
    \fill[black] (-3,0) circle [radius=0.1];
   } & 

     \tikz[xscale=0.8, yscale=0.8]{
    \fill[lightgray] (-1,0)--(2,0)--(-1,3)--cycle;

    \node at (0,0) [anchor=north east] {O};
    \node at (2.5,0) [anchor=west] {$u$};
    \node at (0,3.5) [anchor=south] {$v$};
    \node at (-2,4) {(6)};

    \draw [thick, -Stealth] (-2,0)--(2.5,0);
    \draw [thick, -Stealth] (0,-1)--(0,3.5);
   
    \draw [thick] (2,0)--(-1,3);
    \draw [thick] (-1,0)--(-1,3);

    \draw [thick] (0,0)--(-1,3);

    \fill[white] (0,0) circle [radius=0.13];
    \draw (0,0) circle [radius=0.13];
    \fill[white] (-1,3) circle [radius=0.12];
    \draw (-1,3) circle [radius=0.12];
    \fill[black] (0,0) circle [radius=0.08];
    \fill[black] (2,0) circle [radius=0.1];
    \fill[black] (-1,0) circle [radius=0.1];
   }

\end{tabular}
  \caption{$\Delta(\text{white circled points}) = \Delta(\text{black circled points})$}
\end{figure}

\begin{lem}
  \label{standard by S}
  Let $f \in \Txpmf$ be a function satisfying the condition (2) in Lemma \ref{standard equiv}.
  Then there exists a function $f_1$ defined by a tropical Laurent polynomial of the standard form such that $f \sim_{\lrangle{\mathcal S}} f_1$.
\end{lem}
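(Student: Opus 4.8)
The plan is to transform $f$, through a finite chain of $\lrangle{\mathcal S}$-moves, into a function supported on the set $\mathcal T := \{(m,0) \mid m \in \mathbb Z\} \cup \{(0,-j) \mid j \in \mathbb Z_{>0}\}$ (the $x$-axis together with the negative $y$-axis), which is precisely the support shape of a standard form, and then to read off from condition (2) the sign conditions required of a standard form. Throughout I write $f = \overline{\bigoplus_{\bm u \in A} a_{\bm u}\bm x^{\bm u}}$ with $A$ finite; since $f(\mathbf 0) = 0$ we have $\max_{\bm u} a_{\bm u} = 0$ and all $a_{\bm u} \le 0$.

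First I would record a coefficient-weighted form of the reductions already available. Because $\lrangle{\mathcal S}$ is a congruence, multiplying any $\lrangle{\mathcal S}$-equivalence by a scalar $\overline a$ and then adding $\overline 0$ preserves it; combined with Lemma \ref{binom standard} this yields, for every $a \le 0$ and every $\bm u \in \mathbb Z^2$, an equivalence $\overline{0 \oplus a\bm x^{\bm u}} \sim_{\lrangle{\mathcal S}} \overline{0 \oplus \bigoplus_{\bm w \in B}a\bm x^{\bm w}}$ with $B \subset \mathcal T$ finite (the constant term $a$ produced by the scalar multiplication is absorbed by $\overline 0$ because $a \le 0$). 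This is the engine that pushes a single off-axis monomial onto $\mathcal T$, provided it is accompanied by the constant term $\overline 0$.

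The catch is that $\overline 0$ need not be a term of $f$: if $\mathbf 0 \notin \Newt(f)$ then $f$ takes negative values and the decomposition $f = \bigoplus_{\bm u}\overline{0 \oplus a_{\bm u}\bm x^{\bm u}}$ is not legitimate. So the first genuine step is to manufacture an honest constant term using only $\lrangle{\mathcal S}$-moves. Let $A_0 = \{\bm u \mid a_{\bm u} = 0\}$ be the top coefficient level. I would check that condition (2) forces $\min_{\bm u \in A_0}u_1 = \min_{\bm u \in A_0}u_2 = 0$: the starting slope of $f$ on $\rho_1$ equals $-\min\{u_1 \mid a_{\bm u} = 0\}$ and likewise on $\rho_2$, so both minima vanish and hence $(0,0) \in \Delta(A_0)$. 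Applying Lemma \ref{binom Delta} to the level $A_0$ and using compatibility of $\sim_{\lrangle{\mathcal S}}$ with $\oplus$, I replace $f$ by $f' := f_{\Delta(A_0)} \oplus (\text{the lower levels})$; since $(0,0) \in \Delta(A_0)$ we have $f' \ge \overline 0$ pointwise and $f'(\mathbf 0) = 0$, so now $f' = \overline{0 \oplus \cdots}$ genuinely carries the constant term.

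With $f'$ in hand I would finish in two moves. Writing $f' = \bigoplus_{\bm u}\overline{0 \oplus a'_{\bm u}\bm x^{\bm u}}$ (valid because $f' \ge \overline 0$ has constant term $0$ and all other coefficients $\le 0$) and applying the engine from the second paragraph to each summand, compatibility with $\oplus$ gives $f' \sim_{\lrangle{\mathcal S}} g$ with $g$ supported on $\mathcal T$. Finally, since $g|_L = f|_L$, the function $g$ again satisfies condition (2), and I read off the standard-form sign conditions: $g(\mathbf 0) = 0$ supplies the constant $0$ and forces every coefficient to be $\le 0$ (hence $c_k \le 0$ on the positive $x$-axis), while the vanishing starting slopes on $\rho_1, \rho_2$ force the coefficients of the $x^{-i}$ and $y^{-j}$ terms to be strictly negative. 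Thus $g = \overline P$ for a polynomial $P$ of standard form and $f \sim_{\lrangle{\mathcal S}} g$, giving $f_1 = g$. I expect the main obstacle to be exactly the constant-term step: the per-monomial push onto $\mathcal T$ only functions in the presence of $\overline 0$, so one must first create $\overline 0$ legitimately by applying Lemma \ref{binom Delta} to the top level, and the verification via condition (2) that $\Delta(A_0)$ contains the origin is the crux that makes this manoeuvre valid.
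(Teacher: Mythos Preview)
Your proposal is correct and follows essentially the same approach as the paper's proof: first manufacture the constant term $0$ via Lemma~\ref{binom Delta}, then push each remaining monomial onto $\mathcal T$ via Lemma~\ref{binom standard} (combined with a scalar multiplication), and finally read off the strict sign conditions from condition~(2). The only cosmetic difference is in the constant-term step: you apply Lemma~\ref{binom Delta} to the entire top coefficient level $A_0$ and check $(0,0)\in\Delta(A_0)$, whereas the paper extracts two specific coefficient-$0$ terms $x^u$ and $y^v$ (with $u,v\ge 0$) and applies Lemma~\ref{binom Delta} to $\{(u,0),(0,v)\}$; both routes produce the needed constant term and are interchangeable.
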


\begin{proof}
  Let $f = \overline P, \ P \in \Txpm$.
  We first see that we may assume that $P$ has the term $0$.
  Since $f(\mathbf 0) = 0$, the constant term of $P$ is nonpositive.
  Since $f(\mathbf 0) = 0$ and the starting slope of $f$ on $\rho_1$ is 0, $P$ has a term of the form $y^v$.
  If $v < 0$, $y^v$ takes a positive value on $\rho_2$, which contradicts to the assumption that the starting slope of $f$ on $\rho_2$ is 0.
  Hence $v \geq 0$.
  Similarly, $P$ has a term of the form $x^u$ \ ($u \geq 0$).
  Therefore, by Lemma \ref{binom Delta},
  $$\overline{P} = \overline{P \oplus x^u \oplus y^v} \sim_{\lrangle{\mathcal S}} \overline{P \oplus P_{\Delta(\{ (u,0), (0,v) \})}}.$$
  Here, in the first equality, we use the fact $x^u \oplus x^u = x^u, \ y^v \oplus y^v = y^v$.
  Since $(0,0) \in \Delta(\{ (u,0), (0,v) \})$, $P_{\Delta(\{ (u,0), (0,v) \})}$ has the term 0.
  Thus we may assume that $P$ has the term 0.

  Let $P= \bigoplus_{\bm u \in A} a_{\bm u}\bm x^{\bm u}$, where $A \subset \mathbb Z^2 \setminus \{ (0,0) \}$ is a finite subset including $\mathbf 0$ and $a_{\mathbf 0} = 0$.
  Since $f(\mathbf 0) = 0$, $a_{\bm u} \leq 0$ for any $\bm u \in A$.
  Hence, for any $\bm u \in A$, $0 \oplus a_{\bm u} \bm x^{\bm u} = 0 \oplus a_{\bm u} \odot (0 \oplus \bm x^{\bm u})$.
  This means that
  $$P = 0 \oplus \bigoplus_{\bm u \in A} a_{\bm u}(0 \oplus \bm x^{\bm u}).$$
  By using Lemma \ref{binom standard} for each $0 \oplus \bm x^{\bm u}$, we have
  $$\overline{P} \sim_{\lrangle{\mathcal S}} \overline{0 \oplus \bigoplus_{i=1}^{\infty} a_{-i}x^{-i} \oplus \bigoplus_{j=1}^{\infty} b_{-j}y^{-j} \bigoplus_{k=1}^{\infty} c_{k}x^{k}}$$
  for some $a_{-i}, b_{-j}, c_k \leq 0$ with $a_{-i} = b_{-j} = c_k = -\infty$ except for finitely many $i,j,k$.
  If $a_{-i}=0$ for some $i$, the starting slope of $\overline{a_{-i} x^{-i}}$ on $\rho_1$ is positive, which is a contradiction.
  Hence $a_{-i} < 0$.
  Similarly, $b_{-j} < 0$.
  Therefore the polynomial $0 \oplus \bigoplus_{i=1}^{\infty} a_{-i}x^{-i} \oplus \bigoplus_{j=1}^{\infty} b_{-j}y^{-j} \bigoplus_{k=1}^{\infty} c_{k}x^{k}$ is of the standard form.
\end{proof}

We now show that $\mathbf E(L) = \lrangle{\mathcal S}$.

\begin{prop}
  The congruence $\mathbf E(L)$ is generated by $\mathcal S$.
\end{prop}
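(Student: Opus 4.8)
We have already established the inclusion $\lrangle{\mathcal S} \subset \mathbf E(L)$, so the plan is to prove the reverse inclusion $\mathbf E(L) \subset \lrangle{\mathcal S}$. Given a pair $(f,g) \in \mathbf E(L)$, I first dispose of the degenerate case: if $f = -\infty$, then $g|_L = -\infty$ forces $g = -\infty$ (a function in $\Txpmf \setminus \{ -\infty \}$ is finite everywhere on $\mathbb R^2$, in particular on $|L|$), so $f = g$ and the pair lies in $\lrangle{\mathcal S}$ trivially; the case $g = -\infty$ is symmetric. Thus I may assume $f, g \in \Txpmf \setminus \{ -\infty \}$, and the goal becomes to show $f \sim_{\lrangle{\mathcal S}} g$.

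The core of the argument is to reduce both $f$ and $g$, modulo $\lrangle{\mathcal S}$, to one and the same function of standard form. Put $a = f(\mathbf 0) \in \mathbb R$ and let $u, v$ be the starting slopes of $f$ on $\rho_1, \rho_2$. Set $m = \overline{a x^u y^v}$, which is a unit of $\Txpmf$ with inverse $\overline{(-a) x^{-u} y^{-v}}$. As in the existence part of the proof of Lemma \ref{standard decom}, the function $m^{-1} \odot f$ satisfies condition (2) of Lemma \ref{standard equiv}. The key observation is that $a$ and the starting slopes $u, v$ are determined by the restriction to $|L|$ alone; since $f|_L = g|_L$, we have $g(\mathbf 0) = a$ and the starting slopes of $g$ on $\rho_1, \rho_2$ are again $u, v$, so the same unit $m$ works for $g$, and $m^{-1} \odot g$ also satisfies condition (2). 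Applying Lemma \ref{standard by S} to each, I obtain functions $f_1, g_1$ defined by tropical Laurent polynomials of the standard form with
$$m^{-1} \odot f \sim_{\lrangle{\mathcal S}} f_1, \qquad m^{-1} \odot g \sim_{\lrangle{\mathcal S}} g_1.$$

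Because $\lrangle{\mathcal S} \subset \mathbf E(L)$, equivalent functions agree on $|L|$, so
$$f_1|_L = (m^{-1} \odot f)|_L = m^{-1}|_L \odot f|_L = m^{-1}|_L \odot g|_L = (m^{-1} \odot g)|_L = g_1|_L.$$
By Corollary \ref{coincide}, two standard-form functions that agree on $L$ coincide, hence $f_1 = g_1$. Chaining the equivalences gives $m^{-1} \odot f \sim_{\lrangle{\mathcal S}} m^{-1} \odot g$, and multiplying through by the unit $m$ --- legitimate since $\lrangle{\mathcal S}$ is a congruence --- yields $f \sim_{\lrangle{\mathcal S}} g$, as desired. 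The genuinely hard work has already been carried out in the preceding lemmas (the existence of the standard form modulo $\lrangle{\mathcal S}$ in Lemma \ref{standard by S} and its uniqueness in Corollary \ref{coincide}); the only point that needs care here is to verify that the constant term and the two starting slopes are invariants of $f|_L$, so that a single monomial unit $m$ simultaneously normalizes $f$ and $g$, and that the congruence structure permits stripping off and then restoring this unit.
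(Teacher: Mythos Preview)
Your proof is correct and follows essentially the same route as the paper: normalize both $f$ and $g$ by the monomial unit determined by the value at $\mathbf 0$ and the starting slopes on $\rho_1,\rho_2$ (the paper packages this as an appeal to Lemma~\ref{standard decom}), apply Lemma~\ref{standard by S} to reach standard-form representatives, invoke Corollary~\ref{coincide} to conclude they coincide, and then cancel the unit. Your explicit treatment of the degenerate case $f=-\infty$ and your remark that $a,u,v$ depend only on $f|_L$ are small clarifications the paper leaves implicit, but the argument is otherwise the same.
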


\begin{proof}
  The inclusion $\mathbf E(L) \supset \mathcal S$ is already shown.
  Take any pair $(f, g) \in \mathbf E(L)$.
  By Lemma \ref{standard decom}, there exists a unique tuple $(a,u,v,f_0)$ consisting of $a \in \mathbb R$, \ $u, v \in \mathbb Z$, and a function $f_0 \in \Txpmf$ defined by a tropical Laurent polynomial of the standard form such that $f|_L = g|_L = \overline{ax^uy^v}f_0|_L$.
  Since $\overline{(-a)x^{-u}y^{-v}}f|_L=\overline{(-a)x^{-u}y^{-v}}g|_L = f_0|_L$,  both of the functions $\overline{(-a)x^{-u}y^{-v}}f$ and $\overline{(-a)x^{-u}y^{-v}}f$ satisfy the condition (2).  
  Hence, by Lemma \ref{standard by S}, there are functions $f_1, g_1$ defined by tropical Laurent polynomials of the standard forms such that $\overline{(-a)x^{-u}y^{-v}}f \sim_{\lrangle{\mathcal S}} f_1$ and $\overline{(-a)x^{-u}y^{-v}}g \sim_{\lrangle{\mathcal S}} g_1$.
  Since $\lrangle{\mathcal S} \subset \mathbf E(L)$, we have
  $$f_1|_L = \overline{(-a)x^{-u}y^{-v}}f|_L = \overline{(-a)x^{-u}y^{-v}}g|_L = g_1|_L.$$
  Therefore, by Corollary \ref{coincide}, $f_1 = g_1$, which shows that
  $$\overline{(-a)x^{-u}y^{-v}}f \sim_{\lrangle{\mathcal S}} f_1 = g_1 \sim_{\lrangle{\mathcal S}} \overline{(-a)x^{-u}y^{-v}}g.$$
  Thus $f \sim_{\lrangle{\mathcal S}} g$.
\end{proof}

Finally, we show the minimality of $\mathcal S$.

\begin{thm}
  The set $\mathcal S$ is a minimal generating set of $\mathbf E(L)$.
\end{thm}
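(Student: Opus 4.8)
The plan is to show that no single pair can be dropped from $\mathcal S$: for each $s_0 \in \mathcal S$ I would prove that $s_0 \notin \lrangle{\mathcal S \setminus \{ s_0 \}}$, which is exactly the assertion that $\mathcal S$ is minimal. Write $s_0 = (h_1, h_2)$ with $h_1 = \overline{0 \oplus x^{u_0} y^{v_0}}$ and $h_2 = f_{\Delta(\{ (0,0),(u_0,v_0) \})}$, where $\gcd(u_0,v_0) = 1$ and $u_0 > 0$ or $(u_0,v_0) = (0,1)$. The engine of the argument is Remark \ref{newt in newt infty}. Assuming for contradiction that $\mathbf E(L) = \lrangle{\mathcal S \setminus \{ s_0 \}}$, the pair $(h_1,h_2) \in \mathbf E(L)$ satisfies $h_1 \neq h_2$ (their Newton polytopes differ, as noted below), so there must exist a pair $(f,g) \in \mathcal S \setminus \{ s_0 \}$ such that $\Newt(f)$ or $\Newt(g)$ is included in an integer translation of $\Newt(h_1)$. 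My goal is to rule this out.

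First I would record the shapes of the relevant Newton polytopes. Since $\gcd(u_0,v_0) = 1$, the polytope $\Newt(h_1) = \conv\{ (0,0),(u_0,v_0) \}$ is a lattice segment whose only lattice points are the two endpoints $(0,0)$ and $(u_0,v_0)$; in particular it is $1$-dimensional. On the other hand, for a generator $(f,g) \in \mathcal S$ with parameter $(u,v)$ one has $\Newt(g) = \conv(\Delta(\{ (0,0),(u,v) \}))$, and I would check by a short case analysis on the signs of $u,v$ (using that $u > 0$ or $(u,v) = (0,1)$) that $\Delta(\{ (0,0),(u,v) \})$ is always $2$-dimensional. Consequently $\Newt(g)$ can never lie inside the $1$-dimensional set $\Newt(h_1)$ or any of its translates, so only $\Newt(f)$ survives as a candidate.

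It then remains to analyze when the primitive lattice segment $\Newt(f) = \conv\{ (0,0),(u,v) \}$ can be contained in an integer translation $\Newt(h_1) + \bm p$. Because $\Newt(h_1) + \bm p$ is again a primitive segment, its only lattice points are $\bm p$ and $(u_0,v_0)+\bm p$, whereas the lattice points of $\Newt(f)$ are $(0,0)$ and $(u,v)$; matching these two-element sets forces either $(u,v) = (u_0,v_0)$ or $(u,v) = -(u_0,v_0)$. The first case is precisely the excluded pair $s_0$, and I would dispose of the second by observing that the sign conventions defining $\mathcal S$ ($u>0$, together with the single exception $(0,1)$) can never be satisfied simultaneously by $(u_0,v_0)$ and $-(u_0,v_0)$, so $-(u_0,v_0)$ indexes no element of $\mathcal S$. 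This contradicts the existence of the pair $(f,g)$ furnished by Remark \ref{newt in newt infty}, and hence $s_0$ cannot be removed.

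The main obstacle I anticipate is bookkeeping rather than conceptual: verifying that $\Delta(\{ (0,0),(u,v) \})$ is genuinely $2$-dimensional in every admissible case, and carefully excluding the reversed direction $-(u_0,v_0)$ under all of the sign conventions. Everything else follows mechanically from the dimension mismatch between a segment and a two-dimensional polytope and from counting lattice points on primitive segments, once the Newton-polytope criterion of Remark \ref{newt in newt infty} is in place.
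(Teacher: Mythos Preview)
Your proposal is correct and follows essentially the same approach as the paper: both arguments assume $s_0 \in \lrangle{\mathcal S \setminus \{s_0\}}$, invoke Remark \ref{newt in newt infty} applied to the pair $(h_1,h_2)=s_0$, and then rule out the existence of a suitable $(f,g) \in \mathcal S \setminus \{s_0\}$ by analyzing which lattice polytopes can sit inside an integer translate of the primitive segment $\Newt(h_1)$. Your version is simply more explicit---separating the dimension obstruction for $\Newt(g)$ from the direction-matching obstruction for $\Newt(f)$ and spelling out why $-(u_0,v_0)$ violates the sign convention---whereas the paper compresses all of this into the single sentence ``by the definition of $\mathcal S$, such pair does not exist.''
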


\begin{proof}
  By the previous proposition, $\mathcal S$ is a generating set of $\mathbf E(L)$.
  Take any pair $\xi = \left( \overline{0 \oplus x^u y^v}, f_{\Delta(\{ (0,0), (u,v) \})} \right) \in \mathcal S$ and let $\mathcal S' = \mathcal S \setminus \{ \xi \}$.
  Assume that $\xi \in \lrangle{\mathcal S'}$.
  The Newton polytope $\Newt(\overline{0 \oplus x^u y^v})$ is the line segment $\conv(\{ (0,0), (u,v) \})$.
  Since $\gcd(u,v) = 1$, the polytopes included in $\Newt(\overline{0 \oplus x^u y^v})$ are $\{ (0,0) \}$, $\{ (u,v) \}$, and $\conv(\{ (0,0), (u,v) \})$ itself.
  Hence, by Remark \ref{newt in newt infty}, there exists a pair $(f,g) \in \mathcal S'$ such that at least one of $\Newt(f), \Newt(g)$ is $\{ (0,0) \}$, $\{ (u,v) \}$, or $\conv(\{ (0,0), (u,v) \})$.
  However, by the definition of $\mathcal S$, such pair does not exist.
  Thus it is a contradiction.
  Therefore $\xi \not \in \lrangle{\mathcal S'}$.
\end{proof}

\begin{rem}
  The existence of a minimal generating set of $\mathbf E(L)$ consisting of infinitely many pairs means that $\mathbf E(L)$ is not finitely generated.
  Namely, if $\mathbf E(L)$ is finitely generated, say $\mathbf E(L) = \lrangle{(f_1, g_1), \ldots, (f_k, g_k)}$, then $(f_i, g_i) \in \lrangle{\mathcal S}$ for each $i$.
  Thus, by Remark \ref{newt in newt infty}, there exist some pairs $\xi_{i1}, \ldots, \xi_{il_i} \in \mathcal S$ such that $(f_i, g_i) \in \lrangle{\xi_{i1}, \ldots, \xi_{il_i}}$.
  Hence
  $$\{ (f_1, g_1), \ldots, (f_k, g_k) \} \subset \lrangle{\xi_{11}, \ldots, \xi_{1l_1}, \ldots, \xi_{k1}, \ldots, \xi_{kl_k}}.$$
  This means that $\mathbf E(L) = \lrangle{\xi_{11}, \ldots, \xi_{1l_1}, \ldots, \xi_{k1}, \ldots, \xi_{kl_k}}$, which contradicts to the minimality of $\mathcal S$.
  Thus the result of this section gives another proof of Theorem \ref{main} in the case $X = L$.
\end{rem}

\setcounter{section}{0}

\renewcommand{\thesection}{\Alph{section}}

\section{Appendix}

\begin{lem}
  \label{app2}
  Let $W$ be a $d$-dimensional rational linear subspace of $\mathbb R^n$, and $\{ \bm q_1, \ldots, \bm q_d \}$ a basis of the free abelian group $W \cap \mathbb Z^n$.
  Then the basis $\{ \bm q_1, \ldots, \bm q_d \}$ can be extended to a basis $\{ \bm q_1, \ldots, \bm q_n \}$ of $\mathbb Z^n$.
\end{lem}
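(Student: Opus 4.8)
The plan is to show that $L := W \cap \mathbb Z^n$ is a direct summand of $\mathbb Z^n$; once this is established, any basis of a complementary subgroup will extend the given basis $\{\bm q_1, \ldots, \bm q_d\}$ to a basis of $\mathbb Z^n$. The standard criterion I would invoke is that a subgroup $L$ of $\mathbb Z^n$ is a direct summand if and only if the quotient $\mathbb Z^n / L$ is torsion-free (equivalently, free).

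First I would record that $\mathbb Z^n / L$ is a finitely generated abelian group, and then verify that it is torsion-free. Suppose $\bm v \in \mathbb Z^n$ satisfies $k \bm v \in L$ for some integer $k \neq 0$. Then $k \bm v \in W$, and since $W$ is a real linear subspace we obtain $\bm v = \frac{1}{k}(k \bm v) \in W$; as $\bm v \in \mathbb Z^n$ already, this forces $\bm v \in W \cap \mathbb Z^n = L$. Hence no nonzero element of $\mathbb Z^n / L$ is a torsion element. This \emph{saturation} property of $W \cap \mathbb Z^n$ is precisely where the hypothesis that $W$ is a linear subspace (rather than an arbitrary subgroup) enters.

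Next I would apply the structure theorem for finitely generated abelian groups: a finitely generated torsion-free abelian group is free, so $\mathbb Z^n / L \cong \mathbb Z^{n-d}$, where the rank $n-d$ follows from additivity of rank in the sequence $0 \to L \to \mathbb Z^n \to \mathbb Z^n/L \to 0$ together with $\operatorname{rank} L = d$. Because the quotient is free, this short exact sequence splits, since any surjection onto a free abelian group admits a section. Choosing a basis $\bar{\bm q}_{d+1}, \ldots, \bar{\bm q}_n$ of $\mathbb Z^n / L$ and lifting it through such a section yields elements $\bm q_{d+1}, \ldots, \bm q_n \in \mathbb Z^n$ with $\mathbb Z^n = L \oplus (\mathbb Z \bm q_{d+1} + \cdots + \mathbb Z \bm q_n)$.

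Finally I would conclude that $\{\bm q_1, \ldots, \bm q_d, \bm q_{d+1}, \ldots, \bm q_n\}$ is a basis of $\mathbb Z^n$: the first $d$ vectors form a basis of the summand $L$, the last $n-d$ form a basis of the chosen complement, and the union of bases of the two factors of a direct sum is a basis of the whole. The only genuinely delicate point is the torsion-free verification above; everything else is a direct appeal to standard structure theory, so I expect no real obstacle beyond bookkeeping of ranks.
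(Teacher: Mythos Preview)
Your proof is correct and follows essentially the same approach as the paper: both arguments verify that $\mathbb Z^n / (W \cap \mathbb Z^n)$ is torsion-free using the fact that $W$ is a linear subspace, invoke the structure theorem to conclude the quotient is free of rank $n-d$, and then split the resulting short exact sequence by lifting a basis of the quotient. The only differences are cosmetic phrasing.
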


\begin{proof}
  Note that $\mathbb Z^n/ (W \cap \mathbb Z^n)$ is torsion free.
  Indeed, let $\bm a \in \mathbb Z^n$ be a vector and let $m$ be a nonzero integer such that $m \bm a \in W \cap \mathbb Z^n$.
  Since $W$ is a linear subspace of $\mathbb R^n$, $\bm a \in W$, hence $\bm a \in W \cap \mathbb Z^n$.

  Thus, by the fundamental theorem of finitely generated abelian groups, $\mathbb Z^n/ (W \cap \mathbb Z^n)$ is a free abelian group.
  Consider the short exact sequence
  $$0 \longrightarrow W \cap \mathbb Z^n \longrightarrow \mathbb Z^n \longrightarrow \mathbb Z^n/ (W \cap \mathbb Z^n) \longrightarrow 0 $$
  of abelian groups.
  By computing the lengths of these groups, we verify that the rank of $\mathbb Z^n/ (W \cap \mathbb Z^n)$ is $n-d$.
  For any vector $\bm a \in \mathbb Z^n$, we denote $\overline{\bm a}$ the equivalence class of $\bm a$ in $\mathbb Z^n/ (W \cap \mathbb Z^n)$.
  Let $\bm q_{d+1}, \ldots, \bm q_n \in \mathbb Z^n$ be vectors such that $\{ \overline{\bm q_{d+1}}, \ldots, \overline{\bm q_n} \}$ forms a basis of $\mathbb Z^n/ (W \cap \mathbb Z^n)$.
  Then there is the section
  $$\varphi : \mathbb Z^n/ (W \cap \mathbb Z^n) \to \mathbb Z^n, \qquad \overline{\bm q_i} \mapsto \bm q_i \ (i=d+1, \ldots, n).$$
  This shows that $\mathbb Z^n = (W \cap \mathbb Z^n) \oplus \Im(\varphi)$, and then $\{ \bm q_1, \ldots, \bm q_n \}$ forms a basis of $\mathbb Z^n$.
\end{proof}

\begin{lem}
  \label{app1}
  Let $\bm u_1, \ldots, \bm u_k \in \mathbb R^n$ be vectors such that the set $P := \conv\{\bm u_1, \ldots, \bm u_k\}$ is an $n$-dimensional polytope and $\mathbf 0$ is an interior point of $P$.
  Then, for any $a_1, \ldots, a_k \in \mathbb R$, the set
  $$Q = \left\{ \bm q \in \mathbb R^n \ | \ \bm u_i \cdot \bm q < a_i \text{ for any } i \right\}$$
  is bounded (possibly empty).
\end{lem}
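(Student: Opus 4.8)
The plan is to reduce boundedness to a uniform positivity statement on the unit sphere. First I would introduce the continuous function $\phi : \mathbb R^n \to \mathbb R$ defined by $\phi(\bm v) = \max_{1 \leq i \leq k} \bm u_i \cdot \bm v$, and restrict attention to the unit sphere $S = \{ \bm v \in \mathbb R^n \mid \norm{\bm v} = 1 \}$, which is compact. The point of working with $\phi$ is that the defining inequalities of $Q$ control $\phi$ from above, while interiority of $\mathbf 0$ controls it from below.

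The crucial step is to show $\phi(\bm v) > 0$ for every $\bm v \in S$. I would argue by contradiction: if some unit vector $\bm v$ satisfied $\bm u_i \cdot \bm v \leq 0$ for all $i$, then all the $\bm u_i$, and hence $P = \conv\{\bm u_1, \ldots, \bm u_k\}$, would lie in the closed halfspace $\{ \bm x \in \mathbb R^n \mid \bm x \cdot \bm v \leq 0 \}$; but then $t \bm v \notin P$ for every $t > 0$, contradicting the assumption that $\mathbf 0$ is an interior point of $P$. This is the step I expect to carry the real content, since it is exactly where the interiority hypothesis is converted into a strict inequality.

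Having established positivity pointwise, I would invoke compactness of $S$ together with continuity of $\phi$ to obtain a uniform lower bound $c := \min_{\bm v \in S} \phi(\bm v) > 0$. Then for any nonzero $\bm q$, writing $\bm v = \bm q / \norm{\bm q}$ and using $\bm u_i \cdot \bm q = \norm{\bm q}\,(\bm u_i \cdot \bm v)$, one gets $\max_i \bm u_i \cdot \bm q = \norm{\bm q}\,\phi(\bm v) \geq c \norm{\bm q}$.

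Finally, for $\bm q \in Q$ set $A = \max_i a_i$; the defining inequalities give $\max_i \bm u_i \cdot \bm q < A$, so $c \norm{\bm q} < A$ and thus $\norm{\bm q} < A/c$. Hence $Q$ is contained in the ball of radius $A/c$ centered at $\mathbf 0$, which proves it is bounded. If $Q$ is empty the statement holds trivially, which accounts for the caveat in the conclusion.
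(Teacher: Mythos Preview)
Your proof is correct but proceeds along a genuinely different route from the paper's. You work globally on the unit sphere: the function $\phi(\bm v)=\max_i \bm u_i\cdot\bm v$ is shown to be strictly positive on $S$ by a halfspace argument, and then compactness of $S$ upgrades this to a uniform lower bound $c>0$, from which a single norm bound $\norm{\bm q}<A/c$ follows. The paper instead argues coordinatewise and avoids any topological input: for each $\pm\bm e_i$ it uses interiority of $\mathbf 0$ to write $t(\pm\bm e_i)$ as a nonnegative combination $\sum_j s_j\bm u_j$, and then the inequality $\bm u_j\cdot\bm q<a_j$ is summed against the $s_j$ to bound the $i$-th coordinate of $\bm q$ above and below. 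Your approach is cleaner and yields a direct radius bound, while the paper's is more elementary (pure linear/convex algebra, no compactness) and produces an explicit box $(-c_1,b_1)\times\cdots\times(-c_n,b_n)$ containing $Q$.
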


\begin{proof}
  Let $\{ \bm e_1, \ldots, \bm e_n \}$ be the standard basis of $\mathbb R^n$.
  Take any $i \in \{ 1, \ldots, n \}$.
  Since $\mathbf 0$ is an interior point of $P$, $t \bm e_i \in P$ for sufficiently small $t > 0$.
  For such $t$, we have
  $$t \bm e_i = s_1 \bm u_1 + \cdots + s_k \bm u_k$$
  for some $s_1, \ldots, s_k \geq 0$ with $(s_1, \ldots, s_k) \neq (0, \ldots, 0)$.
  Hence, for any $\bm q \in Q$,
  $$\begin{aligned}
    \bm e_i \cdot \bm q &= \frac 1t (s_1 \bm u_1 + \cdots + s_k \bm u_k) \cdot \bm q \\
    &= \frac 1t \sum_{j=1}^k s_j (\bm u_j \cdot \bm q)\\
    &< \frac 1t \sum_{j=1}^k s_j a_j =: b_i.
  \end{aligned}$$
  Note that $\bm e_i \cdot \bm q$ is the $i$-th coordinate of $\bm q$.
  Therefore the $i$-th coordinates of vectors in $Q$ are less than $b_i$.

  Since $-t \bm e_i \in P$ for sufficiently small $t > 0$, by the same argument, there exists $c_i \in \mathbb R$ such that
  $$-\bm e_i \cdot \bm q < c_i$$
  for any $\bm q \in Q$.
  Hence the $i$-th coordinates of vectors in $Q$ are greater than $-c_i$.
  Therefore
  $$Q \subset (-c_1,b_1) \times \cdots \times (-c_n, b_n),$$
  which means that $Q$ is bounded.
\end{proof}

\end{document}